\newtheorem{thm}{Theorem}[section]
\newtheorem{lem}[thm]{Lemma}
\newtheorem{cor}[thm]{Corollary}
\theoremstyle{definition} 
\newtheorem{defn}[thm]{Definition}
\newtheorem{rem}[thm]{Remark}
\newtheorem{ex}[thm]{Example}}
\newtheorem{question}[thm]{Question}
\begin{document}
%
%
%
%
%
%
%
\title[]
{Knots with infinitely many non-characterizing slopes}
\author{Tetsuya Abe and Keiji Tagami}
\subjclass[2020]{57K10}
\keywords{annulus presentation, annulus twist, characterizing slope, Dehn surgery}
\address{Department of Mathematical Sciences,  Ritsumeikan University, Kusatsu-Shiga, Japan}
\email{tabe@fc.ritsumei.ac.jp}
\address{Department of Fisheries Distribution and Management, 
National Fisheries University, Shimonoseki, Yamaguchi 759-6595 
JAPAN
}
\email{tagami@fish-u.ac.jp}
\date{\today}
\maketitle
%
\begin{abstract}
Using the techniques on annulus twists, 
we observe  that  $6_3$  has infinitely many  non-characterizing slopes,
which affirmatively answers  a question  by Baker and Motegi.
Furthermore,  
 we prove that 
the knots $6_2$, $6_3$, $7_6$, $7_7$,  $8_1$, $8_3$,  $8_4$,  $8_6$,  $8_7$,  $8_9$,
$8_{10}$, $8_{11}$, $8_{12}$, $8_{13}$, $8_{14}$,
$8_{17}$, $8_{20}$ and $8_{21}$  have infinitely many non-characterizing slopes.
We also introduce the notion of 
trivial  annulus twists
 and give  some possible applications. Finally, we completely determine which knots have special
 annulus presentations up to 8-crossings.
\end{abstract}
\section{Introduction}
The classical theorem of Lickorish \cite{Lickorish} and Wallace \cite{Wallace} states that 
every closed, connected and orientable 3-manifold is obtained by Dehn surgery on a link in the 
$3$-sphere $\mathbf{S}^3$.
It is well-known that this surgery description of the manifold is far
from unique even if one restricts to knots. For example, 
the first  author, Jong, Luecke and  Osoinach   \cite{AJLO} proved that, for any integer $n$,
there exist infinitely many different knots in $\mathbf{S}^3$ such that $n$-surgery on those knots yields the same $3$-manifold.
If one  considers only specific surgery slopes
of a given knot, then the uniqueness problem becomes  meaningful.
A possible way to formulate the problem is via the  notion of a ``characterizing slope'' as follows.
%
%
\par
Let $K$ be a knot in $\mathbf{S}^3$.
We denote by $M_{K}(p/q)$ the $3$-manifold obtained from $\mathbf{S}^3$ by $p/q$-surgery 
on $K$. 
A slope $p/q \in \mathbf{Q}$ is \textit{characterizing} for $K$ if a knot $K'$ is isotopic to $K$ whenever $M_{K'}(p/q)$ is orientation-preservingly homeomorphic to $M_{K}(p/q)$. 
Using  monopole Floer homology, 
Kronheimer, Mrowka, Ozsv\'{a}th and Szab\'{o} \cite{KMOZ}  
proved that every non-trivial slope of the unknot
is characterizing, 
which was conjectured by Gordon  \cite{Gordon} in 1978.
Subsequently, Ozsv\'{a}th and Szab\'{o} \cite{OS} proved that 
 every non-trivial slope of the trefoil  and the figure eight knots
is characterizing using  Heegaard Floer homology. 
For more results, see \cite{McCoy,McCoy2,  NX}.
Recently,   Lackenby \cite{Lackenby}
proved that  if $|p| \le |q|$ and $|q|$ is sufficiently large, 
then  $p/q$ is characterizing.
\par
On the other hand,
for integral slopes, the situation is quite different.
Indeed, Baker and Motegi \cite{BM} proved the following.
\begin{thm}[{\cite[Theorem~1.5]{BM}}]\label{thm:main}
There exists a hyperbolic knot for which every integral slope is non-characterizing.
In particular, every integral slope of $8_6$ in Rolfsen's table is non-characterizing.
\end{thm}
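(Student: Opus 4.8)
The plan is to realize $8_6$ through an annulus presentation and to exploit Osoinach-type annulus twists, the same circle of techniques developed later in this paper. First I would fix an embedded annulus $A \subset \mathbf{S}^3$ whose two boundary curves $c_1, c_2$ are unknots cobounding $A$, together with an attached band, so that the resulting knot $K = \partial(A \cup \text{band})$ is $8_6$. That $8_6$ is hyperbolic is already known, so the only genuinely new input here is the annulus presentation itself, which I would extract by manipulating a diagram from Rolfsen's table until it takes the required annulus-plus-band form.

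Next I would analyze the annulus twist $t_A$, namely the operation of performing $(+1)$- and $(-1)$-surgeries (with respect to the annulus framing) on $c_1$ and $c_2$. Since $c_1$ and $c_2$ cobound $A$, these two surgeries cancel and return $\mathbf{S}^3$, while the knot $K$ is carried to a new knot $K_n := t_A^{\,n}(K)$. The key point is that for the slope $\gamma$ induced by the annulus framing one has $M_{K_n}(\gamma) \cong M_{K}(\gamma)$ for every $n \in \Z$. The cleanest way to see this is to pass to $M := M_{K}(\gamma)$, in which $c_1 \cup c_2$ and the annulus $A$ survive and determine a self-homeomorphism $\tau$ of $M$ given by a Dehn twist along $A$. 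This $\tau$ is in general not isotopic to the identity, yet it is a homeomorphism of $M$; it moves the dual knot $K^{\ast}$ of the surgery to $\tau^{n}(K^{\ast})$, and reading the complements $M \setminus N(\tau^{n}(K^{\ast})) \cong \mathbf{S}^3 \setminus N(K)$ back into $\mathbf{S}^3$ exhibits each $K_n$ as a knot with $M_{K_n}(\gamma) \cong M$. To conclude that $\gamma$ is genuinely non-characterizing I would then verify that infinitely many of the $K_n$ are mutually non-isotopic using an invariant that detects the twisting, such as the Alexander polynomial or the hyperbolic volume.

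The heart of the argument, and the step I expect to be the main obstacle, is upgrading ``one non-characterizing slope $\gamma$'' to ``every integral slope.'' For this I would produce, for each integer $m$, a twisting configuration for $8_6$ whose induced framing slope equals $m$; equivalently, I would equip the annulus $A$ with a variable number of internal twists so that the preserved slope sweeps out all of $\Z$ while the underlying knot type remains $8_6$. Decoupling the preserved slope from the knot type is delicate, since adding twists to $A$ generically alters $K$, so one must exhibit an honest family of presentations all representing $8_6$ but realizing distinct framings. Once such a family is in hand, each $m \in \Z$ acquires an accompanying infinite family $\{K_n^{(m)}\}$ of distinct knots with $M_{K_n^{(m)}}(m) \cong M_{8_6}(m)$, which is precisely the assertion that every integral slope of $8_6$ is non-characterizing; as $8_6$ is hyperbolic, this also yields the general existence statement.
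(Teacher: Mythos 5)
First, a point of orientation: this theorem is not proved in the paper at all --- it is quoted verbatim from Baker--Motegi \cite{BM}, and the machinery the paper itself develops (the BM-condition, Theorem~\ref{thm:BM}, applied via Lemma~\ref{lem:non-trivial} and Lemma~\ref{lem:dualizable}) recovers only the weaker conclusion that $8_6$ has \emph{infinitely many} non-characterizing slopes (Theorem~\ref{thm:8}). Your first two paragraphs correctly reproduce the slope-$0$ part of this circle of ideas: $8_6$ does admit a special annulus presentation (Figure~\ref{annulus-pre}; note $u(8_6)=2$, so Lemma~\ref{lem:unknotting} does not apply and the presentation must be found by hand), and annulus twisting yields knots $A^n(8_6)$ with $M_{A^n(8_6)}(0)\cong M_{8_6}(0)$, by the Osoinach mechanism you describe.

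The genuine gap is your third step, and it rests on a misconception. The slope preserved by an annulus twist is \emph{always} $0$: Theorem~\ref{thm:Osoinach} gives homeomorphisms $M_{K}(0)\rightarrow M_{A^{n}(K)}(0)$ and nothing else, and this cannot be altered by putting internal twists into $A$ --- the surgery coefficients $lk(c_1,c_2)\pm 1/n$ in the definition of the annulus twist compensate for any twisting or knotting of $A$ precisely so that the preserved surgery on $K$ remains the $0$-surgery. Hence the family of presentations you ask for, ``all representing $8_6$ but realizing distinct framings sweeping out $\mathbf{Z}$,'' does not exist, and the heart of the theorem cannot be reached this way. The correct upgrade, used both in \cite{BM} and in Sections~\ref{sec:main}--\ref{sec:condition} of this paper, keeps a \emph{single} fixed configuration and instead varies the companion knot with the slope: one fixes an unknot $c$ disjoint from $K$, not a meridian, with $M_{K\cup c}(0,0)\cong \mathbf{S}^3$ (for $8_6$ this is the curve $\beta_{8_6}$ of Lemma~\ref{lem:non-trivial}), and then for each integer $m$ one obtains a \emph{different} knot $K_m$ by $m$-fold twisting along the relevant dual curve, together with a homeomorphism $M_{K_m}(m)\cong M_{K}(m)$ carrying the \emph{same} slope $m$ on both sides (this is the operation $(\ast m)$ of Theorem~\ref{thm:AJLO1}, and the twist-family mechanism behind Theorem~\ref{thm:BM}). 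Finally, even granting that mechanism, the statement ``every integral slope is non-characterizing'' requires $K_m\neq 8_6$ for \emph{every} $m$, not merely infinitely many; Baker--Motegi verify exactly this for their explicit curve, whereas invariant arguments of the kind you invoke (Alexander polynomial, as in the proof of Theorem~\ref{thm:6_3}) are only shown to succeed for infinitely many $m$, and your proposal instead addresses the different task of distinguishing the knots $K_n$ at a fixed slope from one another.
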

\par
Baker and Motegi asked the following.
\par
\begin{question}$($\cite[Question 1.7]{BM}$)$ Are there any knots of crossing number less than 8 that have infinitely
many non-characterizing slopes?
\end{question}
\par
Using the techniques on annulus twists developed by
the first  author, Jong, Luecke and Osoinach  \cite{AJLO}, 
we affirmatively answer this question as follows.
\begin{thm}\label{thm:6_3}
The knot  $6_3$  has infinitely many  non-characterizing slopes.
\end{thm}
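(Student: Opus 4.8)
The plan is to realize $6_3$ via a suitable annulus presentation and then exploit annulus twists to manufacture, for infinitely many slopes, a companion knot with the same surgery. First I would exhibit an explicit annulus presentation of $6_3$: write $6_3$ as the union of an arc running along an annulus $A \subset \mathbf{S}^3$ together with a band $b$ attached to $A$, and verify by a sequence of Reidemeister moves that the underlying knot is indeed $6_3$ (and that the presentation is \emph{special} in the sense needed to run the machinery of \cite{AJLO}). This diagrammatic identification is routine but must be done carefully, since the whole argument rests on having the correct knot on the correct annulus.

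Next, recall that the annulus twist $\tau_A$ is realized by performing $(\pm 1)$-surgeries on the two boundary curves $c_1 \cup c_2 = \partial A$; since $c_1$ and $c_2$ cobound $A$ they are parallel unknots, so these surgeries return $\mathbf{S}^3$ and carry $6_3$ to a new knot. Iterating produces a family $\{K_n\}_{n \in \Z}$ with $K_0 = 6_3$, and the twist induces a homeomorphism of knot exteriors $\mathbf{S}^3 \setminus 6_3 \cong \mathbf{S}^3 \setminus K_n$ fixing the Seifert longitude, whence $M_{6_3}(0) \cong M_{K_n}(0)$. The Osoinach-type argument by itself only yields the single slope $0$, so the central point is to \emph{spread the coincidence across infinitely many slopes}. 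Following the mechanism behind Theorem~\ref{thm:main} \cite{BM}, I would place the annulus presentation inside a standardly embedded solid torus and combine the annulus twist with the addition of full twists along a disk; tracking how these twists act on framings produces, for each slope $\gamma$ in an infinite arithmetic family $S$, a knot $K'_\gamma$ (built from the $K_n$) with $M_{K'_\gamma}(\gamma) \cong M_{6_3}(\gamma)$.

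Finally I must ensure these companion knots are genuinely different from $6_3$; this is where the notion of a \emph{non-trivial} annulus twist is essential, since a trivial twist would give $K'_\gamma \simeq 6_3$ and prove nothing. I would detect distinctness with a classical isotopy invariant --- for instance by computing the Alexander polynomials $\Delta_{K'_\gamma}(t)$ (or the genera) and showing that they vary without bound as $\gamma$ ranges over $S$ --- so that at most finitely many $K'_\gamma$ coincide with $6_3$. Discarding those, each remaining slope $\gamma \in S$ admits a knot $K'_\gamma \not\simeq 6_3$ with $M_{K'_\gamma}(\gamma) \cong M_{6_3}(\gamma)$, and hence $\gamma$ is non-characterizing; as $S$ is infinite, $6_3$ has infinitely many non-characterizing slopes. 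I expect the main obstacle to be the combination of the last two steps: simultaneously arranging the surgery coincidence for a whole infinite family of slopes and certifying that the resulting twists are non-trivial, since controlling an invariant of $K'_\gamma$ uniformly in $\gamma$ is exactly the delicate part.
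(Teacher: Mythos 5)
Your justification of the crucial $0$-surgery coincidence contains a genuine error. You claim the annulus twist ``induces a homeomorphism of knot exteriors $\mathbf{S}^3\setminus 6_3\cong \mathbf{S}^3\setminus K_n$ fixing the Seifert longitude, whence $M_{6_3}(0)\cong M_{K_n}(0)$.'' This cannot be the mechanism. The annulus twist is surgery on the curves $c'_1,c'_2$, which lie in the complement of the knot, so it \emph{changes} the exterior: the exterior of $K_n$ is the \emph{surgered} exterior of $6_3$, and in general the two are not homeomorphic. Indeed they had better not be, since by the Knot Complement Theorem of Gordon and Luecke \cite{Gordon-Luecke} a homeomorphism of exteriors would force $K_n=6_3$ (as $6_3$ is amphichiral, even an orientation-reversing one would), and then slope $0$ could never be exhibited as non-characterizing by this family. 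Your inference --- ``the twisting circles are disjoint from the knot, hence the exteriors agree'' --- would prove that twisting along \emph{any} annulus disjoint from \emph{any} knot preserves the knot type, which is absurd; it is also inconsistent with your own (correct) observation that the Osoinach argument yields only the single slope $0$, since homeomorphic exteriors respecting the longitude would give every slope at once. What is actually true is Theorem~\ref{thm:Osoinach}: the homeomorphism $\phi_n\colon M_{6_3}(0)\to M_{A^n(6_3)}(0)$ exists only \emph{after} the $0$-framed filling, its construction uses the band $b$ of the annulus presentation in an essential way (an annulus disjoint from the knot is not enough), and it does \emph{not} carry meridian to meridian --- that failure is exactly what leaves room for the surgered knots to differ. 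So this step must be replaced by an appeal to (or a proof of) Osoinach's theorem; it cannot be deduced from knot exteriors.

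With that repair, your outline coincides with the paper's proof. The ``spreading across slopes'' you sketch is precisely the operation $(\ast m)$ of \cite{AJLO}: one twists $A(6_3)$ a total of $m$ times along the unknot $\gamma_{A(6_3)}$ to obtain $T_m(A(6_3))$, and Theorem~\ref{thm:AJLO1} gives $M_{6_3}(m)\cong M_{T_m(A(6_3))}(m)$ for every integer $m$; the distinctness step is \cite[Lemma~3.12]{AJLO}, which gives $\Delta_{6_3}(t)\neq \Delta_{T_m(A(6_3))}(t)$ for every positive integer $m$. Note that you only need this inequality slope by slope, not that the Alexander polynomials ``vary without bound,'' so the uniform control you worry about at the end is not actually required. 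Two smaller inaccuracies to fix: for a \emph{special} annulus presentation the boundary curves $c_1,c_2$ form a Hopf link, not a split pair of parallel unknots; and the surgeries returning $\mathbf{S}^3$ have coefficients $\operatorname{lk}(c_1,c_2)\pm 1/n$ in the Seifert framings (for the presentation of $6_3$ and $n=1$ these are $2$ and $0$, as in Figure~\ref{figure:annulus-twist}), which equal $\pm 1/n$ only with respect to the annulus framing.
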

\par
One may consider that 
knots with infinitely many non-characterizing slopes are  sporadic.
In this paper,  we  prove the  following theorem, which suggests that 
such knots  are more common.
\begin{thm}\label{thm:8}
The following knots have infinitely many non-characterizing slopes:
\[ 
6_2,\ 6_3,\ 7_6,\ 7_7,\  8_1,\ 8_3,\  8_4,\ 8_6,\  8_7,\  8_9,\ 
8_{10},\ 8_{11},\ 8_{12},\ 8_{13},\ 8_{14},\ 8_{17},\ 8_{20},\ 8_{21}.
\]
\end{thm}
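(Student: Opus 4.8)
The plan is to prove Theorem~\ref{thm:8} by showing that each listed knot admits a suitable \emph{annulus presentation}, and then to apply the machinery of annulus twists from \cite{AJLO} uniformly across all of these examples. Recall that the strategy already used to establish Theorem~\ref{thm:6_3} is the template: if a knot $K$ sits on an annulus $A$ in a controlled way (a ``special'' annulus presentation), then twisting along $A$ produces a family of knots $K_n$, indexed by the number $n$ of twists, together with an explicit slope $\gamma_n$ such that $M_{K_n}(\gamma_n)$ is orientation-preservingly homeomorphic to $M_{K}(\gamma)$ for a fixed slope $\gamma$. The key point is that, for infinitely many $n$, the knots $K_n$ are mutually non-isotopic while yielding the same surgered manifold, so the corresponding slope is non-characterizing; and since the twisting can be arranged to realize infinitely many distinct slopes, each such $K$ inherits infinitely many non-characterizing slopes.

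First I would verify, knot by knot, that each of $6_2,\ 6_3,\ 7_6,\ 7_7,\ 8_1,\ 8_3,\ 8_4,\ 8_6,\ 8_7,\ 8_9,\ 8_{10},\ 8_{11},\ 8_{12},\ 8_{13},\ 8_{14},\ 8_{17},\ 8_{20},\ 8_{21}$ has a special annulus presentation; this is essentially a tabulation exercise, matching each knot in Rolfsen's table against an explicit diagram on an annulus, and it dovetails with the stated goal of determining exactly which knots up to $8$ crossings admit such presentations. Next, for a knot $K$ equipped with a special annulus presentation, I would carry out the annulus twist to generate the family $\{K_n\}$ and track the induced surgery slope, using the framing data of the annulus to compute how the slope transforms under $n$ twists. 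The homeomorphism $M_{K_n}(\gamma_n)\cong M_{K}(\gamma)$ then follows from the general principle that an annulus twist is realized by a re-gluing along a torus that does not change the ambient manifold after the prescribed surgery.

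The genuinely delicate step, and the main obstacle, is establishing that the family $\{K_n\}$ contains infinitely many distinct knots rather than eventually repeating. This requires an invariant that distinguishes the $K_n$: I would aim to bound or compute a suitable invariant (for instance the genus, the Alexander polynomial, or a hyperbolic-volume/Gromov-norm estimate) that grows with $|n|$ or at least takes infinitely many values along the family, thereby ruling out isotopy among all but finitely many members. For the knots that become hyperbolic after enough twisting, a volume or geometric argument is the cleanest; for others one may need the behavior of the Alexander polynomial under annulus twists, exploiting that the twisting changes the polynomial in a predictable way. Handling this distinctness uniformly—so that a single argument covers all eighteen knots rather than requiring ad hoc case analysis—is where the real work lies; the construction of the surgery coincidence is, by contrast, a direct consequence of the annulus-twist technology already in place.

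Finally, I would assemble these pieces: for each $K$ on the list, the annulus presentation yields a twisting family, the twisting family yields infinitely many non-characterizing slopes via the surgery homeomorphism, and the distinctness argument guarantees the slopes are genuinely non-characterizing (witnessed by infinitely many non-isotopic knots sharing the surgered manifold). Since $6_3$ is already covered by Theorem~\ref{thm:6_3}, the new content is the remaining seventeen knots, each treated by the same blueprint once its special annulus presentation is exhibited.
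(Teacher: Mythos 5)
Your plan starts the same way the paper does (its Lemma~\ref{lem:annulus-pre} exhibits special annulus presentations for all eighteen knots), but after that it diverges, and the divergence contains a genuine gap. First, a bookkeeping issue: a homeomorphism $M_{K_n}(\gamma_n)\cong M_{K}(\gamma)$ with $\gamma_n\neq\gamma$ proves nothing about characterizing slopes, because the definition requires the \emph{same} slope on both sides; pure annulus twisting (Theorem~\ref{thm:Osoinach}) produces only the single slope $0$, and what yields infinitely many slopes is the operation $(\ast m)$ of Theorem~\ref{thm:AJLO1}, where for each integer $m$ the witness is $T_{m}(A(K))$ and the non-isotopy one must establish is $T_{m}(A(K))\neq K$ (not mutual non-isotopy of a family). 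Second, and more seriously, that non-isotopy is exactly the step you leave open: you call it ``where the real work lies'' and gesture at the Alexander polynomial, genus, or hyperbolic volume, but give no argument. The Alexander polynomial comparison \cite[Lemma~3.12]{AJLO} is invoked in the paper only for $6_3$ (in the proof of Theorem~\ref{thm:6_3}); no uniform statement covering the other seventeen knots is established, and a volume argument would require hyperbolicity checks of the relevant link complements knot by knot --- precisely the ad hoc case analysis you hoped to avoid. As written, the proposal is a program rather than a proof.

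The paper's actual proof of Theorem~\ref{thm:8} sidesteps the distinctness problem entirely by quoting Baker--Motegi's theorem (Theorem~\ref{thm:BM}): any knot satisfying the BM-condition has infinitely many non-characterizing slopes. It therefore suffices to produce, for each knot $K$ on the list, an unknot $c$ disjoint from $K$, not a meridian of $K$, such that $(0,0)$-surgery on $K\cup c$ yields $\mathbf{S}^3$. Lemma~\ref{lem:non-trivial} manufactures such a curve $\beta_{K}$ from any special annulus presentation (the $(0,0)$-surgery claim follows from the Osoinach--Teragaito homeomorphism), and Lemma~\ref{lem:dualizable} verifies that $\beta_{K}$ is not a meridian by a finite computation for each knot, namely
\[
\nabla_{K\cup \beta_{K}}(z)\neq \nabla_{K\cup \alpha_{K}}(z),
\]
where $\alpha_{K}$ is a meridian of $K$. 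All of the ``infinitely many knots, infinitely many slopes'' content is packaged inside \cite[Theorem~1.3]{BM}. To repair your argument you would either need to prove a uniform analogue of \cite[Lemma~3.12]{AJLO} valid for every knot with a special annulus presentation, or replace your delicate step by this reduction to the BM-condition, which is what the paper does.
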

\par
We also propose a possible application to construct some interesting knots in a 3-manifold
as follows: 
In  knot theory, one of the basic questions is whether 
equivalent knots in a given $3$-manifold are isotopic or not.
Here, two knots $K_1$ and $K_2$ in an oriented $3$-manifold $M$ are \textit{equivalent} if there is an orientation-preserving homeomorphism $f\colon (M,K_1)\rightarrow (M,K_2)$.
It is well known that 
equivalent knots in $\mathbf{S}^3$ are isotopic.
On the other hand,  there exist 
equivalent knots in a 3-manifold which 
are not isotopic.
For more details, see \cite{CM}.
By considering a triviality of annulus twists, we 
 construct candidates of   such knots,  see Section~\ref{subsec:equivalent-knots}.
 Finally, we give a complete list of prime knots with special annulus presentations up to 8-crossings.


The rest of this paper is organized as follows:
In Section~\ref{sec:annulus}, 
we first recall the definition of (special) annulus presentations  of knots.
Next, we define the $n$-fold annulus twist along an annulus presentation, 
which is used to construct knots in $\mathbf{S}^3$ such that $0$-surgery on the knots yields
the same $3$-manifold.
In Section~\ref{sec:main}, we recall the definition of the operation $(\ast m)$,
which is used to construct knots in $\mathbf{S}^3$ such that $m$-surgery on the knots yields
the same $3$-manifold.
Using the operation $(\ast m)$, we prove  Theorem~\ref{thm:6_3}. 
In Section~\ref{sec:condition}, 
we recall  a simple sufficient condition for a given knot 
to have infinitely many non-characterizing slopes given by Baker and Motegi
 (Theorem~\ref{thm:BM}).
Using this sufficient condition, 
we prove  the main theorem (Theorem~\ref{thm:8}) in Section~\ref{sec:main-thm2}.
In Section~\ref{sec:trivial}, we introduce the notion of 
trivial  annulus twists and  investigate a sufficient condition for an annulus twist to be trivial.
As a byproduct, we  obtain a new method to construct 
equivalent knots in a 3-manifold which might be non-isotopic.
In Section~\ref{sec:table},
 we  tabulate  special annulus presentations of prime knots up to 8-crossings.
 We also introduce the notion of 
equivalent annulus presentations  and study its basic property.

%
\subsection{Notations}\label{notations}
Throughout this paper, 
\begin{itemize}
\item unless specifically mentioned, all knots and links are smooth and unoriented, and all other manifolds are smooth and oriented, 
\item for a $2$-component link $L_{1}\cup L_{2}$, we denote the $3$-manifold obtained from $\mathbf{S}^3$ by $m_1$-surgery on a knot $L_1$ and 
$m_2$-surgery on a knot $L_2$ by $M_{L_{1}\cup L_{2}}(m_1,m_2)$,
\item we denote the unknot in $\mathbf{S}^3$ by $U$, 
\item we denote a tubular neighborhood of a knot $K$ in a $3$-manifold by $\nu(K)$, 
\item we will use $\cong $ to denote orientation-preservingly diffeomorphic $4$-manifolds or homeomorphic $3$-manifolds. 
\end{itemize}
\section{Annulus twist along an annulus presentation}\label{sec:annulus}
In this section, 
we first recall the definition of annulus presentations  of knots, see  \cite{AJOT}, \cite[Section~5.3]{Abe-Tagami} and \cite{tagami-pre1}.
The important point is that, 
for a given annulus presentation  of a knot $K$,
we can find an embedded annulus $A'$ with a good property, 
which often intersects with $K$.
Next, using the embedded annulus $A'$, 
we define the $n$-fold annulus twist along an annulus presentation,
which is used to construct a new knot from $K$.
The goal of this section is to understand (the statement of) Theorem~\ref{thm:Osoinach}.
\subsection{Annulus presentation}
Fix a knot $K\subset \mathbf{S}^3$. 
Let $A\subset  \mathbf{S}^3$ be an embedded annulus, and take an embedding of a band $b\colon I\times I\rightarrow  \mathbf{S}^3$ such that 
\begin{itemize}
\item $b(I\times I)\cap \partial A=b(\partial I\times I)$, 
\item $b(I\times I)\cap \operatorname{Int} A$ consists of ribbon singularities, and 
\item $A\cup b(I\times I)$ is an immersion of an orientable surface, 
\end{itemize}
where $I$ is the unit interval $[0, 1]$, see  Figure 1(center).
We call the pair $(A, b)$ an {\it annulus presentation} of $K$ if the knot $K$ is isotopic to  $(\partial A\setminus b(\partial I\times I))\cup b(I\times \partial I)$, see Figure 1(left and center).
\par
In this paper, 
we mainly consider 
a particular type of  annulus presentation as follows:
An annulus presentation $(A,b)$ is {\it special} if the embedded  annulus $A$ is  a Hopf band.
A special annulus presentation $(A,b)$ is {\it positive} if 
$A$ is the positive Hopf band, and  {\it negative} if $A$ is the negative Hopf band.
It is easily seen that a knot has a positive (resp.~negative) special annulus presentation if and only if $K$ is obtained from the positive (resp.~negative) Hopf link by a single coherent band surgery
after giving some orientation to $K$. 
\begin{ex}\label{ex:6_3}
The knot $6_{3}$ has a negative  special annulus presentation $(A, b)$ 
as in Figure~\ref{figure:annulus-pre}. 
If we set $\partial A=c_{1}\cup c_{2}$
as in Figure~\ref{figure:annulus-pre} and 
give a parallel orientation to $c_{1}\cup c_{2}$, 
then  lk$(c_1,c_2)=1$, 
where lk is the linking number.
Note that $A$ is the negative Hopf band, however,  we have lk$(c_1,c_2)=1$.
\end{ex}
\begin{figure}[h!]
\centering \includegraphics[scale=0.75]{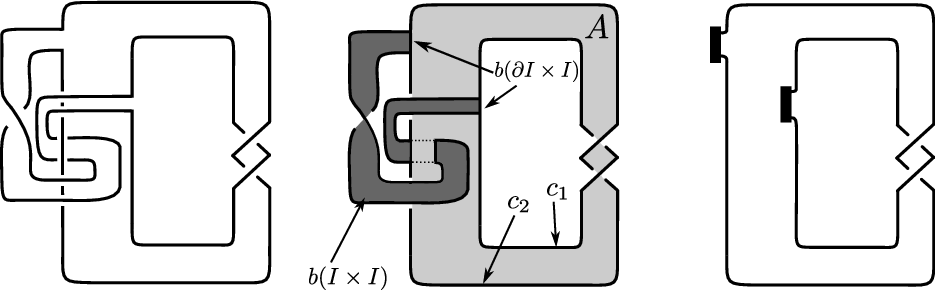}
\caption{The knot $6_3$ (left). 
A negative special annulus presentation $(A, b)$ of $6_3$ (center). 
For simplicity, 
we often draw the attaching regions for the band by bold arcs and the bands are omitted (right). }
\label{figure:annulus-pre}
\end{figure}

\begin{rem}
A knot $K$ has an annulus presentation if and only if 
the mirror image $\overline{K}$  has an annulus presentation.
This fact is implicitly  used in Section~\ref{sec:table}.
\end{rem}
\subsection{Annulus twist along an annulus presentation}\label{sub:Annulus twist}
Let $K$ be a knot with an annulus presentation $(A, b)$,
which may not be special. 
We order the two components of the boundary $\partial A$, and set $\partial A=c_{1}\cup c_{2}$
with a parallel orientation.
Then we can find a ``shrunken'' annulus $A' \subset A$ with
(ordered)  boundary $\partial A'=c'_{1}\cup c'_{2}$ which satisfies the following: 
\begin{itemize}
\item The closure of ${A\setminus A'}$, denoted by $\overline{A\setminus A'}$,  is a disjoint union of two annuli, 
\item each $c'_{i}$ ($i=1,2$) is isotopic to $c_{i}$ in 
$\overline{A\setminus A'}$,  and 
\item $A\setminus (\partial A\cup A')$ does not intersect $b(I\times I)$. 
\end{itemize}
The left picture in Figure~\ref{figure:annulus-twist} will help us to understand the definition of $A'$.
Using the embedded annulus $A'$,  we define as follows:
\begin{defn}
Let $n$ be an  integer. 
{\it The  $n$-fold annulus twist along $(A,b)$}
is to  apply $(lk(c_1,c_2)+1/n)$-surgery on $c'_{1}$ and $(lk(c_1,c_2)-1/n)$-surgery on $c'_{2}$.
\end{defn}
Note that the surgered 3-manifold
$M_{c'_{1}\cup c'_{2}}(lk(c_1,c_2)+1/n, lk(c_1,c_2)-1/n)$ is homeomorphic to $\mathbf{S}^3$,
 see \cite[Theorem~2.1]{Osoinach}.
Since $(c'_1 \cup c'_2) \cap K = \emptyset$,  for a given integer $n$, 
we obtain a new knot $A^{n}(K)$ from  $K$ 
by the $n$-fold annulus twist along $(A,b)$.
We call $A^{n}(K)$  the knot obtained from $K$ by {\it the $n$-fold annulus twist along $(A,b)$}.
The knot $A^{1}(K)$  is called the knot obtained from $K$ by the annulus twist along $(A,b)$ and 
we denote it by $A(K)$. 


%

%
\begin{ex}
Let $(A, b)$ be the special annulus presentation of $6_3$ given in Figure~\ref{figure:annulus-pre}.
Then  $A(6_{3})$ is the knot in Figure~\ref{figure:annulus-twist} (center and right).
\end{ex}
\begin{figure}[h]
\centering
\includegraphics[scale=0.85]{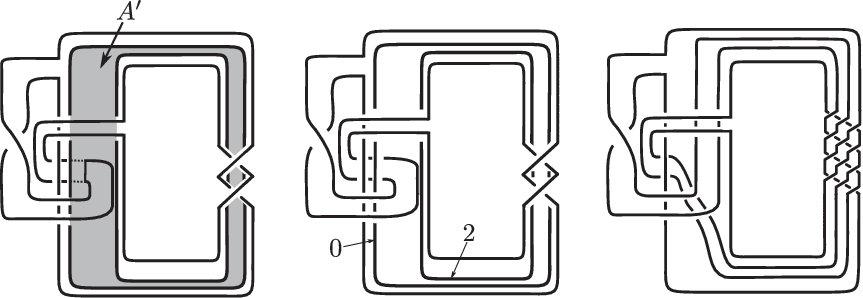}
\caption{A shrunken annulus $A'$ (left),  
 $A(6_{3})$ in the surgered 3-manifold $M_{c'_{1}\cup c'_{2}}(2,0)$ (center) and
 $A(6_{3})$ in $\mathbf{S}^3$ (right) 
}\label{figure:annulus-twist}
\end{figure}
The following is essentially due to Osoinach~\cite[Theorem~2.3]{Osoinach} (see also \cite{Teragaito}). 
\begin{thm}\label{thm:Osoinach} 
Let $K\subset \mathbf{S}^3$ be a knot with an annulus presentation $(A, b)$, 
which may not be special. 
Then, there is an  orientation-preservingly homeomorphism $\phi_{n}\colon M_{K}(0) \rightarrow M_{A^{n}(K)}(0)$ for any $n\in\mathbf{Z}$. 
In particular, $\phi_{n}$ is given as in Figure~\ref{figure:Osoinach-homeo}. 
\end{thm}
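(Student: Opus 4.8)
The plan is to view both $M_K(0)$ and $M_{A^n(K)}(0)$ as fillings of the exterior of the three-component link $K\cup c'_1\cup c'_2$, and to extract $\phi_n$ from the interplay between the $0$-filling on $K$ and the annulus-twist surgery on $c'_1\cup c'_2$. Write $l=\operatorname{lk}(c_1,c_2)$ and set $W:=M_{K\cup c'_1\cup c'_2}(0,\,l+1/n,\,l-1/n)$. Performing the surgery on $c'_1\cup c'_2$ first, the identification $M_{c'_1\cup c'_2}(l+1/n,l-1/n)\cong\mathbf{S}^3$ of \cite[Theorem~2.1]{Osoinach} carries $K$ to $A^n(K)$ by the definition of the $n$-fold annulus twist, so $W$ is the result of one further surgery on $A^n(K)\subset\mathbf{S}^3$. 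I first check that this further surgery is again along the $0$-slope, so that $W\cong M_{A^n(K)}(0)$. The identification restricts to an orientation-preserving homeomorphism from the exterior of $K$ after surgering $c'_1\cup c'_2$ onto the exterior of $A^n(K)$ in $\mathbf{S}^3$; such a map sends null-homologous slope to null-homologous slope, so it is enough to know that the original filling slope $0$ on $\partial\nu(K)$ is still null-homologous after the $c'$-surgery. At the level of $H_1$ this is equivalent to the linking balance $\operatorname{lk}(K,c'_1)=\operatorname{lk}(K,c'_2)$ (that is, $K\cdot A'=0$), which the annulus presentation supplies and which the symmetric coefficients $l\pm 1/n$ are designed to exploit.

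This reduces the theorem to the essential assertion $W\cong M_K(0)$; equivalently, the annulus-twist surgery on $c'_1\cup c'_2$, carried out inside $M_K(0)$, gives back $M_K(0)$, even though the identical surgery carried out in $\mathbf{S}^3$ changes the knot type from $K$ to $A^n(K)$. This is the substance of Osoinach's argument \cite[Theorem~2.3]{Osoinach}, and it is here that the coefficient $0$ is indispensable. I would realize it by the explicit homeomorphism depicted in Figure~\ref{figure:Osoinach-homeo}: the prescribed surgery on $c'_1\cup c'_2$ is an $n$-fold twist supported in a neighborhood of $A'$, and the arcs in which $K$ meets $\operatorname{Int}A'$ are dragged through $A'$ to produce $A^n(K)$. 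After $0$-surgery on $K$ those arcs are replaced by the $0$-framed solid torus, and the key geometric point is that, precisely because the framing is $0$, the same twisting now extends across $A'$ as a self-homeomorphism $\phi_n$ of $M_K(0)$; composing with the homeomorphism of the first paragraph then gives the desired orientation-preserving $\phi_n\colon M_K(0)\to M_{A^n(K)}(0)$ for every $n\in\mathbf{Z}$.

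The hardest part is this last step: making rigorous that the $(l\pm 1/n)$-surgery on the pair $c'_1\cup c'_2$ really is an $n$-fold twist along the annulus, and that after the $0$-surgery on $K$ this twist extends across the replaced arcs rather than obstructing. I expect to argue this by an explicit handle/Kirby picture of $M_{K\cup c'_1\cup c'_2}(0,l+1/n,l-1/n)$, using that $c'_1$ and $c'_2$ are unknots cobounding $A'$ to perform Rolfsen twists that eliminate them while tracking the framing on $K$. Secondary care is needed to run the argument for a general, possibly non-special, annulus presentation and for arbitrary $l=\operatorname{lk}(c_1,c_2)$, and to keep every identification orientation-preserving; once the twist-equals-surgery identification is in hand over the $0$-framed solid torus, the rest is bookkeeping and an appeal to \cite[Theorem~2.1, Theorem~2.3]{Osoinach}.
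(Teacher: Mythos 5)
You should first note that the paper does not actually prove this theorem: it is stated as ``essentially due to Osoinach \cite[Theorem~2.3]{Osoinach} (see also \cite{Teragaito})'', with Figure~\ref{figure:Osoinach-homeo} recording Teragaito's surgery description of $\phi_n$. So the comparison is really against the cited argument. Your first two steps are a correct and cleanly organized version of the standard framework: setting $W=M_{K\cup c'_1\cup c'_2}(0,\,l+1/n,\,l-1/n)$, filling $c'_1\cup c'_2$ first, and checking that the $0$-slope on $\partial\nu(K)$ remains null-homologous, hence is carried to the $0$-slope of $A^n(K)$, is sound. The linking balance $\operatorname{lk}(K,c'_1)=\operatorname{lk}(K,c'_2)$ that you invoke is true, but deserves its one-line proof rather than an assertion: at each ribbon singularity the band crosses $A'$ coherently while $K$ traverses the two sides of the band in opposite directions, so the two intersection points have opposite signs and $K\cdot A'=0$.

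The genuine gap is your step 3, the claim $W\cong M_K(0)$, and the tools you propose cannot fill it. First, nothing in your justification actually uses the coefficient $0$: the annulus $A'$, the curves $c'_i$, the linking balance, and ``the arcs are replaced by the surgery solid torus'' are all equally available for $m$-surgery with any integer $m$, so your argument, if it worked, would prove $M_K(m)\cong M_{A^n(K)}(m)$ for every $m$. That conclusion is false in general; its failure is precisely why the operation $(\ast m)$ with the auxiliary curve $\gamma_{A(K)}$ is needed in Section~\ref{sec:main} (Theorem~\ref{thm:AJLO1}), and why the BM-condition demands $(0,0)$-surgery. Relatedly, the natural way to make ``the twist extends across $A'$'' precise, namely completing the punctured $A'$ to a surface in $M_K(0)$ by tubing pairs of punctures through the surgery solid torus, does not rescue the argument: the tubed surface has positive genus, and twists along its two boundary curves no longer cancel in its mapping class group, so the annulus-twist triviality lemma does not apply to it. Second, your concrete fallback plan, eliminating $c'_1$ and $c'_2$ by Rolfsen twists while tracking the framing of $K$, is exactly the computation that converts $K$ into $A^n(K)$ with framing $0$; that is, it re-derives step 2, $W\cong M_{A^n(K)}(0)$, and can never produce $W\cong M_K(0)$ (moreover, for a non-special presentation $c'_1,c'_2$ need not be unknots, so Rolfsen twists need not even be available). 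What the cited proofs actually use at this point, and what your proposal never touches, is the band structure of the annulus presentation: $c'_i$ runs parallel, across $\overline{A\setminus A'}$, to the arc of $K$ lying on $c_i$, so $c'_1$ and $c'_2$ can be unhooked by sliding them over the $0$-framed $K$; it is exactly these slides where both the slope $0$ and the hypothesis that $K$ has an annulus presentation (rather than merely meeting some annulus with zero algebraic intersection) enter. Finally, appealing to \cite[Theorem~2.3]{Osoinach} for step 3 is circular in a proof attempt, since that theorem is the statement being proved; an outright citation, as the paper gives, would be legitimate, but then your steps 1--2 are scaffolding around the citation rather than a proof.
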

\begin{figure}[h!]
\centering
\includegraphics[scale=0.8]{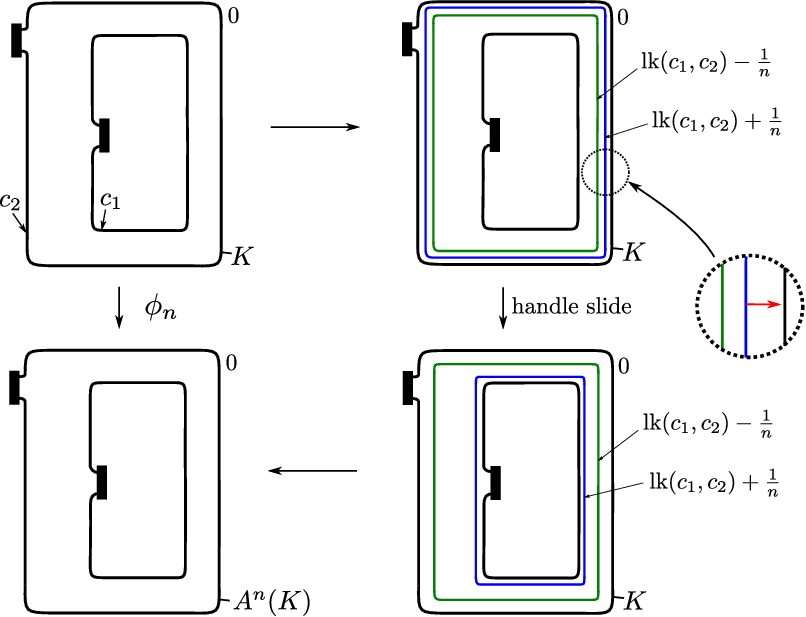}
\caption{
(color online) The definition of the $n$-th Osoinach-Teragaito's homeomorphism $\phi_{n}$ }
\label{figure:Osoinach-homeo}
\end{figure}
We call $\phi_{n}$ the $n$-th Osoinach-Teragaito's homeomorphism 
since Osoinach \cite{Osoinach} introduced the homeomorphism $\phi_{n}$    and
Teragaito \cite{Teragaito} gave  a surgery description of $\phi_{n}$.
We  simply 
call $\phi_{1}$ Osoinach-Teragaito's homeomorphism  and 
denote it by $\phi$. 
\begin{ex}\label{ex:6_3con}
We have $M_{6_3}(0)  \cong  M_{A^{n}(6_3)}(0)$ for any integer $n$ by Theorem~\ref{thm:Osoinach}.
\end{ex}
%
%
\section{Proof of Theorem~\ref{thm:6_3}}\label{sec:main}
The first  author, Jong, Luecke and Osoinach \cite[Section~3.1.2]{AJLO} introduced the operation $(\ast m)$ to construct infinitely many distinct knots with the same $m$-surgery. 
In this section, we recall the definition of the operation $(\ast m)$ and 
highlight a property of the operation $(\ast m)$.
As an  application, we prove Theorem~\ref{thm:6_3},
which states that  $6_3$  has infinitely many  non-characterizing slopes.

Let $K$ be a knot with a special annulus presentation $(A,b)$.
We define  the operation $(\ast m)$  as follows:
Let $A(K)$ be the knot obtained from $K$ by the annulus twist along $(A,b)$, and
$\gamma_{A(K)}$  a curve in $\mathbf{S}^3\setminus \nu(A(K))$ depicted in Figure~\ref{figure:gamma}.
\begin{figure}[h]
\centering
\includegraphics[scale=0.8]{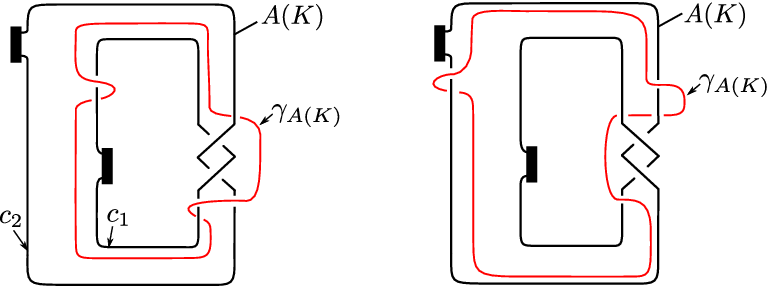}
\caption{(color online) The definition of the curve $\gamma_{A(K)}$
}\label{figure:gamma}
\end{figure}
Note that the definition of  $\gamma_{A(K)}$ depends  on the twist of $A$.
We denote by $T_{m}(A(K))$ the knot obtained from $A(K)$ by twisting $m$ times along $\gamma_{A(K)}$. 
The operation $K\mapsto T_{m}(A(K))$ is called {\it the operation $(\ast m)$}. 
The most important property of the operation $(\ast m)$ is the following.
\begin{thm}[{\cite[Theorem~3.7]{AJLO}}]\label{thm:AJLO1}
Let $K$ be a knot with a special annulus presentation $(A,b)$. 
Then, there is an orientation-preservingly homeomorphism $\psi_{m}\colon M_{K}(m)\rightarrow M_{T_{m}(A(K))}(m)$.
\end{thm}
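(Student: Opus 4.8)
The plan is to reduce the statement to the $0$-surgery homeomorphism $\phi$ of Theorem~\ref{thm:Osoinach} by means of the standard ``meridian trick'' for converting surgery coefficients. First I would record that, if $\mu_K$ denotes a meridian of $K$, then performing $(-1/m)$-surgery on $\mu_K$ realizes $m$ Rolfsen twists along $\mu_K$; since $\operatorname{lk}(\mu_K,K)=1$ and $\mu_K$ bounds a disk meeting $K$ once, this converts the $0$-framing of $K$ into the $m$-framing, so that
\[
M_K(m)\;\cong\;M_{K\cup \mu_K}(0,-1/m).
\]
The analogous identity should hold for $A(K)$: with $\gamma_{A(K)}$ as in Figure~\ref{figure:gamma}, the operation $(-1/m)$-surgery on $\gamma_{A(K)}$ is exactly that of twisting $A(K)$ $m$ times along $\gamma_{A(K)}$, and — once we check that $\operatorname{lk}(\gamma_{A(K)},A(K))=1$ — it likewise turns the $0$-framing into the $m$-framing, giving $M_{T_m(A(K))}(m)\cong M_{A(K)\cup\gamma_{A(K)}}(0,-1/m)$.

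Second, I would transport the left-hand picture to the right-hand one through $\phi$. The key geometric claim is that $\gamma_{A(K)}$ is isotopic, in $M_{A(K)}(0)$, to the image $\phi(\mu_K)$ of the meridian of $K$ under Osoinach--Teragaito's homeomorphism; this is precisely what Figure~\ref{figure:AJLO-homeo} should encode. Granting this, $\phi$ carries the $0$-framed $K$ to the $0$-framed $A(K)$ and carries $\mu_K$ to $\gamma_{A(K)}$, hence extends over the two surgery solid tori to an orientation-preserving homeomorphism
\[
\psi_m\colon M_{K\cup\mu_K}(0,-1/m)\;\longrightarrow\;M_{A(K)\cup\gamma_{A(K)}}(0,-1/m),
\]
and composing with the two identities above yields $\psi_m\colon M_K(m)\to M_{T_m(A(K))}(m)$.

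Third, I would pin down the framing arithmetic so that both sides carry coefficient exactly $m$. Since $\phi$ induces an isomorphism on $H_1(M_K(0))=\mathbf{Z}$, the class $[\gamma_{A(K)}]=[\phi(\mu_K)]$ generates $H_1(M_{A(K)}(0))=\mathbf{Z}$, which forces $\operatorname{lk}(\gamma_{A(K)},A(K))=\pm1$; fixing orientations so that this linking number is $+1$, the Rolfsen twist along the $(-1/m)$-framed $\gamma_{A(K)}$ changes the framing of $A(K)$ by $+m\cdot(\pm1)^2=+m$. This is exactly what makes the right-hand coefficient equal $m$ rather than $m\,\operatorname{lk}^2$, and it simultaneously justifies the $A(K)$-identity asserted in the first step.

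The main obstacle is the geometric identification in the second step: verifying that the explicitly drawn curve $\gamma_{A(K)}$ of Figure~\ref{figure:gamma} really coincides, up to isotopy in $M_{A(K)}(0)$, with $\phi(\mu_K)$, together with the matching of orientation conventions. This amounts to tracking a meridian of $K$ through the annulus-twist surgery on $c'_1\cup c'_2$ that defines $\phi$, and I expect it to be carried out by following the explicit surgery description of $\phi$ in Figure~\ref{figure:Osoinach-homeo}. Once the curve is correctly located and its linking number with $A(K)$ computed, the remaining steps are formal Rolfsen-twist bookkeeping.
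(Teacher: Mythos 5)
The paper itself contains no proof of this statement: it is imported from \cite[Theorem~3.7]{AJLO}, with Figure~\ref{figure:AJLO-homeo} serving as the definition of $\psi_m$. So the relevant comparison is with the proof in that reference, and your reduction does follow the same strategy: write $M_K(m)$ as $(0,-1/m)$-surgery on $K\cup\mu_K$, transport through the $0$-surgery homeomorphism $\phi$ of Theorem~\ref{thm:Osoinach}, and eliminate the image of the meridian by a Rolfsen twist. Your bookkeeping in Steps 1 and 3 (the meridian trick, and the computation that an $m$-fold Rolfsen twist along a $(-1/m)$-framed unknot of linking number $\pm1$ restores the coefficient $m$) is correct.

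There is, however, a genuine gap: the claim you defer in Step 2 --- that $\phi(\mu_K)$ is isotopic in $M_{A(K)}(0)$ to the specific curve $\gamma_{A(K)}$ of Figure~\ref{figure:gamma}, \emph{with matching surgery slopes} --- is not a peripheral verification but the entire mathematical content of the theorem. It is exactly what the explicit Kirby-calculus figures in \cite{AJLO} establish, and it is where the definition of $\gamma_{A(K)}$ (including its dependence on the sign/twist of $A$) comes from; a proof that treats it as an expected formality proves nothing. Moreover, your homological argument in Step 3 cannot substitute for the framing part of this claim: $\mu_K$ and $\gamma_{A(K)}$ generate $H_1$ of the respective $0$-surgered manifolds, so they are not null-homologous there and carry no canonical framing in those manifolds. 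Homology does give $\operatorname{lk}(\gamma_{A(K)},A(K))=\pm1$, but it cannot detect whether an isotopy from $\phi(\mu_K)$ to $\gamma_{A(K)}$ in $M_{A(K)}(0)$ carries the $S^3$-Seifert framing of $\mu_K$ to the $S^3$-Seifert framing of $\gamma_{A(K)}$; if these differ by $k$ twists, the image slope is $k-1/m$ rather than $-1/m$ and the construction of $\psi_m$ collapses. That this identification is substantive is underlined by the paper itself: Lemma~\ref{lem:non-trivial} and Section~\ref{sec:trivial} show that $\phi$ does \emph{not} in general carry meridians to meridians (that failure is what makes the operation $(\ast m)$ and the BM-condition non-vacuous), so the location and framing of $\phi(\mu_K)$ must be pinned down by honestly tracking the meridian through the surgery description of $\phi$, as in the figures of \cite{AJLO}.
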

{In \cite[Figure 15]{AJLO}, we find a proof of Theorem 3.1 for the case where $A$ is the negative
Hopf band. For the reader's convenience, in Appendix, we give
a complete proof of Theorem 3.1. }

\begin{rem}
In Theorem 3.1, we {cannot} remove the assumption  that an annulus presentation $(A, b)$ is special.
If the annulus $A$ is knotted, then the corresponding curve $\gamma_{A(K)}$ will be knotted.
Even if the annulus $A$ is unknotted, if $A$ is not Hopf bands, the corresponding {slope} of  the curve $\gamma_{A(K)}$ will not be  $-\dfrac{1}{m}$.
\end{rem}

%
%
%
\par
Now we are ready to prove Theorem~\ref{thm:6_3}.
\begin{proof}[Proof of Theorem~\ref{thm:6_3}]
Let $(A, b)$ be the special annulus presentation of $6_3$ given in Figure~\ref{figure:annulus-pre}.
By Theorem~\ref{thm:AJLO1}, we have $M_{6_3}(m)  \cong  M_{T_{m}(A(6_3))}(m)$.
All we have to prove is  that
\[ 6_3 \neq T_{m}(A(6_3))\]
for infinitely many integers $m$. By \cite[Lemma~3.12]{AJLO},
we have $\Delta_{6_3}(t) \neq \Delta_{T_{m}(A(6_3))}(t)$ for any positive integer $m$,
where $\Delta_{K}(t)$ is the Alexander polynomial of a knot $K$.
This implies that $6_3 \neq T_{m}(A(6_3))$ for infinitely many integers $m$,
that is, $6_3$  has infinitely many  non-characterizing slopes. 
\end{proof}
\section{Baker-Motegi's condition on  non-characterizing slopes}\label{sec:condition}
In \cite{BM}, Baker and Motegi gave a sufficient condition for a given knot 
to  have infinitely many non-characterizing slopes.
In this section, we show  that a knot which has a special annulus presentation with some property satisfies
Baker-Motegi's condition.

It seems to be convenient to  introduce the following terminology.
\begin{defn}
A knot  $K \subset \mathbf{S}^3$ 
satisfies {\it Baker-Motegi's condition} (BM-condition for short) if we can take an unknot $c$ disjoint from $K$ which satisfies the following properties:
\begin{itemize}
\item $c$ is not a meridian of $K$, and
\item the $(0,0)$-surgery on $K \cup c$ yields $\mathbf{S}^3$.
\end{itemize}
\end{defn}
Under this terminology, Baker and Motegi proved the following theorem.

\begin{thm}$($\cite[Theorem~1.3]{BM}$)$ \label{thm:BM}
Any knot with BM-condition has infinitely many non-characterizing slopes.
\end{thm}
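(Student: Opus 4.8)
The plan is to use the unknot $c$ supplied by the BM-condition as an axis of twisting, exactly in the spirit of the Osoinach--Teragaito construction recalled in Theorem~\ref{thm:Osoinach}, and to upgrade the single preserved surgery produced there into an infinite family of preserved slopes.

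First I would pin down the linking number. Since $M_{K\cup c}(0,0)\cong\mathbf{S}^3$ is a homology sphere and the linking matrix of the $0$-framed link $K\cup c$ is $\left(\begin{smallmatrix}0&\ell\\ \ell&0\end{smallmatrix}\right)$ with $\ell=\operatorname{lk}(K,c)$, its determinant $-\ell^{2}$ must equal $\pm1$; hence $\operatorname{lk}(K,c)=\pm1$, and after reversing an orientation if necessary I may assume $\operatorname{lk}(K,c)=1$. Because $c$ is unknotted, performing $(-1/n)$-surgery on $c$ returns $\mathbf{S}^3$ and carries $K$ to a knot $K_{n}$ (with $K_{0}=K$), namely $K$ twisted $n$ times along a disk bounded by $c$. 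The surgery bookkeeping is governed by the Rolfsen twist: for any slope $r$ on $K$ one has $M_{K\cup c}(r,-1/n)\cong M_{K_{n}}(r+n)$, using $\ell^{2}=1$.

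The heart of the argument---and the step I expect to be the main obstacle---is to convert the single hypothesis $M_{K\cup c}(0,0)\cong\mathbf{S}^3$ into a self-homeomorphism of the exterior $E=\mathbf{S}^3\setminus\nu(K\cup c)$ that permutes the Dehn fillings of $\partial\nu(c)$ while acting on $\partial\nu(K)$ by a power of the meridional Dehn twist. Such a homeomorphism is precisely the mechanism underlying the maps $\phi_{n}$ and $\psi_{m}$ of Theorems~\ref{thm:Osoinach} and~\ref{thm:AJLO1}; producing it in the abstract BM-setting, and checking that it matches a fixed filling of $\partial\nu(K)$ with the $(-1/n)$-filling of $\partial\nu(c)$, is exactly what the $(0,0)$-condition must supply (this is also where the hypothesis that $c$ is not a meridian enters, ruling out the degenerate filling map). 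Granting this, I obtain orientation-preserving homeomorphisms $M_{K}(m)\cong M_{K_{n}}(m)$ for each integer $m$, and indeed for an infinite family of slopes, so that every such slope is realized by the companion knot $K_{n}$.

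Finally I would establish non-isotopy. It suffices to find, for infinitely many slopes, a twist parameter $n\neq0$ with $K_{n}\neq K$, and this I would detect with the Alexander polynomial exactly as in the proof of Theorem~\ref{thm:6_3}: twisting along $c$ alters $\Delta$ in a controlled, eventually strictly varying fashion, so $\Delta_{K_{n}}\neq\Delta_{K}$ for all but finitely many $n$. Combined with the homeomorphisms above, this yields infinitely many non-characterizing slopes for $K$. The delicate point is that the two halves pull in opposite directions---the homeomorphism wants $K_{n}$ to look like $K$, whereas non-isotopy wants them to differ---so the genuine work lies in carrying out the Kirby-calculus identification uniformly across the infinite family of slopes while retaining enough control on $\Delta$ to separate the knots.
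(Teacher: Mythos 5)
The statement you are proving is not actually proved in this paper at all: it is imported verbatim from \cite{BM}, and the paper only ever uses it as a black box (after verifying the BM-condition via Lemma~\ref{lem:non-trivial}). So your attempt has to be measured against Baker--Motegi's own argument, equivalently against the closely related machinery the paper does develop (Theorem~\ref{thm:AJLO1}). Measured that way, the proposal has a genuine gap, and it sits exactly where you flag it: the step you ``grant'' \emph{is} the theorem, and moreover it is not correct as formulated. The $(0,0)$-hypothesis does not produce a self-homeomorphism of $E=\mathbf{S}^3\setminus \nu(K\cup c)$, and it does not produce homeomorphisms $M_{K}(m)\cong M_{K_n}(m)$ in which the companion knots $K_n$ are the twists of $K$ along $c$. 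A Rolfsen twist along $c$ unavoidably shifts the slope, $M_{K\cup c}(m,-1/n)\cong M_{K_n}(m+n\,\mathrm{lk}(K,c)^2)$, and no symmetry of $E$ is available to cancel that shift. What the hypothesis actually supplies is a \emph{second} $\mathbf{S}^3$-filling of $E$, and the companions come from its dual link $c^*\cup K^*\subset \mathbf{S}^3=M_{K\cup c}(0,0)$: one writes $M_{K}(m)\cong M_{K\cup \mu_K}(0,-1/m)$ (the meridian trick), then reinterprets this in the dual picture, where the $0$-surgered $K$ becomes the $0$-surgered $c^*$ and $\mu_K$ becomes the dual knot $K^*$, which is an unknot of linking number $\pm 1$ with $c^*$; its unknottedness is itself a theorem (after $0$-surgery on $c$, the image of $K$ in $S^1\times S^2$ admits an $\mathbf{S}^3$-surgery, so by Gabai's Property~R theorem it is a core, whence $K^*$ has solid-torus exterior). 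A Rolfsen twist along $K^*$, not along $c$, then yields $M_{K}(m)\cong M_{(c^*)_m}(m)$, where $(c^*)_m$ is $c^*$ twisted $m$ times along $K^*$. Note the shape: the companion depends on $m$ and is a twist of the dual knot $c^*$, exactly as in Theorem~\ref{thm:AJLO1}, where the companion of $K$ at slope $m$ is $T_m(A(K))$, a twist of $A(K)$ along $\gamma_{A(K)}$ --- never a twist of $K$ itself along the original curve. Also, the non-meridian hypothesis plays no role in constructing these homeomorphisms (they exist even when $c=\mu_K$, just uselessly, with $(c^*)_m=K$); it is needed only at the very end, not where you invoke it.

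The second half of your proposal is also a gap. There is no general theorem asserting that twisting a knot along a linking-number-one unknot eventually changes the Alexander polynomial; indeed, families of knots sharing a $0$-surgery (Theorem~\ref{thm:Osoinach}) all have the same Alexander polynomial, so ``twisting strictly varies $\Delta$'' is not something one can appeal to in the abstract. In the paper's proof of Theorem~\ref{thm:6_3} this step is an explicit computation for the specific family $T_m(A(6_3))$, quoted from \cite[Lemma~3.12]{AJLO}, and in Baker--Motegi's general setting the non-isotopy argument is geometric rather than polynomial: since $c$ is not a meridian of $K$, dually $K^*$ is not a meridian of $c^*$, and a twist family along a non-meridional unknot of nonzero linking number can contain any fixed knot type only finitely often; hence $(c^*)_m\neq K$ for all but finitely many $m$. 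On top of being unjustified, your $\Delta$-argument is aimed at the wrong family: what must be distinguished from $K$ are the companions $(c^*)_m$, not the twists $K_n$ of $K$ along $c$.
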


\begin{rem}
The technique used in \cite{BM}  is essentially the same as use of a dualizable pattern defined by Gompf-Miyazaki \cite{Gompf-Miyazaki} and developed by Miller and Piccirillo \cite{Miller-Piccirillo}. 
\end{rem}

Baker and Motegi asked which knots satisfy BM-condition, 
see \cite[Question~5.3]{BM}.
They proved that any L-space knot does not satisfy BM-condition, which is a ``negative''  result (\cite[Theorem~1.8]{BM}).
The following lemma gives a constructive result.

\par
\begin{lem}\label{lem:non-trivial}
Let $K$ be a knot with a special annulus presentation $(A,b)$ 
and $\beta_{K}\subset\mathbf{S}^3\setminus \nu(K)$ the closed curve depicted in Figure~\ref{figure:beta}. 
Then the $(0,0)$-surgery on $K \cup \beta_{K}$ yields $\mathbf{S}^3$.
Moreover, if $\beta_{K}$ is not a meridian of $K$ in $\mathbf{S}^3$, then 
$K$ satisfies BM-condition.
\end{lem}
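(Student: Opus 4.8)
The statement splits into two claims, and the plan is to treat them in order, concentrating essentially all of the effort on the first. For the assertion that $M_{K\cup\beta_K}(0,0)\cong\mathbf{S}^3$, I would read a surgery diagram for $M_{K\cup\beta_K}(0,0)$ directly off the special annulus presentation and simplify it by Kirby calculus. Since $(A,b)$ is special, $A$ is a Hopf band, so each boundary component of $A$ is an unknot and, with the parallel orientation, $lk(c_1,c_2)=\pm1$ (as in Example~\ref{ex:6_3}). A first, homological, check is that the linking matrix of the $(0,0)$-framed link $K\cup\beta_K$ is $\left(\begin{smallmatrix}0&l\\ l&0\end{smallmatrix}\right)$ with $l=lk(K,\beta_K)$, so the surgery can produce a homology sphere only when $l=\pm1$; I would confirm from Figure~\ref{figure:beta} that $\beta_K$ is an unknot linking $K$ exactly once.

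To upgrade ``homology sphere'' to ``$\mathbf{S}^3$'', the plan is to connect the diagram to the annulus twist. For a special presentation the $1$-fold annulus twist performs $(lk(c_1,c_2)+1)$-surgery on $c_1'$ and $(lk(c_1,c_2)-1)$-surgery on $c_2'$, and since $lk(c_1,c_2)=\pm1$ exactly one of these framings is $0$; by \cite[Theorem~2.1]{Osoinach} the corresponding surgery on $c_1'\cup c_2'$ yields $\mathbf{S}^3$. I would then exhibit an explicit sequence of Kirby moves on $K\cup\beta_K$ — in particular a handle slide of $\beta_K$ over $K$, which converts the $0$-framing on $\beta_K$ into a $\pm2$-framing matching the annulus twist — together with a blow-down, reducing the $(0,0)$-framed diagram to one to which the cited computation applies, and hence identifying the result with $\mathbf{S}^3$.

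Granting the first claim, the second follows immediately from the definition: $\beta_K$ lies in $\mathbf{S}^3\setminus\nu(K)$ and is an unknot, it is not a meridian of $K$ by hypothesis, and $(0,0)$-surgery on $K\cup\beta_K$ yields $\mathbf{S}^3$, so $c:=\beta_K$ verifies all three requirements and $K$ satisfies BM-condition.

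I expect the main obstacle to be the Kirby-calculus reduction in the first claim: one must track carefully how $\beta_K$ and the $0$-framings interact with the band $b$ and its ribbon intersections with $\operatorname{Int}A$, and ensure that the handle slides really return the annulus-twist surgery on $c_1'\cup c_2'$ rather than some other homology sphere. Checking at the level of Figure~\ref{figure:beta} that $\beta_K$ is unknotted and satisfies $lk(K,\beta_K)=\pm1$ is the remaining point that must be settled before the reduction can begin.
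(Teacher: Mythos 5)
Your preliminary checks ($\beta_K$ is an unknot with $lk(K,\beta_K)=\pm1$, forced homologically) and your treatment of the ``moreover'' part are correct and agree with the paper. The gap is in the main claim, and it is serious. The specific move sequence you propose cannot run: after the single slide of $\beta_K$ over the $0$-framed $K$, the diagram consists of $K$ (still framing $0$, and still knotted, since sliding $\beta_K$ does not alter $K$) together with a curve of framing $\pm2$; there is no $\pm1$-framed unknot component, so no blow-down is available, and whenever $K$ is non-trivial (the case of interest) a two-component diagram having $K$ as a component is not isotopic to the Hopf link $c_1'\cup c_2'$ with framings $(lk(c_1,c_2)+1,\,lk(c_1,c_2)-1)$. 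Beyond that, invoking ``an explicit sequence of Kirby moves'' whose existence is never established is circular here: by Kirby's theorem such a sequence exists if and only if $M_{K\cup\beta_K}(0,0)\cong\mathbf{S}^3$, which is precisely the statement to be proved. Finally, the lemma quantifies over \emph{all} knots with special annulus presentations, so any diagrammatic argument must be schematic and uniform in $K$; an ad hoc simplification of one figure would not suffice.

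The idea your proposal is missing---and the paper's actual argument---is that $\beta_K$ is drawn (Figure~\ref{figure:beta}) exactly so that the Osoinach--Teragaito homeomorphism $\phi\colon M_K(0)\to M_{A(K)}(0)$ of Theorem~\ref{thm:Osoinach} carries it to a meridian of $A(K)$: one regards $\beta_K$ as a curve in $M_K(0)$ via $\beta_K\subset \mathbf{S}^3\setminus\nu(K)=M_K(0)\setminus\nu(L)$ ($L$ the surgery dual of $K$), and checks on Teragaito's surgery description of $\phi$ (Figure~\ref{figure:Osoinach-homeo}) that $\phi(\beta_K)=\alpha_{A(K)}$ with the $0$-framings corresponding. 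This check is local to the annulus and band, hence uniform in $K$. Granting it, $M_{K\cup\beta_K}(0,0)\cong M_{A(K)\cup\alpha_{A(K)}}(0,0)\cong\mathbf{S}^3$, the last step being the standard fact that surgery on a knot with any framing together with a $0$-framed meridian returns $\mathbf{S}^3$. So the annulus twist does enter, as you suspected, but at the level of the $0$-surgered manifold and the curve $\beta_K$, not as a reduction of the framed link $K\cup\beta_K$ to the framed link $c_1'\cup c_2'$. If you wanted a purely diagrammatic proof, the moves to exhibit are those realizing $\phi$ and the isotopy from $\phi(\beta_K)$ to $\alpha_{A(K)}$---a different, and necessarily schematic, sequence from the one you describe.
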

\begin{proof}
\begin{figure}[h!]
\centering
\includegraphics[scale=0.8]{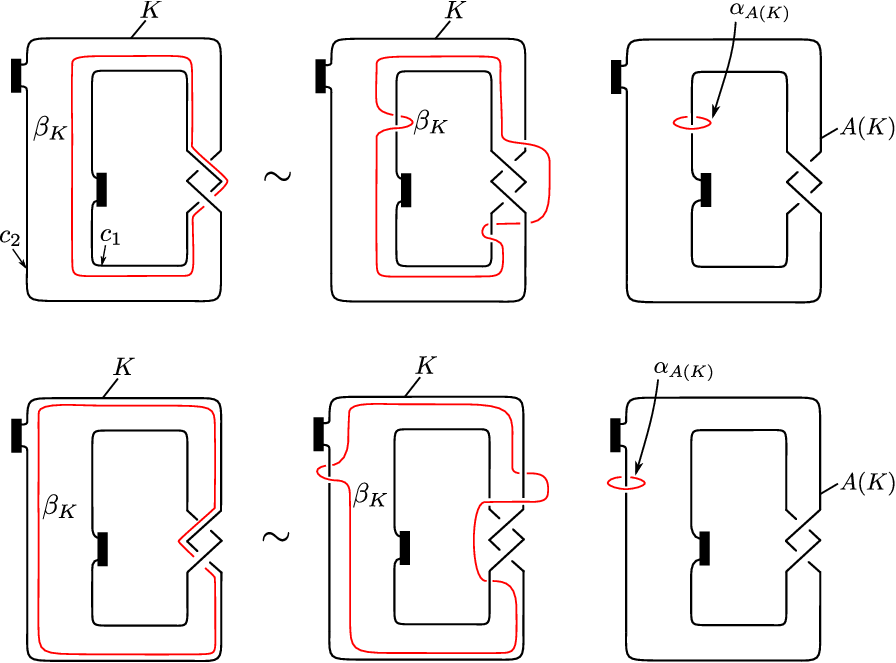}
\caption{(color online) The definition of the closed curve  $\beta_{K}\subset\mathbf{S}^3\setminus \nu(K)$}
\label{figure:beta}
\end{figure}
Let $L\subset M_{K}(0)$ be the surgery dual to $K$. 
We can regard $\beta_{K}$ as a knot in $M_{K}(0)$ because of $\beta_{K}\subset \mathbf{S}^3\setminus \nu(K)=M_{K}(0)\setminus \nu(L)$. 
Then, we can check that
 Osoinach-Teragaito's homeomorphism $\phi$ induces a homeomorphism 
 \[ \phi\colon (M_{K}(0),\beta_{K})\rightarrow (M_{A(K)}(0),\alpha_{A(K)}),\]
 where  $\alpha_{A(K)}$ is a meridian of $A(K)$. 
Moreover, we see that $\phi$ preserves {slopes} of $\beta_{K}$ and $\alpha_{A(K)}$. 
Since $M_{A(K)\cup \alpha_{A(K)}}(0,0)\cong \mathbf{S}^3$, we have $M_{K\cup \beta_{K}}(0,0)\cong \mathbf{S}^3$. 
Moreover, if $\beta_{K}$ is not a meridian of $K$, we see that $K$ satisfies BM-condition by taking $c=\beta_{K}$ in the definition of BM-condition. 
\end{proof}
\begin{rem}
There is an alternative proof of Theorem~\ref{thm:6_3}.
By applying Lemma~\ref{lem:non-trivial} to the special annulus  presentation of $6_3$ 
depicted in Figure~\ref{figure:annulus-pre}, we see that $6_3$  has infinitely many non-characterizing slopes.
For the detail, see the next section.
In the forthcoming paper, we prove that the two proofs of Theorem~\ref{thm:6_3} are essentially the same.
\end{rem}
%
%

%
\section{Proof of Theorem~\ref{thm:8}}\label{sec:main-thm2}
In this section, we prove  Theorem~\ref{thm:8}, which states that 
the knots $6_2$, $6_3$, $7_6$, $7_7$,  $8_1$, $8_3$, $8_4$, $8_6$, $8_7$, 
$8_9$, $8_{10}$, $8_{11}$, $8_{12}$, $8_{13}$, $8_{14}$, $8_{17}$, $8_{20}$ and $8_{21}$ 
have infinitely many non-characterizing slopes.

\par
The following lemma is useful in finding a special annulus presentation of  a given knot.
\begin{lem}$($\cite[Lemma~2.2]{AJOT}$)$ \label{lem:unknotting}
Any unknotting number one knot has a special annulus presentation.
\end{lem}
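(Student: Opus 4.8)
The plan is to use the characterization recorded just before Example~\ref{ex:6_3}: a knot has a positive (resp.\ negative) special annulus presentation if and only if it is obtained from the positive (resp.\ negative) Hopf link by a single band surgery. So for a knot $K$ with unknotting number one it suffices to produce a Hopf link $c_{1}\cup c_{2}$, bounding a Hopf band $A$, together with a band $b$ joining $c_{1}$ to $c_{2}$ whose band surgery returns $K$. My strategy is to read such a pair $(A,b)$ off from an unknotting crossing, using the standard fact that changing a crossing of $K$ is the same as adding a $\mp 1$ full twist along a crossing disk meeting $K$ in two points.

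First I would fix a diagram of $K$ with a crossing $c$ whose change yields the unknot $U$, and reverse the roles, so that $K$ is recovered from $U$ by adding a single full twist to the two strands passing through a crossing disk $D$ with $\partial D\cap U=\varnothing$ and $D\cap U$ two points. The twist is supported in a thin band running along these two strands and carrying one full twist; since one full twist makes the two boundary edges link once, this band is exactly a Hopf band $A$, and its boundary $\partial A=c_{1}\cup c_{2}$ is a Hopf link (positive or negative according to the sign of the twist, hence of the changed crossing). The complementary arcs of $U$, lying outside $D$, then close up $c_{1}$ and $c_{2}$ into $K$; guiding them by a disk that $U$ bounds, I would record them as a single band $b\colon I\times I\to\mathbf{S}^3$ attached to $\partial A$, so that $(\partial A\setminus b(\partial I\times I))\cup b(I\times\partial I)$ is isotopic to $K$.

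It then remains to check that $(A,b)$ meets the three conditions in the definition of an annulus presentation in Section~\ref{sec:annulus}: $b(I\times I)\cap\partial A=b(\partial I\times I)$, $b(I\times I)$ meets $\operatorname{Int}A$ only in ribbon singularities, and $A\cup b(I\times I)$ is an immersed orientable surface; once these hold, the presentation is automatically special because $A$ is a Hopf band. I expect the ribbon condition to be the main obstacle: a priori the band $b$, which tracks the complementary tangle, could clasp the twisted part of $A$ rather than meeting it only in ribbon fashion. This is precisely where the hypothesis $u(K)=1$ enters---because the complementary arcs close up to the unknot $U$, they bound a disk whose interior can be taken disjoint from the twist region of $A$, and isotoping $b$ onto this disk turns every clasp intersection into a ribbon singularity. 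A final, purely bookkeeping step is to match the sign of the Hopf band with the sign of the changed crossing; as Example~\ref{ex:6_3} shows for $6_{3}$, the band $b$ may force $\operatorname{lk}(c_{1},c_{2})=1$ even when $A$ is the negative Hopf band, so the orientation conventions must be tracked with care.
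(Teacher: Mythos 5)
The paper itself gives no proof of this statement: it is imported verbatim from \cite[Lemma~2.2]{AJOT}. Your construction (unknotting crossing, crossing disk $D$, full twist, Hopf band $A$ spanned by the twisted strands, band $b$ guided by a disk bounded by the unknot $U$) follows the same standard outline as the cited argument, but there is a genuine gap at exactly the step you flag as the main obstacle. The problem is the sentence ``they bound a disk whose interior can be taken disjoint from the twist region of $A$.'' This claim is false, and it fails precisely for knots to which the lemma is applied in this paper. If a disk $\Delta$ with $\partial\Delta=U$ could be made disjoint from the whole surgery region, i.e.\ from a neighborhood $\nu(\partial D)$ of the crossing circle, then $\Delta$ would survive the $\mp1$-surgery on $\partial D$ and exhibit $K$ as the unknot. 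Under the weaker (intended) reading that $\Delta$ meets the twist region only in collars of the two strands, $\Delta$ meets $\partial D$ in exactly two points; then removing the two meridian disks $\Delta\cap\nu(\partial D)$ and capping the resulting boundary circles by an annulus inside the resurgered solid torus yields an embedded orientable genus-one Seifert surface for $K$. Equivalently, in your construction the band $b$ would miss $\operatorname{Int}A$, so $A\cup b(I\times I)$ would be an embedded genus-one surface with boundary $K$. Either way the Seifert genus of $K$ would be at most one. But $6_2$ and $6_3$ have unknotting number one and Seifert genus two (they enter Lemma~\ref{lem:annulus-pre} through this very lemma), so for these knots every disk bounded by $U$ must cross the twist region in sheets other than the two strand collars --- indeed it must meet $\partial D$ in at least four points.

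Consequently the ribbon singularities are not a nuisance that can be isotoped away: for unknotting number one knots of Seifert genus at least two, \emph{every} special annulus presentation satisfies $b(I\times I)\cap\operatorname{Int}A\neq\emptyset$, and the actual content of the cited lemma is that the unavoidable passages of the unknotting disk through the twist region can be organized so that they become ribbon intersections of the band with the Hopf band, rather than clasps. Your write-up asserts this conclusion (``isotoping $b$ onto this disk turns every clasp intersection into a ribbon singularity''), but the mechanism offered for it --- disjointness from the twist region --- cannot occur, so nothing in the argument actually produces the ribbon structure. A repair must track how the extra sheets of $\Delta$ cross the crossing disk $D$ and show that the full twist converts each such sheet into a strip of $b$ passing through $\operatorname{Int}A$ from one side of the band to the other; that analysis is the substance of \cite[Lemma~2.2]{AJOT} and is the missing idea here.
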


\begin{rem}
The unknotting number of the Conway knot  is one.
In \cite{Piccirillo}, Piccirillo used  Lemma~\ref{lem:unknotting} implicitly, see Figure~\ref{figure:Conway} 
(compare with (d) in Figure~4 in \cite{Piccirillo}).
\begin{figure}[h!]
\centering
\includegraphics[scale=0.1]{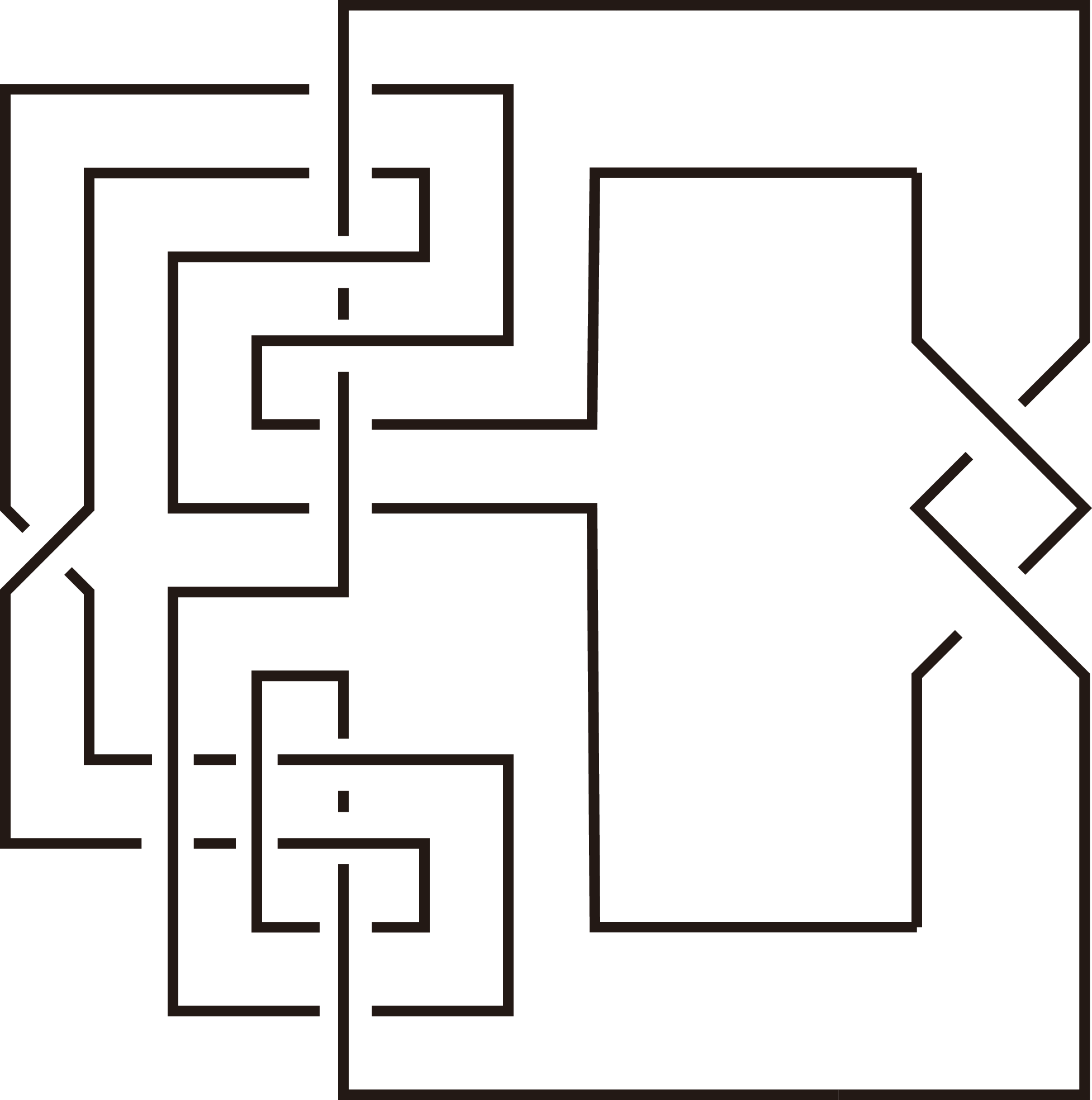}
\caption{A special annulus presentation $(A, b)$ of the Conway knot}
\label{figure:Conway}
\end{figure}
\end{rem}
\par
We obtain the following.
\begin{lem} \label{lem:annulus-pre} The following knots have special annulus presentations:
$U$, $3_1$, $4_1$, $5_2$, $6_1$, $6_2$, $6_3$, $7_2$, $7_6$, $7_7$, 
$8_1$, $8_3$, $8_4$, $8_6$, $8_7$, $8_9$, $8_{10}$, $8_{11}$, $8_{12}$, 
$8_{13}$, $8_{14}$, $8_{17}$, $8_{20}$, $8_{21}$.
\end{lem}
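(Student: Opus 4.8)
The plan is to split the list according to unknotting number and treat the two parts by different means. By Lemma~\ref{lem:unknotting}, any unknotting number one knot automatically carries a special annulus presentation, so the first step is simply to identify which of the listed knots have unknotting number one. These are
$3_1$, $4_1$, $5_2$, $6_1$, $6_2$, $6_3$, $7_2$, $7_6$, $7_7$, $8_1$, $8_7$, $8_9$, $8_{13}$, $8_{14}$, $8_{17}$, $8_{20}$ and $8_{21}$, and for all of them the conclusion is immediate from the lemma.

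This leaves the unknot $U$ together with $8_3$, $8_4$, $8_6$, $8_{10}$, $8_{11}$ and $8_{12}$, each of which has unknotting number at least two, so that Lemma~\ref{lem:unknotting} no longer applies. For these I would instead produce special annulus presentations by hand, exhibiting each knot explicitly as the result of a single band surgery on a Hopf link; equivalently, drawing an annulus presentation $(A,b)$ in which $A$ is a Hopf band, in the style of Figure~\ref{figure:annulus-pre}. For $U$ this is immediate, since an appropriately attached band turns the Hopf link into the unknot. The six remaining candidates will be presented diagrammatically, and the explicit diagrams are to be collected in the table of Section~\ref{sec:table}.

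The routine but genuinely delicate part, and the point I expect to be the main obstacle, is the verification that each hand-drawn band surgery on a Hopf link really produces the claimed knot type rather than some other knot of the same crossing number. To settle this I would reduce each resulting diagram to a standard diagram of the target knot by Reidemeister moves, and, as an independent cross-check, compute a discriminating invariant such as the Jones or Alexander polynomial and compare it with the tabulated value. Once every band surgery has been matched to its intended knot, the listed knots are exhausted and the lemma follows.
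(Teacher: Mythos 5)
Your proposal follows essentially the same route as the paper: split the list by unknotting number, dispatch the unknotting number one knots via Lemma~\ref{lem:unknotting}, and exhibit explicit special annulus presentations (band surgeries on a Hopf band) for the remainder; the paper does exactly this, with the explicit diagrams for the second group collected in Figure~\ref{annulus-pre}. One factual correction, though: $8_{11}$ has unknotting number one, not ``at least two,'' so it belongs in your first group and is covered by Lemma~\ref{lem:unknotting} (this is how the paper treats it); your claim to the contrary is false, although it does not break your argument, since your fallback of drawing an explicit presentation for $8_{11}$ would still succeed. Your separate treatment of the unknot $U$ is reasonable and, if anything, more careful than the paper, which lists $U$ among the ``unknotting number one'' knots even though $u(U)=0$; in either case the required band surgery turning the Hopf link into $U$ is immediate.
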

\begin{proof}
The unknotting number of $U$, $3_1$, $4_1$, $5_2$, $6_1$, $6_2$, $6_3$, $7_2$, $7_6$, $7_7$, 
$8_1$, $8_7$, $8_9$, $8_{11}$, $8_{13}$, $8_{14}$, $8_{17}$, $8_{20}$ and $8_{21}$ is one. By Lemma~\ref{lem:unknotting}, 
these knots have special annulus presentations.
The unknotting number of $8_3,  8_4, 8_6,8_{10}$ and $8_{12}$  is two. 
We can find  special annulus presentations for these knots, see Figure~\ref{annulus-pre}.
\end{proof}
\begin{figure}[h]
\begin{center}
\includegraphics[scale=0.7]{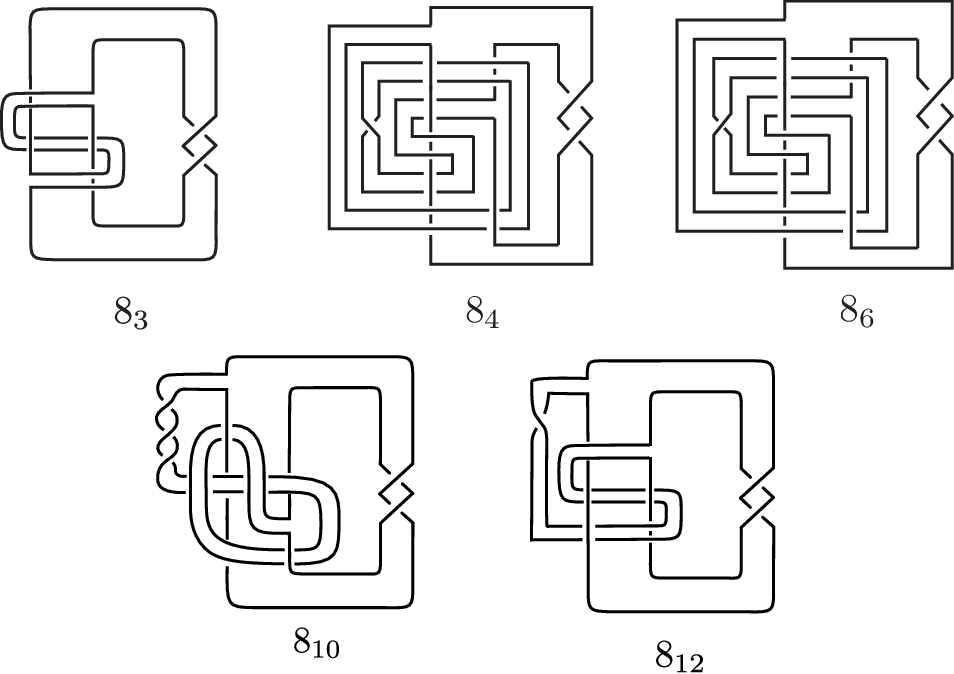}
\end{center}
\caption{Special annulus presentations of $8_3$, $8_4$, $8_6$, $8_{10}$ and $8_{12}$}
\label{annulus-pre}
\end{figure}
The following lemma is a key to prove Theorem~\ref{thm:8}.
\begin{lem} \label{lem:dualizable}
The  knots
$6_2$, $6_3$, $7_6$, $7_7$, $8_1$, $8_3$, $8_4$, $8_6$, $8_7$, $8_9$, $8_{10}$, 
$8_{11}$, $8_{12}$, $8_{13}$, $8_{14}$, $8_{17}$, $8_{20}$ and $8_{21}$ satisfy 
BM-condition. 
\end{lem}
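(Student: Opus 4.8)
The plan is to reduce Lemma~\ref{lem:dualizable} to Lemma~\ref{lem:non-trivial}, whose combinatorial hypothesis has already done most of the work. By Lemma~\ref{lem:annulus-pre}, each of the eighteen knots $6_2$, $6_3$, $7_6$, $7_7$, $8_1$, $8_3$, $8_4$, $8_6$, $8_7$, $8_9$, $8_{10}$, $8_{11}$, $8_{12}$, $8_{13}$, $8_{14}$, $8_{17}$, $8_{20}$ and $8_{21}$ admits a specific special annulus presentation $(A,b)$. For each such presentation, Lemma~\ref{lem:non-trivial} already guarantees that the $(0,0)$-surgery on $K\cup\beta_K$ yields $\mathbf{S}^3$ and that the curve $\beta_K$ is an unknot disjoint from $K$. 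Therefore the only remaining hypothesis to verify, knot by knot, is that $\beta_K$ is \emph{not} a meridian of $K$ in $\mathbf{S}^3$. Once this is checked, Lemma~\ref{lem:non-trivial} immediately certifies that $K$ satisfies BM-condition.

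So the heart of the argument is, for each of the eighteen presentations, to show $\beta_K$ is not a meridian. First I would draw $\beta_K$ explicitly from the recipe in Figure~\ref{figure:beta} applied to the chosen presentation. The cleanest obstruction is an algebraic one: if $\beta_K$ were a meridian of $K$, then $0$-surgery on $\beta_K$ would produce a manifold with the same fundamental group as $\mathbf{S}^3\setminus\nu(K)$, and more usefully, performing the \emph{annulus twist} along $(A,b)$ would not change $K$ at all. Concretely, recall from Lemma~\ref{lem:non-trivial} that Osoinach-Teragaito's homeomorphism $\phi$ carries the pair $(M_K(0),\beta_K)$ to $(M_{A(K)}(0),\alpha_{A(K)})$ with $\alpha_{A(K)}$ a meridian of $A(K)$. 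If $\beta_K$ were also a meridian of $K$, this would force $M_K(0)\setminus\nu(\beta_K)\cong M_{A(K)}(0)\setminus\nu(\alpha_{A(K)})$ to restrict to a homeomorphism $\mathbf{S}^3\setminus\nu(K)\cong\mathbf{S}^3\setminus\nu(A(K))$ matching meridians, hence by Gordon-Luecke $K=A(K)$. Thus it suffices to exhibit, for each knot, that the annulus twist changes the knot type, i.e. $K\neq A(K)$.

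The most economical way to detect $K\neq A(K)$ is by a computable invariant; the Alexander polynomial $\Delta_K(t)$ is the natural first choice, exactly as it was used in the proof of Theorem~\ref{thm:6_3} via \cite[Lemma~3.12]{AJLO}. For each presentation I would compute $\Delta_{A(K)}(t)$ from the diagram of $A(K)$ obtained by performing the annulus twist (cf.\ Figure~\ref{figure:annulus-twist}) and compare it with the tabulated $\Delta_K(t)$. Whenever the two polynomials differ, $K\neq A(K)$, and hence $\beta_K$ is not a meridian, completing the verification for that knot. I expect that for most of the eighteen knots the Alexander polynomials already separate $K$ from $A(K)$.

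The main obstacle will be the knots for which $\Delta_{A(K)}(t)=\Delta_K(t)$, since there the Alexander polynomial alone cannot rule out $\beta_K$ being a meridian. For those recalcitrant cases I would fall back on a finer invariant — the Jones polynomial, the knot group presentation read off from the twisted diagram, or a direct geometric argument showing $\beta_K$ links $K$ in a way incompatible with a meridian (for instance, that $\beta_K$ bounds a disk meeting $K$ in more than one point, or that the linking number or the induced homology class obstructs it). In the worst case one checks directly from the explicit diagram of $\beta_K$ that it is not isotopic into a meridian disk of $K$. Since all eighteen knots have at most eight crossings, each of these computations is finite and can be carried out explicitly, so the lemma follows by assembling the eighteen case checks.
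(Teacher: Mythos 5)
Your reduction is the same as the paper's: by Lemma~\ref{lem:annulus-pre} each of the eighteen knots has a special annulus presentation, and by Lemma~\ref{lem:non-trivial} everything comes down to showing, presentation by presentation, that $\beta_K$ is not a meridian of $K$. Your key implication ``$\beta_K$ a meridian $\Rightarrow K=A(K)$'' is also correct --- it is essentially Lemma~\ref{lem:trivial1} combined with Gordon--Luecke --- so exhibiting $K\neq A(K)$ would indeed suffice. The genuine gap is in how you propose to certify this. Detecting $K\neq A(K)$ is only a \emph{sufficient} condition for $\beta_K$ not being a meridian, and it can fail outright for knots on the list: the paper itself (Section~\ref{subsec:equivalent-knots}, Figure~\ref{equivalent-knots}) exhibits a special annulus presentation of $8_1$ with $\operatorname{Int}A\cap b=\emptyset$, and for any such presentation $A^{n}(K)=K$ for every $n$. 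For a presentation of that kind no invariant of $A(K)$ --- Alexander polynomial, Jones polynomial, knot group --- can ever separate $A(K)$ from $K$, so your primary method and your first fallbacks are dead on arrival, yet the lemma must still be proved for that knot. Your remaining fallbacks do not close the case: the linking number of $\beta_K$ with $K$ is forced to be $\pm1$ (otherwise $(0,0)$-surgery on $K\cup\beta_K$ could not yield $\mathbf{S}^3$), so it carries no information, and ``checking directly from the diagram that $\beta_K$ is not isotopic into a meridian disk'' is not a proof technique --- showing a curve is \emph{not} a meridian requires an invariant. Note also that the converse direction is unavailable: as the paper's open question in Section~\ref{subsec:equivalent-knots} indicates, $K=A^{n}(K)$ does not allow you to conclude that the relevant curve \emph{is} a meridian, so in these cases your scheme is simply silent, not wrong but incomplete.

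The missing idea --- and what the paper actually does --- is to compare invariants of the two-component \emph{links} rather than of the twisted knots: $\beta_K$ is a meridian of $K$ if and only if the link $K\cup\beta_K$ is isotopic to $K\cup\alpha_K$, so any link invariant distinguishing these two links finishes the job. The paper computes the Conway polynomials $\nabla_{K\cup\alpha_K}(z)$ and $\nabla_{K\cup\beta_K}(z)$ (with orientations chosen so the linking numbers are one) for all eighteen knots and observes that they differ; this works uniformly, in particular for $8_1$, regardless of whether the annulus twist changes the knot type. Finally, even in the favorable cases your proposal defers all eighteen computations (``I would compute\ldots I expect\ldots''), so as written it is a strategy rather than a proof; the computational content is exactly where the paper's proof lives.
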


\begin{proof}
Let $\kappa$ be a knot with a special annulus presentation.
We denote by $\alpha_{\kappa}$    a meridian of $\kappa$ and  
by $\beta_{\kappa}$  the closed curve defined in Figure~\ref{figure:beta}.
We first consider $6_2$.
It has a special annulus presentation and the  link $6_2 \cup \beta_{6_2}$ is
 the left picture in Figure~\ref{6_2}.
Then  we have
\[\nabla_{6_2 \cup \alpha_{6_2}}(z)= 
-z^{5}-z^{3}+z,\ \ \ \   \nabla_{6_2 \cup \beta_{6_2}}(z)= 
-z^{3}+z, \]
where we choose orientations of $6_2  \cup \alpha_{6_2} $ and  $6_2  \cup \beta_{6_2} $
so that the linking numbers of $6_2  \cup \alpha_{6_2} $ and $6_2  \cup \beta_{6_2} $ 
are one, respectively.  Here $\nabla_{L}(z)$ is the Conway polynomial of an oriented link $L$.
Therefore  $\beta_{6_2}$ is not a meridian of $6_2$. 
This implies that $6_2$ satisfies 
BM-condition by Lemma~\ref{lem:non-trivial}.

Let $K$ be one of the knots  $6_3$, $7_6$, $7_7$, $8_1$, $8_3$, $8_4$, $8_6$, $8_7$, $8_9$,
$8_{10}$, $8_{11}$, $8_{12}$, $8_{13}$, $8_{14}$, $8_{17}$, $8_{20}$ and $8_{21}$.
It has a special annulus presentation and the  link $K \cup \beta_{K}$ is as 
 in Figures~\ref{6_2} and \ref{7_7}.
 Then  we have
$\nabla_{K\cup \alpha_{K}}(z)
 \neq \nabla_{K \cup \beta_{K}}(z)$, 
where we choose orientations of $K \cup \alpha_{K}$ and  $K  \cup \beta_{K} $
so that their  linking numbers are one. 
Therefore   $\beta_{K}$ is not a meridian of $K$. 
This implies that $K$ satisfies 
BM-condition by Lemma~\ref{lem:non-trivial}.
 For the actual calculations, see the following.
\begin{align*}
\nabla_{6_3 \cup \alpha_{6_3}}(z)&=z^{5}+z^{3}+z,  &\nabla_{6_3 \cup \beta_{6_3}}(z)&=z^{3}+z,  \\
\nabla_{7_6 \cup \alpha_{7_6}}(z)&=-z^{5}+z^{3}+z,  &\nabla_{7_6 \cup \beta_{7_6}}(z)&=z^{3}+z,  \\
\nabla_{7_7 \cup \alpha_{7_7}}(z)&=z^{5}-z^{3}+z,&   \nabla_{7_7 \cup \beta_{7_7}}(z)&=-z^{3}+z,\\ 
\nabla_{8_1 \cup \alpha_{8_1}}(z)&=-3z^{3}+z,&\nabla_{8_1 \cup \beta_{8_1}}(z)&=-z^{5}-3z^{3}+z, \\
\nabla_{8_3 \cup \alpha_{8_3}}(z)&= -4z^{3}+z,&\nabla_{8_3 \cup \beta_{8_3}}(z)&= -z^{5}-4z^{3}+z, \\
\nabla_{8_{4} \cup \alpha_{8_{4}}}(z)&=-2z^{5}-3z^{3}+z,& \nabla_{8_{4} \cup \beta_{8_{4}}}(z)&=-z^{5}-3z^{3}+z, \\
\nabla_{8_{6} \cup \alpha_{8_{6}}}(z)&=-2z^{5}-2z^{3}+z,& \nabla_{8_{6} \cup \beta_{8_{6}}}(z)&=-z^{5}-2z^{3}+z, \\
\nabla_{8_7 \cup \alpha_{8_7}}(z)&= z^{7}+3z^{5}+2z^{3}+z,& \nabla_{8_7 \cup \beta_{8_7}}(z)&= 2z^{3}+z, \\
\nabla_{8_9 \cup \alpha_{8_9}}(z)&= -z^{7}-3z^{5}-2z^{3}+z,&\nabla_{8_9 \cup \beta_{8_9}}(z)&= -2z^{3}+z, \\
\nabla_{8_{10} \cup \alpha_{8_{10}}}(z)&=z^{7}+3z^{5}+3z^{3}+z,& \nabla_{8_{10} \cup \beta_{8_{10}}}(z)&=3z^{3}+z, \\
\nabla_{8_{11} \cup \alpha_{8_{11}}}(z)&= -2z^{5}-z^{3}+z,&  \nabla_{8_{11} \cup \beta_{8_{11}}}(z)&= -z^{3}+z, \\
\nabla_{8_{12} \cup \alpha_{8_{12}}}(z)&=z^{5}-3z^{3}+z,& \nabla_{8_{12} \cup \beta_{8_{12}}}(z)&=z^{7}+3z^{5}-3z^{3}+z, \\
\nabla_{8_{13} \cup \alpha_{8_{13}}}(z)&=2z^{5}+z^{3}+z,& \nabla_{8_{13} \cup \beta_{8_{13}}}(z)&=z^{3}+z, \\
\nabla_{8_{14} \cup \alpha_{8_{14}}}(z)&=-2z^{5}+z,& \nabla_{8_{14} \cup \beta_{8_{14}}}(z)&=z, \\
\end{align*}
\begin{align*}
\nabla_{8_{17} \cup \alpha_{8_{17}}}(z)&=-z^{7}-2z^{5}-z^{3}+z,&\nabla_{8_{17} \cup \beta_{8_{17}}}(z)&=-z^{3}+z, \\
\nabla_{8_{20} \cup \alpha_{8_{20}}}(z)&= z^{5}+2z^{3}+z,&  \nabla_{8_{20} \cup \beta_{8_{20}}}(z)&= 2z^{3}+z, \\
\nabla_{8_{21} \cup \alpha_{8_{21}}}(z)&= -z^{5}-3z^{3}+z,&  \nabla_{8_{21} \cup \beta_{8_{21}}}(z)&=-3z^{3}+z.
\end{align*}
\end{proof}

\begin{figure}[h!]
\centering
\includegraphics[scale=0.12]{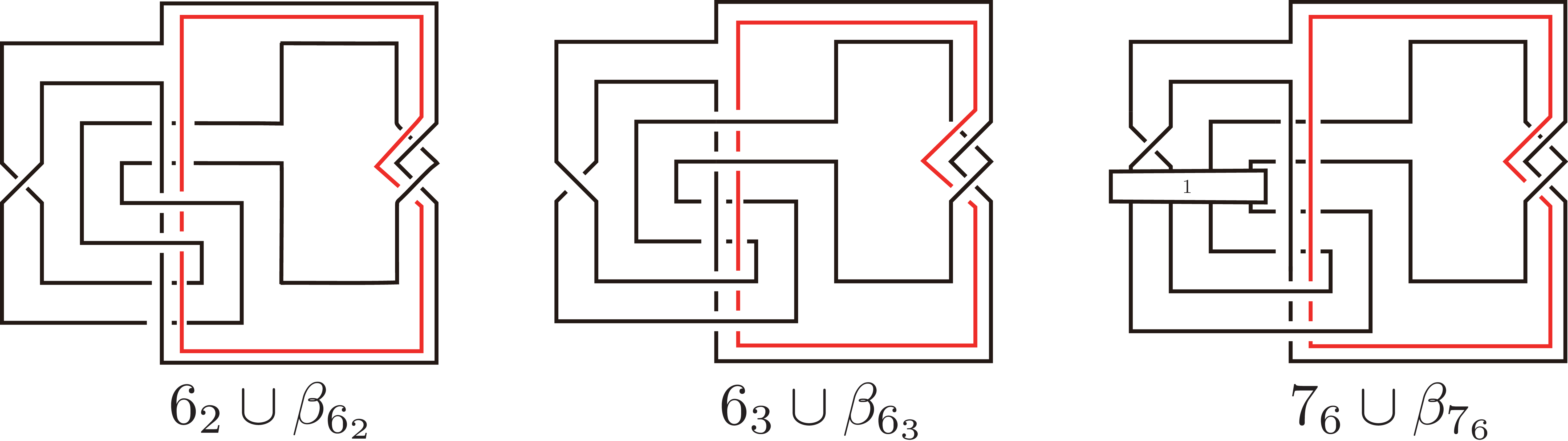}
\caption{(color online) 2-component links $6_2 \cup \beta_{6_2}$, $6_3 \cup \beta_{6_3}$ and $7_6 \cup \beta_{7_6}$}
\label{6_2}
\end{figure}
\begin{figure}[h!]
\centering
\includegraphics[scale=0.1]{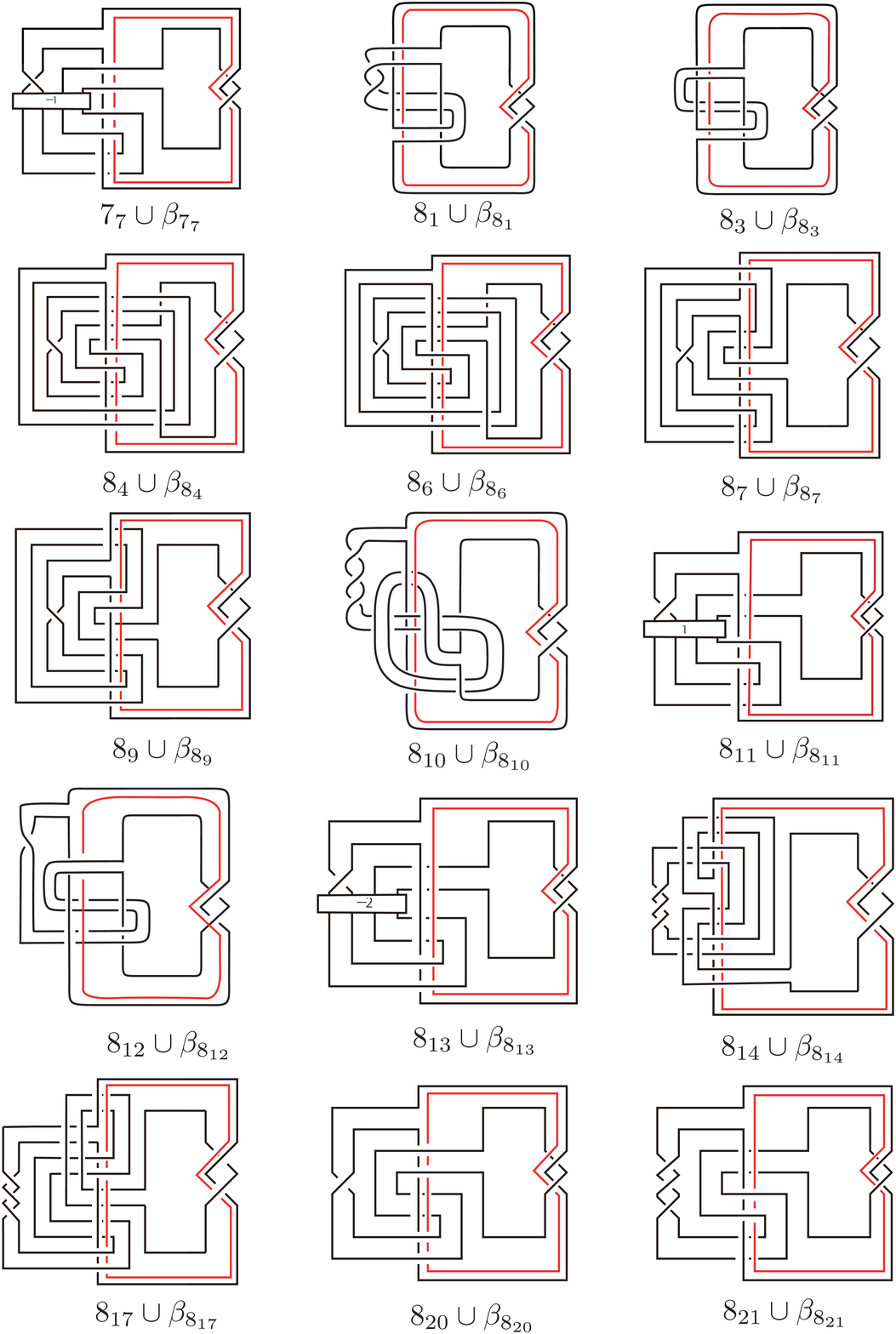}
\caption{(color online) 2-component links $7_{7} \cup \beta_{7_7}$, 
$8_{1} \cup \beta_{8_{1}}$, 
$8_{3} \cup \beta_{8_{3}}$, 
$8_{4} \cup \beta_{8_{4}}$, 
$8_{6} \cup \beta_{8_{6}}$, 
$8_{7} \cup \beta_{8_{7}}$, 
$8_{9} \cup \beta_{8_{9}}$, 
$8_{10} \cup \beta_{8_{10}}$, 
$8_{11} \cup \beta_{8_{11}}$, 
$8_{12} \cup \beta_{8_{12}}$, 
$8_{13} \cup \beta_{8_{13}}$, 
$8_{14} \cup \beta_{8_{14}}$, 
$8_{17} \cup \beta_{8_{17}}$, 
$8_{20} \cup \beta_{8_{20}}$ and
$8_{21} \cup \beta_{8_{21}}$.
}
\label{7_7}
\end{figure}

We are ready to prove Theorem~\ref{thm:8}.
\begin{proof}[Proof of Theorem~\ref{thm:8}]
By Lemma~\ref{lem:dualizable}, the  knots
$ 6_2$, $6_3$, $7_6$, $7_7$, $8_1$, $8_3$,  $8_4$, $8_6$,  $8_7$,  $8_9$,
$8_{10}$, $8_{11}$, $8_{12}$, $8_{13}$, $8_{14}$, $8_{17}$, $8_{20}$ and $8_{21}$ satisfy 
BM-condition.
Therefore, these knots have infinitely many non-characterizing slopes  by Theorem~\ref{thm:BM}.
\end{proof}


%
\section{Triviality of annulus twists and  application to knot theory}\label{sec:trivial}
In this  section,  
we introduce the notion of 
trivial  annulus twists and investigate a sufficient condition for an annulus twist to be trivial.
As a byproduct, we  obtain a new method to construct 
equivalent knots in a 3-manifold which might be non-isotopic.
Throughout this section, we only consider $0$-surgery on knots.

\subsection{Triviality of the  $n$-fold annulus twist}
We introduce the notion of ``trivial" for the  $n$-fold annulus twist along an annulus presentation.
\par 
Let $K$ be a knot with an annulus presentation $(A,b)$
and  $A^{n}(K)$ the  knot  as in Section \ref{sub:Annulus twist}.
We denote by $\alpha_{A^{n}(K)} \subset \mathbf{S}^3$ a meridian of $A^{n}(K)$ and by $L_{A^{n}(K)}$
  the surgery dual to $A^{n}(K)$ in  $M_{A^{n}(K)}(0)$.
 We can regard $\alpha_{A^{n}(K)}$ as a curve in $M_{A^{n}(K)}(0)$ since  we have
\begin{align*}
\alpha_{A^{n}(K)} \subset \mathbf{S}^3\setminus \nu(A^{n}(K))= M_{A^{n}(K)}(0)\setminus \nu(L_{A^{n}(K)}).\label{eq:identification}
\end{align*}
\begin{defn}
The $n$-fold annulus twist along $(A,b)$ is {\it trivial} if $\phi_{n}(\alpha_{K})$ is isotopic to $\alpha_{A^{n}(K)}$ in $M_{A^{n}(K)}(0)$, where $\phi_{n}\colon M_{K}(0) \rightarrow M_{A^{n}(K)}(0)$ is the $n$-th Osoinach-Teragaito's homeomorphism. 
\end{defn}
\par
The following lemma justifies the above definition.
\begin{lem}\label{lem:trivial1}
Let $K$ be a knot with an annulus presentation $(A,b)$. 
Suppose that the $n$-fold annulus twist along $(A,b)$ is trivial. 
Then we obtain $K=A^{n}(K)$. 
\end{lem}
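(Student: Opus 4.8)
The plan is to unwind the definition of triviality and show that it forces the two knots $K$ and $A^{n}(K)$ to be related by a homeomorphism of $\mathbf{S}^3$ that preserves orientation. The starting point is the $n$-th Osoinach--Teragaito homeomorphism $\phi_{n}\colon M_{K}(0)\to M_{A^{n}(K)}(0)$ from Theorem~\ref{thm:Osoinach}. Recall that a knot $J\subset\mathbf{S}^3$ is recovered from the pair $(M_{J}(0),\alpha_{J})$, where $\alpha_{J}$ is a meridian of $J$: filling in the meridian $\alpha_{J}$ (equivalently, doing the appropriate surgery on $\alpha_{J}$ viewed as a curve in $M_{J}(0)$) undoes the original $0$-surgery and returns $(\mathbf{S}^3,J)$, with $J$ appearing as the surgery dual $L_{J}$. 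So the pair $(\mathbf{S}^3,J)$ is encoded by the triple $(M_{J}(0),L_{J},\alpha_{J})$.

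First I would make this recovery precise for both $K$ and $A^{n}(K)$: the knot $K$ together with its ambient $\mathbf{S}^3$ is reconstructed from $(M_{K}(0),L_{K},\alpha_{K})$, and likewise $A^{n}(K)$ is reconstructed from $(M_{A^{n}(K)}(0),L_{A^{n}(K)},\alpha_{A^{n}(K)})$. Here $L_{K}$ and $L_{A^{n}(K)}$ are the respective surgery duals, and the identification $\mathbf{S}^3\setminus\nu(A^{n}(K))=M_{A^{n}(K)}(0)\setminus\nu(L_{A^{n}(K)})$ from the section preamble lets me regard $\alpha_{A^{n}(K)}$ as a curve in the $0$-surgered manifold. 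Next I would feed the triviality hypothesis into $\phi_{n}$: by Theorem~\ref{thm:Osoinach} the homeomorphism $\phi_{n}$ carries the surgery dual $L_{K}$ to the surgery dual $L_{A^{n}(K)}$ (this is what ``$0$-surgery yields the same manifold via $\phi_{n}$'' means at the level of dual knots), and by the definition of trivial it carries $\alpha_{K}$ to a curve isotopic to $\alpha_{A^{n}(K)}$ in $M_{A^{n}(K)}(0)$.

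Assembling these, $\phi_{n}$ is an orientation-preserving homeomorphism taking the triple $(M_{K}(0),L_{K},\alpha_{K})$ to a triple isotopic to $(M_{A^{n}(K)}(0),L_{A^{n}(K)},\alpha_{A^{n}(K)})$. Performing the meridian-filling on both sides and using that $\phi_{n}$ respects the framings/meridians involved, the homeomorphism descends to an orientation-preserving homeomorphism of $\mathbf{S}^3$ carrying $K$ to $A^{n}(K)$. Since orientation-preserving homeomorphisms of $\mathbf{S}^3$ take a knot to an isotopic knot, I conclude $K=A^{n}(K)$ as knots, which is the claim.

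The main obstacle I expect is verifying that $\phi_{n}$ genuinely sends $L_{K}$ to $L_{A^{n}(K)}$ and is compatible with the meridians, rather than merely being an abstract homeomorphism of the $0$-surgered manifolds. Theorem~\ref{thm:Osoinach} asserts the existence of $\phi_{n}$ and records its explicit form (Figure~\ref{figure:Osoinach-homeo}), so the careful point is to read off from that construction that the dual knots correspond and that the isotopy of meridians guaranteed by triviality can be used to fill in the same Dehn filling on both ends. Once the bookkeeping of which curve plays the role of the meridian (and the preservation of the $\infty$-framing under filling) is pinned down, the rest is formal: isotoping $\phi_{n}(\alpha_{K})$ to $\alpha_{A^{n}(K)}$ before filling, and noting that isotopic filling curves give homeomorphic filled pairs, yields the desired identification of $(\mathbf{S}^3,K)$ with $(\mathbf{S}^3,A^{n}(K))$.
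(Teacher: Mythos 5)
Your argument contains a false step at its core. You claim that Theorem~\ref{thm:Osoinach} itself guarantees that $\phi_{n}$ carries the surgery dual $L_{K}$ to the surgery dual $L_{A^{n}(K)}$, ``because this is what 0-surgery yields the same manifold means at the level of dual knots.'' It does not: the theorem only provides a homeomorphism of the surgered manifolds, and an abstract homeomorphism $M_{K}(0)\rightarrow M_{A^{n}(K)}(0)$ need not match up surgery duals. Indeed, if this claim were true, then restricting $\phi_{n}$ to the exteriors of the duals would give an orientation-preserving homeomorphism $\mathbf{S}^3\setminus \nu(K)\cong \mathbf{S}^3\setminus \nu(A^{n}(K))$ for \emph{every} knot with an annulus presentation and every $n$, so the triviality hypothesis of the lemma would be vacuous and $K=A^{n}(K)$ would always hold --- contradicting the constructions of Osoinach \cite{Osoinach} and of \cite{AJLO}, whose whole point is that annulus twists produce \emph{distinct} knots with the same $0$-surgery. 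The correct source of the dual-to-dual correspondence is the hypothesis itself: the meridian $\alpha_{J}$ is isotopic in $M_{J}(0)$ to the dual $L_{J}$ (it is a longitude of the filling solid torus, meeting the filling disk once), so triviality gives $\phi_{n}(L_{K})\simeq\phi_{n}(\alpha_{K})\simeq\alpha_{A^{n}(K)}\simeq L_{A^{n}(K)}$. This isotopy $\alpha_{J}\simeq L_{J}$ is exactly the observation the paper's proof makes explicit, and your write-up never states it.

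There is a second gap in your concluding step. To ``perform the meridian-filling on both sides'' and descend $\phi_{n}$ to a homeomorphism of pairs $(\mathbf{S}^3,K)\rightarrow(\mathbf{S}^3,A^{n}(K))$, you need $\phi_{n}$ to take the filling slope on $\partial\nu(\alpha_{K})$ that recovers $\mathbf{S}^3$ to the corresponding slope on $\partial\nu(\alpha_{A^{n}(K)})$. You defer this as ``bookkeeping,'' but it is not formal: all slopes meeting the relevant homology class once yield homology spheres, and knowing that a filling of a knot exterior produces $\mathbf{S}^3$ pins down the slope only via the Gordon--Luecke theorem. In other words, the step you postpone is essentially the Knot Complement Theorem, which your proposal never invokes. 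The paper's proof avoids the filling entirely: using triviality and the isotopy $\alpha\simeq L$ it exhibits an orientation-preserving homeomorphism of exteriors $\mathbf{S}^3\setminus\nu(K)=M_{K}(0)\setminus\nu(L_{K})\cong M_{K}(0)\setminus\nu(\alpha_{K})\cong M_{A^{n}(K)}(0)\setminus\nu(\alpha_{A^{n}(K)})\cong M_{A^{n}(K)}(0)\setminus\nu(L_{A^{n}(K)})=\mathbf{S}^3\setminus\nu(A^{n}(K))$, and then concludes $K=A^{n}(K)$ directly from the Knot Complement Theorem \cite{Gordon-Luecke}. If you repair your first step as above, you should likewise stop at the homeomorphism of complements and cite \cite{Gordon-Luecke}; the pair-homeomorphism-plus-filling route cannot be completed without it anyway.
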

\begin{proof}
Since the $n$-fold annulus twist along $(A,b)$ is trivial, $\phi_{n}(\alpha_{K})$ is isotopic to $\alpha_{A^{n}(K)}$ in $M_{A^{n}(K)}(0)$. 
Note that $\alpha_{A^{n}(K)}$ is isotopic to $L_{A^{n}(K)}$ in $M_{A^{n}(K)}(0)$. 
Hence, we have 
\begin{align*}
\mathbf{S}^3\setminus \nu(K)
&=M_{K}(0)\setminus \nu(L_{K})\\
&\cong M_{K}(0)\setminus \nu(\alpha_{K})\\
&\cong \phi_{n}(M_{K}(0))\setminus \phi_{n}(\nu(\alpha_{K}))\\
&\cong M_{A^{n}(K)}(0)\setminus \nu(\alpha_{A^{n}(K)})\\
&\cong M_{A^{n}(K)}(0)\setminus \nu(L_{A^{n}(K)})\\
&=\mathbf{S}^3\setminus \nu(A^{n}(K)).
\end{align*}
By the Knot Complement Theorem~\cite{Gordon-Luecke}, we obtain $K=A^{n}(K)$. 
\end{proof}
The following lemma gives us many examples of 
trivial annulus twists. 

\begin{lem}\label{lem:trivial2}
Let $K$ be a knot with a special annulus presentation $(A,b)$. 
Suppose that there is some disk $D$ such that $\partial D=c_1$ and $\operatorname{Int} D \cap b=\emptyset$ (see Figure~\ref{figure:trivial-twist1}).  
Then, we obtain the following. 
\begin{itemize}
\item If $(A, b)$ is negative, the $(-1)$-fold annulus twist along $(A,b)$ is trivial. 
\item If $(A, b)$ is positive, the $(+1)$-fold annulus twist along $(A,b)$ is trivial. 
\end{itemize}

\end{lem}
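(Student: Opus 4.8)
The plan is to prove the two statements by reducing them to the general triviality criterion of Lemma~\ref{lem:trivial1}, i.e.\ to verifying that $\phi_{n}(\alpha_K)$ is isotopic to $\alpha_{A^{n}(K)}$ in $M_{A^{n}(K)}(0)$ for the relevant value $n=\mp 1$. Since Lemma~\ref{lem:trivial1} already converts the isotopy of meridians into the equality $K=A^{n}(K)$, all the work lies in establishing that single isotopy, so I would aim to make the disk hypothesis do exactly that job. Because the positive and negative cases are mirror images of one another (recall that $K$ has an annulus presentation if and only if $\overline{K}$ does, and the positive/negative distinction is exactly the Hopf-band mirror), I would prove the negative case in full and then obtain the positive case by taking mirror images, which swaps the sign of the Hopf band and correspondingly the sign of the twist, sending $(-1)$-fold to $(+1)$-fold.

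First I would set up the geometric picture: I would use the disk $D$ with $\partial D=c_1$ and $\operatorname{Int} D\cap b=\emptyset$ to understand the shrunken annulus $A'$ and its core curves $c_1',c_2'$ relative to $K$. The key point of a \emph{special} annulus presentation is that $A$ is a Hopf band, so $\operatorname{lk}(c_1,c_2)=\pm 1$; in the negative case the $(-1)$-fold annulus twist is $(\operatorname{lk}(c_1,c_2)+1/(-1),\operatorname{lk}(c_1,c_2)-1/(-1))$-surgery, and the existence of $D$ means that $c_1$ bounds a disk meeting the band trivially, so one of the two surgery curves bounds an embedded disk in the complement of the relevant portion of $K$. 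I would use this disk to realize the surgery as (essentially) a handle slide / blow-down that is tracked explicitly by Teragaito's surgery description of $\phi_n$ in Figure~\ref{figure:Osoinach-homeo}.

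Next I would carry out the central computation: trace the meridian $\alpha_K$ of $K$ through the homeomorphism $\phi_{-1}$, using the explicit picture of $\phi_n$ from Theorem~\ref{thm:Osoinach}. The claim is that, \emph{because} $\operatorname{Int} D$ misses the band $b$, the image $\phi_{-1}(\alpha_K)$ can be isotoped across $D$ (regarded now in $M_{A^{-1}(K)}(0)$) until it coincides with the meridian $\alpha_{A^{-1}(K)}$ of the twisted knot. Concretely, I would show that the disk $D$ persists, after the annulus twist, as a disk exhibiting the required isotopy of meridians; the hypothesis $\operatorname{Int} D\cap b=\emptyset$ is what guarantees that the band data of the annulus presentation is not disturbed when one slides across $D$, so that $\phi_{-1}$ sends the meridian of $K$ to the meridian of $A^{-1}(K)$ rather than to some nontrivial curve.

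The main obstacle I expect is precisely this last isotopy: making rigorous, from the surgery diagram of $\phi_n$, that the disk $D$ converts $\phi_{-1}(\alpha_K)$ into $\alpha_{A^{-1}(K)}$, rather than merely verifying it frame-by-frame in a picture. In particular I would have to confirm that the sign conventions line up, namely that it is the $(-1)$-fold (not the $(+1)$-fold) twist that is trivialized by a disk bounded by $c_1$ in the negative case, which amounts to checking that the $1/n$-surgery coefficient with the correct sign cancels the framing contributed by $D$. Once the negative case is pinned down with the correct sign, the positive case follows formally by mirroring, so the entire difficulty is concentrated in the explicit diagrammatic isotopy for one sign.
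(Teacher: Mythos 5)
Your proposal takes essentially the same route as the paper: the paper also proves triviality directly from the definition, tracing a meridian through the explicit Osoinach--Teragaito homeomorphism (in the paper, the backward image $(\phi_{-1})^{-1}(\alpha_{A^{-1}(K)})$ rather than the forward image $\phi_{-1}(\alpha_K)$, which is equivalent) and using the disk $D$ --- after normalizing so that $\operatorname{Int} D \cap K$ is a single point --- to build the isotopy carrying it to the other meridian. The only cosmetic difference is that the paper disposes of the positive case ``similarly'' rather than by your mirror-image argument, which is an equally valid shortcut.
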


\begin{proof}
We can assume that $\operatorname{Int} D \cap K$ consists of a single point as in Figure~\ref{figure:trivial-twist1}.
Suppose that $(A, b)$ is negative. 
Then, $(\phi_{-1})^{-1}\colon M_{A^{-1}(K)}(0)\rightarrow M_{K}(0)$ sends $\alpha_{A^{-1}(K)}$ to $\alpha_{K}$ (see Figure~\ref{figure:trivial-twist2}). 
Indeed, by a small isotopy, $(\phi_{-1})^{-1}(\alpha_{A^{-1}(K)})$ is isotopic to a curve $\ell \subset D$
and it is isotopic to $\alpha_{K}$ since we can find an annulus bounded by the curve $\ell$ and $\alpha_{K}$ by using the given disk $D$.
In the case   where $(A, b)$ is positive,  
we can prove that the $(+1)$-fold annulus twist along $(A,b)$ is trivial similarly. 
\end{proof}
\begin{figure}[h!]
\centering
\includegraphics[scale=0.75]{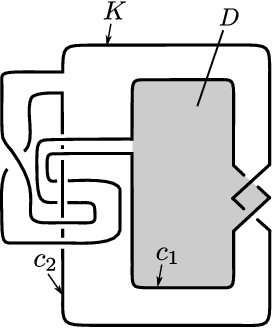}
\caption{A disk $D$ satisfying $\partial D=c_1$ and $\operatorname{Int} D \cap b=\emptyset$}
\label{figure:trivial-twist1}
\end{figure}
\begin{figure}[h!]
\centering
\includegraphics[scale=0.8]{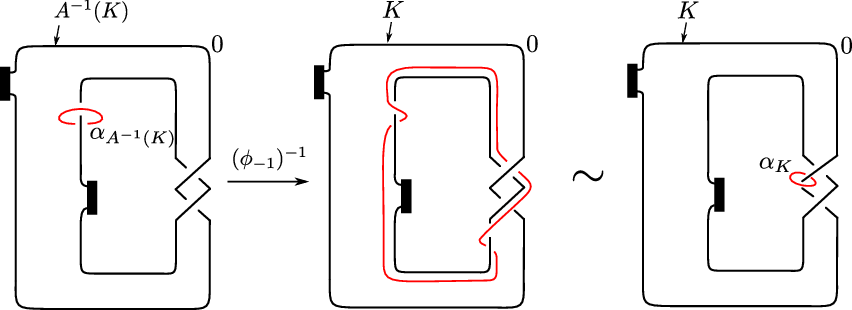}
\caption{(color online) Proof of Lemma~\ref{lem:trivial2}}
\label{figure:trivial-twist2}
\end{figure}
\par
As a corollary of Lemma~\ref{lem:trivial2}, 
we obtain the following, which gives an alternative proof of \cite[Theorem~5.1]{tagami-pre1} for the unoriented case. 
\begin{cor}[{e.g.~\cite[Theorem~5.1]{tagami-pre1}}]\label{cor:trivial2}
Let $K$ be a knot with a special annulus presentation $(A,b)$. 
Suppose that there is some disk $D$ such that $\partial D=c_1$ and $\operatorname{Int} D \cap b=\emptyset$ (see Figure~\ref{figure:trivial-twist1}).  
Then, we obtain the following. 
\begin{itemize}
\item If $(A, b)$ is negative, then $K=A^{-1}(K)$. 
\item If $(A, b)$ is positive, then  $K=A^{+1}(K)$. 
\end{itemize}
\end{cor}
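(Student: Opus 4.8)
The plan is to deduce the statement directly from the two lemmas immediately preceding it, since they already carry all of the topological content. First I would feed the hypotheses into Lemma~\ref{lem:trivial2}: given the special annulus presentation $(A,b)$ and a disk $D$ with $\partial D=c_1$ and $\operatorname{Int} D \cap b=\emptyset$, that lemma tells us that the $(-1)$-fold annulus twist along $(A,b)$ is trivial when $(A,b)$ is negative, and that the $(+1)$-fold annulus twist is trivial when $(A,b)$ is positive. So the only thing left to supply is a passage from ``trivial'' to the knot-level equality.

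That passage is exactly Lemma~\ref{lem:trivial1}, which asserts that whenever the $n$-fold annulus twist along $(A,b)$ is trivial we have $K=A^{n}(K)$. Thus in the negative case I would apply Lemma~\ref{lem:trivial1} with $n=-1$ to conclude $K=A^{-1}(K)$, and in the positive case with $n=+1$ to conclude $K=A^{+1}(K)$. These are precisely the two bullet points in the statement, so the argument is complete.

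Because the corollary is a formal combination of Lemmas~\ref{lem:trivial2} and~\ref{lem:trivial1}, there is no genuine obstacle at this level; the real work lives in those two lemmas. Were I to prove the result without invoking them, the delicate step would be the isotopy inside Lemma~\ref{lem:trivial2}: identifying $(\phi_{-1})^{-1}(\alpha_{A^{-1}(K)})$ with a curve $\ell\subset D$ and then using the disk $D$ to build an annulus between $\ell$ and the meridian $\alpha_{K}$, so that $\phi_{-1}$ is seen to carry meridian to meridian. The subsequent appeal to the Knot Complement Theorem of Gordon--Luecke (the step packaged in Lemma~\ref{lem:trivial1}) is what upgrades the resulting homeomorphism of complements to the actual equality of the knots $K$ and $A^{\pm 1}(K)$.
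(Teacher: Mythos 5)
Your proposal is correct and matches the paper's own derivation: the paper states this result as an immediate corollary of Lemma~\ref{lem:trivial2}, with Lemma~\ref{lem:trivial1} supplying the passage from triviality of the annulus twist to the equality $K=A^{\pm 1}(K)$, exactly as you describe. Your closing remarks about where the real work lies (the isotopy via the disk $D$ inside Lemma~\ref{lem:trivial2} and the Gordon--Luecke Knot Complement Theorem inside Lemma~\ref{lem:trivial1}) are also accurate.
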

\begin{ex}
Let $(A, b)$ be the special annulus presentation of $6_3$ as in Figure~\ref{figure:annulus-pre}.
Then  we have $6_3=A^{-1}(6_3)$ by Corollary \ref{cor:trivial2}.
\end{ex}
\subsection{Equivalent knots in a 3-manifold which might be non-isotopic}\label{subsec:equivalent-knots}
In this subsection, we consider whether the converse of Lemma~\ref{lem:trivial1} holds or not,
which motivates the following  question.
\begin{question}
Let $K$ be a knot with an annulus presentation $(A,b)$.
Fix  an integer  $n$.
If we have $K=A^{n}(K)$, 
 is the $n$-fold annulus twist along $(A,b)$ trivial? 
More strongly, 
is the $n$-th Osoinach-Teragaito's homeomorphism
\[  \phi_{n}\colon M_{K}(0) \rightarrow  M_{A^{n}(K)}(0)=M_{K}(0)\]
 is isotopic to the identity?
\end{question}
We do not have any counterexamples to this question at the time of writing. 
Potential counterexamples are constructed as follows:
Let $K$ be a knot with  an annulus presentation  $(A,b)$. 
Suppose that $\operatorname{Int}A\cap b =\emptyset$. 
Then, for any integer $n$,  we have $K=A^{n}(K)$.
However, we do not know whether the $n$-fold annulus twist along $(A,b)$ is trivial or not.
For example, the knot $8_{1}$ has such an annulus presentation, see Figure~\ref{equivalent-knots}.
If the $n$-fold annulus twist along the annulus presentation are not trivial, 
then the two knots $\alpha_{8_1}$ and $\phi(\alpha_{8_1})$ in $M_{8_1}(0)$ are not isotopic although they are equivalent. 
%
\begin{figure}[h!]
\begin{center}
\includegraphics[scale=0.8]{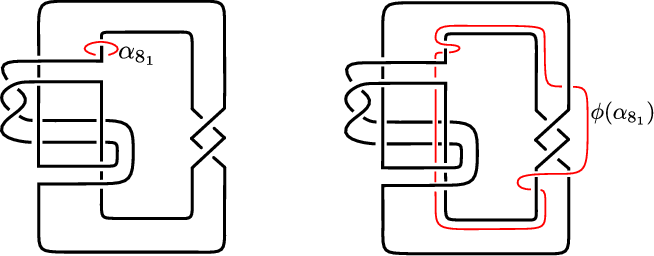}
\end{center}
\caption{(color online) Equivalent knots $\alpha_{8_{1}}$ and $\phi(\alpha_{8_{1}})$ in the 3-manifold  $M_{8_1}(0)$}
\label{equivalent-knots}
\end{figure}
%
%
%
%
\section{Tabulation of annulus presentations of knots}\label{sec:table}
In Section~\ref{sec:main-thm2}, we proved that 
some knots up to 8-crossings have special annulus presentations (see Lemma~\ref{lem:annulus-pre}).
In this section, we give four obstructions for knots to have (special) annulus presentations.
As an application, we prove the following.
\begin{thm}\label{thm:table}
The following knots do not have special annulus presentations:
\[ 5_{1},\  7_{1},\ 7_{3},\ 7_{4},\ 7_{5},\ 8_{2},\
 8_{5},\  8_{8},\  8_{15},\  8_{16},\ 8_{18},\ 8_{19}.\] \end{thm}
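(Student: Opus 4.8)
The plan is to read off, from the geometry of a special annulus presentation, a short list of necessary conditions on classical invariants, and then to check that each of the twelve knots violates at least one of them. The starting point is the characterization already recorded in Section~\ref{sec:annulus}: a knot $K$ admits a positive (resp.\ negative) special annulus presentation if and only if $K$ is obtained from the positive (resp.\ negative) Hopf link $H$ by a single band surgery, and this band surgery is \emph{coherent} because $A\cup b(I\times I)$ is an orientable surface. Equivalently, $K$ bounds in $\mathbf{S}^3$ a ribbon-immersed once-punctured torus whose core annulus is a Hopf band (a band attached to $A$ raises $\chi$ from $0$ to $-1$, giving genus one). Every obstruction I would use is extracted from one of these two descriptions, and as a sanity check I would confirm that all the knots of Lemma~\ref{lem:annulus-pre} satisfy them.

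The workhorse is the signature. A single coherent band move changes the signature by at most one, and the Hopf link satisfies $|\sigma(H)|=1$; hence $|\sigma(K)|\le 2$, and since $K$ is a knot its signature lies in $\{-2,0,2\}$. Tabulating signatures, this alone eliminates $5_1,\,7_1,\,7_3,\,7_5,\,8_2,\,8_5,\,8_{15}$ and $8_{19}$, every one of which has $|\sigma|\ge 4$. This first obstruction thus disposes of eight of the twelve knots in a single stroke, and it is the cleanest to state and apply.

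The four survivors $7_4,\,8_8,\,8_{16},\,8_{18}$ all have $|\sigma|\le 2$, so sharper tools are needed. Pushing the ribbon-immersed genus-one surface into $B^4$ resolves the ribbon singularities and exhibits $K$ as the boundary of an embedded genus-one surface, giving the smooth four-genus bound $g_4(K)\le 1$ (equivalently $|s(K)|\le 2$ and $|\tau(K)|\le 1$); this removes any survivor whose four-genus exceeds one. For the knots that pass even this test — those of four-genus one — I would appeal to invariants that are not determined by the Seifert form: the Montesinos trick shows that a coherent band surgery from $H$ lifts to a single Dehn surgery on a knot in $\Sigma_2(H)=\mathbf{RP}^3$, so $\Sigma_2(K)$ must arise in this way, and there is in addition a skein/Jones-polynomial obstruction tailored to band surgeries along the Hopf link. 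Each of these is computable for an individual knot.

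The main obstacle is precisely the low-complexity cases among $7_4,\,8_8,\,8_{16},\,8_{18}$ of four-genus one: they pass both the signature and the slice-genus tests, and each carries a genus-one Seifert surface whose Alexander data is a priori compatible with a Hopf-band Seifert matrix, so no single classical polynomial obviously excludes them. The real content of the theorem is therefore to produce obstructions that see more than the $S$-equivalence class of the Seifert form — for these knots I expect the decisive input to be the four-dimensional or branched-cover obstruction rather than any one-variable polynomial — and then to verify, knot by knot, that the required presentation of $K$ as a band surgery on $H$ genuinely fails to exist. Packaging these finer tests together with the signature and slice-genus bounds into the four clean obstructions advertised in this section, and completing the resulting case check, is where the work lies.
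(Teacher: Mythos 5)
Your signature argument correctly disposes of eight of the twelve knots: a coherent band surgery changes the signature by at most one, $|\sigma(H_{\pm})|=1$, and $5_1,7_1,7_3,7_5,8_2,8_5,8_{15},8_{19}$ all have $|\sigma|\geq 4$. This is a legitimate alternative to the paper's first step, which instead notes that any annulus presentation forces $g_4(K)\leq 1$ (Theorem~\ref{thm:4-ball}) and quotes $g_4>1$ for those eight knots (Corollary~\ref{cor:4-ball}); the two arguments have essentially the same strength here, since $|\sigma|/2\leq g_4$.

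The genuine gap is the four remaining knots $7_4$, $8_8$, $8_{16}$, $8_{18}$, which you explicitly leave unresolved (``\ldots is where the work lies''). For these, nothing you have actually deployed can succeed: $8_8$ is smoothly slice and $8_{18}$ has vanishing signature, so signature, four-genus, Rasmussen-type concordance bounds, and anything determined by the Seifert form fail to exclude at least one sign of Hopf band for each of them (and both signs for $8_8$ and $8_{18}$). The paper's treatment of exactly these cases rests on two concrete evaluation obstructions that your proposal does not supply: (i) the Jones polynomial at $\omega=e^{\pi i/3}$ --- using the Lickorish--Millett formula $V_L(\omega)=\pm i^{c-1}(i\sqrt{3})^{d}$ (Theorem~\ref{thm:LM}), a skein computation across the band shows that $V_K(\omega)=-i\sqrt{3}$ forbids positive special annulus presentations (Theorem~\ref{thm1}, applied to $7_4$, whose negative case is excluded by $s(7_4)=2$ via Theorem~\ref{thm:concordance}) and that $V_K(\omega)=\pm 3$ forbids both signs (Theorem~\ref{thm2}, applied to $8_{18}$); and (ii) Kanenobu's theorem that a single band surgery changes $Q_L\bigl(\tfrac{\sqrt{5}-1}{2}\bigr)$ only by a factor in $\{\pm 1,\sqrt{5}^{\pm 1}\}$, whence $Q_K\bigl(\tfrac{\sqrt{5}-1}{2}\bigr)=\sqrt{5}$ is impossible for a knot with a special annulus presentation (Theorem~\ref{thm:Q}, applied to $8_8$ and $8_{16}$). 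Your branched-double-cover/Montesinos suggestion points in the right general direction --- both (i) and (ii) are computable shadows of $\Sigma_2$-information, via Lickorish--Millett and Rong respectively --- but you never convert it into a checkable criterion, and doing so for, say, the slice knot $8_8$ would require precisely a Kanenobu-type theorem. As it stands, your proposal proves the statement for eight knots and only restates the problem for the other four.
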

\par 
As a summary, we obtain Table~\ref{table1}. 
Here 
\begin{itemize}
\item ``$u=1$" means an unknotting number one knot and it has a special annulus presentation by Lemma~\ref{lem:unknotting}.
\item ``Yes" means a knot of unknotting number $2$ and with a special annulus presentation (see Figure~\ref{annulus-pre}).
\item ``No" means a knot without any special annulus presentations by Theorem~\ref{thm:table}.
\end{itemize}
\begin{table}[h]
  \begin{tabular}{|c|c||c|c|}
  \hline
    knot & special annulus presentation&knot & special annulus presentation  \\  \hline  \hline
    $3_1$ 		& $u=1$	&$8_5$ 		& No 	\\\hline
    $4_1$ 		& $u=1$	&$8_6$ 		&  Yes\\\hline
    $5_1$ 		& No 	&$8_7$ 		& $u=1$	\\\hline
    $5_2$ 		& $u=1$	&$8_8$ 		& No 	\\\hline
    $6_1$ 		& $u=1$	&$8_9$ 		& $u=1$	\\\hline
    $6_2$ 		& $u=1$	&$8_{10}$ 	&  Yes\\\hline
    $6_3$ 		& $u=1$	&$8_{11}$ 	& $u=1$	\\\hline
    $7_1$ 		& No 	&$8_{12}$ 	& Yes	\\\hline
    $7_2$ 		& $u=1$ &$8_{13}$ 	& $u=1$	\\\hline
    $7_3$ 		& No 	&$8_{14}$	& $u=1$	\\\hline
    $7_4$ 		& No 	&$8_{15}$	& No  	\\\hline
    $7_5$ 		& No 	&$8_{16}$ 	& No  	\\\hline
    $7_6$ 		& $u=1$	&$8_{17}$ 	& $u=1$	\\\hline
    $7_7$ 		& $u=1$	&$8_{18}$ 	& No  	\\\hline
    $8_1$ 		& $u=1$	&$8_{19}$ 	& No  	\\\hline
    $8_2$ 		& No 	&$8_{20}$ 	& $u=1$	\\\hline
    $8_3$ 		& Yes &$8_{21}$ 	& $u=1$	\\\hline
    $8_4$ 		& Yes	&-&-\\\hline
\end{tabular}
  \caption{List of prime knots with/without special annulus presentations}
  \label{table1}
\end{table}
In Table $1$, we only consider whether a prime knot up to 8-crossings has a special annulus presentation or not.
In general, a given knot has  many special annulus presentations.
In Section~\ref{sub:equivalent},  we consider when we should regard two special annulus presentations as the same
annulus presentation.
We introduce the  notion of equivalent annulus presentations.


\subsection{The 4-ball genus obstruction}\label{sub1}
The following theorem implies that  the 4-ball genus is 
an obstruction for knots to have annulus presentations.
\begin{thm}\label{thm:4-ball}
Let $K$ be a knot.
If $K$ has an annulus presentation, 
then the $4$-ball genus of $K$, denoted by $g_{4}(K)$, is less than or  equal to one.
\end{thm}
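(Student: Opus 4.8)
The plan is to understand an annulus presentation as encoding a specific surface that bounds the knot, and then push that surface into the four-ball to control $g_4(K)$. Recall that $(A,b)$ consists of an embedded annulus $A$ together with a band $b(I \times I)$, and that $K$ is recovered as $(\partial A \setminus b(\partial I \times I)) \cup b(I \times \partial I)$. The first step is to build, directly from this data, a Seifert-type surface $F$ in $\mathbf{S}^3$ whose boundary is $K$. Concretely, I would take the annulus $A$ and attach the band $b(I\times I)$ along the two arcs $b(\partial I \times I) \subset \partial A$; the resulting surface has boundary exactly $K$. Since $A$ is an annulus (genus $0$ with two boundary circles) and attaching a single band changes the Euler characteristic by $-1$, the surface $A \cup b(I\times I)$ has $\chi = 0 - 1 = -1$ and a single boundary component, hence is a once-punctured torus, i.e.\ a genus-one Seifert surface for $K$.

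The only subtlety is that $A \cup b(I\times I)$ is not embedded in $\mathbf{S}^3$: by hypothesis the band meets $\operatorname{Int} A$ in ribbon singularities, so this surface is merely an immersed orientable surface. The key step is therefore to resolve these ribbon singularities by pushing the surface into the four-ball $\mathbf{B}^4$. This is the standard observation that a ribbon singularity can be removed by pushing one sheet slightly into the interior of $\mathbf{B}^4$; because the singularities are ribbon (rather than arbitrary clasp) intersections, this can be done without creating new boundary and without changing the genus of the surface. After resolving, I obtain a smoothly, properly embedded orientable surface $F' \subset \mathbf{B}^4$ with $\partial F' = K \subset \mathbf{S}^3 = \partial \mathbf{B}^4$ and of genus one. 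By definition of the smooth four-ball genus, this gives $g_4(K) \le 1$.

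The step I expect to be the main obstacle, and the one deserving the most care, is verifying that the ribbon singularities can indeed be pushed off into $\mathbf{B}^4$ while genuinely keeping the surface embedded and of the correct genus. One must check that each ribbon intersection is of the model form (one sheet passing through the interior of the other along an arc whose preimage in the source surface is a properly embedded arc), and that finger-pushing that arc into the interior of $\mathbf{B}^4$ yields an embedding; this is exactly the local picture underlying the classical fact that ribbon knots bound embedded disks in $\mathbf{B}^4$. The genus is unaffected because pushing into the interior is an ambient isotopy of the source surface, not a surgery on it. Once this local analysis is in place, the Euler-characteristic count from the first paragraph immediately gives the genus bound.

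An alternative, slightly more self-contained route avoids the four-ball entirely: since the surface $A \cup b(I \times I)$ is an immersed genus-one surface with ribbon singularities, its resolution shows $K$ is a \emph{ribbon} (hence slice) modification of a genus-one surface, so one may instead argue that $K$ differs from a genus-one Seifert surface only by ribbon moves and invoke the standard fact that such moves do not increase $g_4$. Either way, the heart of the argument is the same local ribbon-resolution lemma, and the genus bound $g_4(K) \le 1$ follows.
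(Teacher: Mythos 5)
Your proof is correct, but it takes a genuinely different route from the paper's. You work with the immersed surface $A \cup b(I\times I)$ itself: you compute that it is an orientable, once-punctured torus (the Euler characteristic count is right, and the single boundary component is guaranteed because the band must join the two circles of $\partial A$ for $K$ to be connected), and then you resolve the ribbon singularities by pushing the offending sheets into the interior of $\mathbf{B}^4$. This is the classical ``ribbon surface'' push-in argument, and your identification of the needed local model (the singularity arc has one proper preimage arc crossing the band and one interior preimage arc in $\operatorname{Int} A$; one pushes a neighborhood of the interior arc deeper into $\mathbf{B}^4$) is exactly the step that makes it work. The paper avoids the ribbon singularities entirely: it performs a band surgery on $K$ along the \emph{cocore} of the band $b$, which deletes the band and turns $K$ into $\partial A$; the resulting embedded saddle cobordism in $\mathbf{S}^3\times I$, capped off with the embedded annulus $A$ pushed into $\mathbf{B}^4$, is already an embedded orientable surface of genus one with boundary $K$, with no local analysis required. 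So the paper's argument buys brevity and needs no resolution lemma, while yours is the more standard ``ribbon immersion'' viewpoint and constructs the surface directly from the presentation; both yield the same genus bound. Two small caveats on your write-up: calling $A\cup b(I\times I)$ a ``Seifert surface'' before resolving is a misnomer (you correct this yourself), and your closing alternative --- invoking that ``ribbon moves do not increase $g_4$'' --- is circular as stated, since that fact is just a repackaging of the same push-in lemma; it should be dropped rather than offered as an independent route.
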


\begin{proof}
Suppose that  $(A,b)$ is an  annulus presentation   of $K$.
Then, a single band surgery along the cocore of  the band $b$
changes  $K$ into $\partial A$. 
Hence, $K$ bounds an orientable proper surface of genus $1$ in $\mathbf{B}^4$.
This means that $g_{4}(K) \le 1$.
\end{proof}

\begin{cor}\label{cor:4-ball}
The following  knots do not have any annulus presentations: 
\[5_1,\ 7_1,\ 7_3,\ 7_5,\ 8_2,\ 8_5,\ 8_{15},\ 8_{19}.\]
\end{cor}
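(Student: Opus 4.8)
The plan is to use Theorem~\ref{thm:4-ball} as the central tool, combined with tabulated values of the $4$-ball genus for low-crossing knots. The corollary follows immediately once we know that each of the listed knots has $4$-ball genus at least two, since the theorem asserts that any knot admitting an annulus presentation satisfies $g_4(K) \le 1$. Contrapositively, $g_4(K) \ge 2$ rules out the existence of any annulus presentation.

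First I would recall the relevant lower bounds for the $4$-ball genus. For each of the eight knots $5_1$, $7_1$, $7_3$, $7_5$, $8_2$, $8_5$, $8_{15}$, $8_{19}$, the value of $g_4$ is recorded in standard knot tables (for instance in KnotInfo) and equals $2$ in every case. In fact several of these are torus knots or closely related: $5_1$ is the $(2,5)$ torus knot with $g_4 = 2$, and $7_1$ is the $(2,7)$ torus knot with $g_4 = 3$, whose $4$-ball genus is computed by the Milnor conjecture (Kronheimer--Mrowka), or alternatively bounded below by the signature or the $\tau$ and $s$ concordance invariants. For the remaining knots one can invoke the signature bound $|\sigma(K)|/2 \le g_4(K)$ or the slice-torus invariants $\tau$ and $s$, all of which are tabulated and yield the bound $g_4 \ge 2$.

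The key steps, in order, are: cite Theorem~\ref{thm:4-ball} to get the implication ``annulus presentation $\Rightarrow g_4 \le 1$''; state that each of the eight knots has $g_4 \ge 2$, with a reference to the tabulated values (and a remark that these follow from the signature or from $\tau$/$s$); and conclude by contraposition that none of these knots admits an annulus presentation. Since the corollary is phrased for \emph{annulus presentations} rather than \emph{special} annulus presentations, no extra argument about the Hopf band condition is needed here, which keeps the proof clean.

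The main obstacle is not conceptual but bookkeeping: one must ensure that each of the eight listed knots genuinely has $4$-ball genus at least $2$, and in particular that none of them slips through with $g_4 = 1$. The safest route is to cite a uniform lower bound that is easy to verify knot-by-knot, such as $|\sigma(K)| \ge 4$ (equivalently $|\sigma(K)|/2 \ge 2$), which holds for all eight knots and immediately forces $g_4 \ge 2$. I would therefore lead with the signature bound as the verification mechanism, since the signature is elementary to compute from a Seifert matrix and is robustly tabulated, thereby avoiding any reliance on the deeper gauge-theoretic or $s$-invariant machinery except where (as for the torus knots) it is the natural source.
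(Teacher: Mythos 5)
Your proposal is correct and follows essentially the same route as the paper: the paper's proof simply notes that $g_4(K) > 1$ for each listed knot (citing KnotInfo) and applies Theorem~\ref{thm:4-ball} by contraposition. Your additional remarks on verifying $g_4 \ge 2$ via the signature bound $|\sigma(K)|/2 \le g_4(K)$ (which indeed holds, as $|\sigma| \ge 4$ for all eight knots) are sound but go beyond what the paper records.
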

\begin{proof}
Let $K$ be one of the above  knots.
Then $g_4(K)>1$, see KnotInfo \cite{KnotInfo}. By Theorem~\ref{thm:4-ball},
$K$ does not have  any annulus presentations.
\end{proof}
In general, it is a subtle question 
whether a knot $K$ with $g_{4}(K) \le 1$ has (special) annulus presentations or not.
%
\subsection{The concordance obstruction}\label{sub2}
Let $\nu$ be an integer-valued concordance invariant of oriented links satisfying
\begin{align*}
|\nu(L)-\nu(L')|\leq -\chi(S)\  \text{and} \ \nu(H_{\pm})=\pm 1, 
\end{align*}
where $S$ is a concordance between two links $L$ and $L'$, and $H_{+}$ (resp.~$H_{-}$) is the positive 
(resp.~negative) Hopf link. 
For example, the Rasmussen invariant $s$ satisfies this condition,
and we will use this invariant in the proof of Corollary 7.7.
The following theorem implies that $\nu$  is
an obstruction for knots to have (negative)  special annulus presentations.
\begin{thm}\label{thm:concordance}
Let $K$ be an oriented knot.
If $K$ has a negative special annulus presentation, then 
\[ -2 \le \nu(K) \le 0.\]
In particular, if $\nu(K)=2$, then $K$ does not have any negative special annulus presentations.
\end{thm}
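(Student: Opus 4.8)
The plan is to realize a negative special annulus presentation as a single band surgery on the negative Hopf link and to feed the resulting saddle cobordism into the defining inequality for $\nu$.

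First I would invoke the observation recorded just after the definition of special annulus presentations: a knot $K$ has a negative special annulus presentation $(A,b)$ if and only if $K$ is obtained from the negative Hopf link $H_{-}$ by a single band surgery, namely the surgery along $b$ that turns $\partial A$ into $K$. This band surgery is an elementary saddle move merging the two components of $\partial A$ into the single component $K$, so the associated cobordism $S$ from $H_{-}$ to $K$ is connected with $\chi(S)=-1$.

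The step I expect to be the main obstacle is fixing the orientation so that $S$ is an \emph{oriented} cobordism and $\nu(H_{-})=-1$ is the value that enters. The defining requirement that $A\cup b(I\times I)$ be orientable forces $b$ to be attached coherently with the orientation that $A$ induces on $\partial A=c_{1}\cup c_{2}$; this is the Seifert orientation, for which the negative Hopf band has linking number $-1$. With this orientation the saddle is orientation-coherent, so $S$ is a genuine oriented cobordism from $H_{-}$ to $K$. I would emphasize that this is \emph{not} the parallel orientation of Example~\ref{ex:6_3} (which gives linking number $+1$ and would misleadingly suggest $\nu=+1$); navigating precisely this subtlety is the crux of the argument.

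Finally I would apply the hypothesis on $\nu$. Since $S$ is a cobordism from $H_{-}$ to $K$ with $-\chi(S)=1$, the inequality $|\nu(K)-\nu(H_{-})|\le -\chi(S)$ together with $\nu(H_{-})=-1$ yields $|\nu(K)+1|\le 1$, that is, $-2\le \nu(K)\le 0$. The final assertion is then immediate: if $\nu(K)=2$, the upper bound $\nu(K)\le 0$ is violated, so $K$ can admit no negative special annulus presentation.
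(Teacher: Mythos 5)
Your proof is correct and follows essentially the same route as the paper: both realize the negative special annulus presentation as a single coherent band surgery from the negative Hopf link $H_{-}$ to $K$, observe that the trace is a cobordism $S$ with $-\chi(S)=1$, and apply $|\nu(K)-\nu(H_{-})|\le -\chi(S)$ with $\nu(H_{-})=-1$. Your additional discussion of why orientability of $A\cup b(I\times I)$ forces the Seifert (boundary) orientation, rather than the parallel orientation of Example~\ref{ex:6_3}, is a correct elaboration of a point the paper leaves implicit in the phrase ``by the definition.''
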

\begin{proof}
Suppose that  $K$ has a negative special annulus presentation.
Then, by the definition, $K$ is obtained from the negative Hopf link by a single band surgery. 
Hence, we have 
\[
|\nu(K)-\nu(H_{-})|=|\nu(K)+1|\leq 1. 
\]
That is, $-2 \le \nu(K) \le 0$.
\end{proof}
%
\subsection{The Jones polynomial obstruction}\label{sub3}
The \textit{Jones polynomial} $V_{L}(t)$  is a Laurent polynomial invariant of an oriented link 
$L \subset \mathbf{S}^3$ which is characterized by 
\begin{align} 
V_{U}(t)&=1,\\
t^{-1}V_{L_{+}}( t)-tV_{L_{-}}(t)&=(t^{1/2}-t^{-1/2})V_{L_{0}}(t),\label{eq:jones}
\end{align}
where  the links $L_{+}$, $L_{-}$, $L_{0}$ are identical except for a neighborhood of a point
as shown in Figure~\ref{skein}, for example,  see \cite{Kawauchi}.
\begin{figure}[h!]
\begin{center}
\includegraphics[scale=0.4]{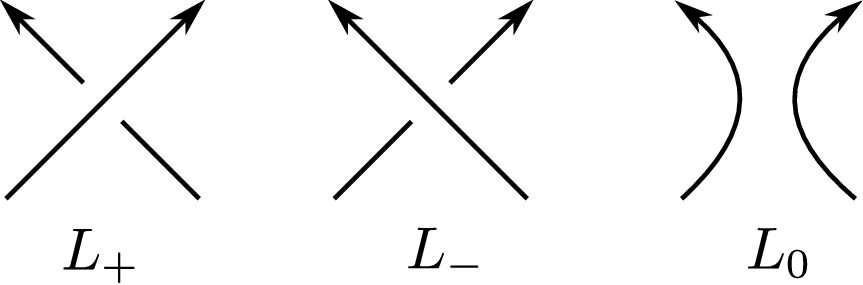}
\end{center}
\caption{A skein triple}
\label{skein}
\end{figure}
{The equation~(\ref{eq:jones})} is called Jones's \textit{skein relation}.
It is well known that the value of $V_{L}(t)$ at a root of unity is related to
topological properties of $L$.
Let $\omega=e^{\frac{\pi i}{3}}$ be the 6th root of unity (not the cube root of unity).
Lickorish and Millett \cite{LM}  described the values of the Jones polynomial  $V_{L}(t)$ at $\omega$
as follows:
\begin{thm}\label{thm:LM}
Let $L$ be an oriented link in $\mathbf{S}^3$, $c$ the number of components of $L$, and $d$ the dimension of 
$H_{1}(\Sigma(L), \mathbf{Z}/\mathbf{3Z})$, where $\Sigma(L)$ is the double  cover of $\mathbf{S}^3$ branched along  $L$.
Then
\[ V_{L}(\omega) = \pm i^{c-1} (i\sqrt{3})^d. \]
\end{thm}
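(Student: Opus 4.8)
The plan is to prove Theorem~\ref{thm:LM} by induction on the number of crossings of a diagram of $L$, using Jones's skein relation~(\ref{eq:jones}) specialized to $t=\omega$. First I would record the relevant arithmetic of $\omega=e^{\pi i/3}$: from $\omega^{1/2}=e^{\pi i/6}$ we get $\omega^{1/2}-\omega^{-1/2}=2i\sin(\pi/6)=i$ and $\omega-\omega^{-1}=2i\sin(\pi/3)=i\sqrt{3}$, while $\omega+\omega^{-1}=1$ and $\omega^{3}=-1$. Thus (\ref{eq:jones}) becomes the linear recursion
\[
\omega^{-1}V_{L_{+}}(\omega)-\omega\,V_{L_{-}}(\omega)=i\,V_{L_{0}}(\omega),
\]
and all of $\omega^{\pm1}$, $i$ and $i\sqrt{3}=\omega-\omega^{-1}$ lie in $\mathbf{Z}[e^{\pi i/6}]$, the ring in which every $V_{L}(\omega)$ lives. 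For the base of the induction I would treat the $c$-component unlink, where $V(\omega)=(-\sqrt{3})^{c-1}$, $\Sigma(L)=\#_{c-1}(S^{1}\times S^{2})$ and hence $d=c-1$; since $i^{c-1}(i\sqrt{3})^{c-1}=(-\sqrt{3})^{c-1}$ the formula holds. I would also fix the split-union reduction: adding a distant unknot multiplies $V(\omega)$ by $-\sqrt{3}$, raises $c$ by $1$, and, because $\Sigma(L\sqcup U)\cong\Sigma(L)\#(S^{1}\times S^{2})$, raises $d$ by $1$; since the claimed value then changes by $i\cdot(i\sqrt{3})=-\sqrt{3}$, the formula is compatible with split unions.

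The inductive step requires tracking how the pair $(c,d)$ behaves across an oriented skein triple $(L_{+},L_{-},L_{0})$. The number of components is easy: $L_{+}$ and $L_{-}$ share the same $c$, while the oriented resolution $L_{0}$ has $c\mp1$ components, so the parity of $c$ always flips on passing to $L_{0}$; this is exactly the source of the factor $i$ in the recursion. The genuinely delicate invariant is $d=\dim_{\mathbf{F}_{3}}H_{1}(\Sigma(L);\mathbf{Z}/3\mathbf{Z})$. Here I would use the Goeritz matrix $G$ of a checkerboard-coloured diagram: a reduced $G$ presents $H_{1}(\Sigma(L);\mathbf{Z})$, so $d$ equals the corank over $\mathbf{F}_{3}$ of $G\bmod 3$. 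A crossing change ($L_{+}\leftrightarrow L_{-}$) alters a single entry of $G$, and the oriented smoothing ($L_{0}$) corresponds to a rank-one contraction of the form; topologically these exhibit $\Sigma(L_{+}),\Sigma(L_{-}),\Sigma(L_{0})$ as a surgery triad, whose Mayer--Vietoris sequence forces $d_{+},d_{-},d_{0}$ to differ pairwise by at most one and restricts them to a short list of admissible configurations.

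With these constraints in hand, the final step is a consistency check: for each admissible configuration of $(c,d_{+})$, $(c,d_{-})$, $(c\mp1,d_{0})$ one verifies that signs can be assigned so that the three values $\pm i^{c-1}(i\sqrt{3})^{d_{\bullet}}$ satisfy the recursion displayed above, this being an identity in $\mathbf{Z}[e^{\pi i/6}]$. The main obstacle, and where I expect nearly all the work to sit, is that this ``two out of three determines the third'' scheme does not by itself control the sign $\pm$: recovering, say, $V_{L_{+}}(\omega)$ from $V_{L_{-}}(\omega)$ and $V_{L_{0}}(\omega)$ needs their actual values with compatible phases, not merely their shapes. I would therefore carry a strengthened inductive hypothesis, proving simultaneously that $|V_{L}(\omega)|^{2}=3^{d}$ and that the argument of $V_{L}(\omega)$ is $i^{\,c-1+d}$ up to sign. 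The modulus statement is the robust half and can be anchored by the classical identity $\mathrm{Col}_{3}(L)=3^{\,d+1}=3\,|V_{L}(\omega)|^{2}$ relating $|V_{L}(\omega)|^{2}$ to the number of Fox $3$-colourings; the phase statement is the subtle half, since pinning the sign amounts to showing that $d$ jumps across the triad in exactly the pattern that matches the factor $i$ in the recursion. Establishing this precise matching between the homology jump of the branched double cover and the phase of $V_{L}(\omega)$ is the crux of the argument.
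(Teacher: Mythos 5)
A preliminary remark on the comparison you were asked for: the paper contains no proof of Theorem~\ref{thm:LM} at all --- it is quoted as a known result of Lickorish and Millett \cite{LM} and used as a black box --- so your attempt can only be judged on its own terms, and on its own terms it is incomplete. Your preparatory work is correct: the arithmetic at $\omega$ (the skein relation becomes $\omega^{-1}V_{L_+}(\omega)-\omega V_{L_-}(\omega)=iV_{L_0}(\omega)$), the base case of unlinks, the split-union compatibility, the parity flip of $c$ under oriented smoothing, and the claim that $d_+,d_-,d_0$ pairwise differ by at most one (all three branched covers are fillings of one manifold with torus boundary, namely the double cover of the crossing-ball complement branched over the tangle, so each $H_1(\cdot;\mathbb{Z}/3\mathbb{Z})$ is a quotient of a fixed group by a single element). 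But the theorem is won or lost exactly in the inductive step, and there you only observe that ``signs can be assigned'' consistently, and then concede that establishing the matching between the homology jump and the phase ``is the crux of the argument.'' That crux is the entire content of the theorem. Concretely: solving the recursion gives $V_{L_+}(\omega)=\omega^{2}V_{L_-}(\omega)+i\omega V_{L_0}(\omega)$, and the phases $\omega^{2}=e^{2\pi i/3}$ and $i\omega=e^{5\pi i/6}$ are \emph{not} of the allowed form $\pm i^{*}(i\sqrt{3})^{*}$; one must prove that, for the configurations of $(d_-,d_0,d_+)$ that actually occur, the two summands interfere so as to collapse to a single term of the allowed form (for the trefoil triple, $\omega^{2}\cdot 1+i\omega\cdot(-i)=\omega^{2}+\omega=i\sqrt{3}$). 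Verifying that such a collapse is \emph{consistent} with the recursion is not the same as proving it is \emph{forced}: a priori $V_{L_+}(\omega)$ is just some element of $\mathbf{Z}[e^{\pi i/6}]$, and nothing in your outline rules out that the sum fails to have the prescribed shape. This forced-cancellation lemma, with its sign bookkeeping against $d$, is what Lickorish--Millett actually prove, and it is absent here.

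There is also a circularity in your proposed repair. The ``classical identity'' you want to use as an anchor, $\mathrm{Col}_{3}(L)=3^{d+1}=3\,|V_{L}(\omega)|^{2}$, is not available: the first equality (Fox's count of $3$-colourings) is independent, but the second equality $|V_{L}(\omega)|^{2}=3^{d}$ is precisely the modulus half of Theorem~\ref{thm:LM}, whose standard proof is the very result you are trying to establish. So both halves of your strengthened inductive hypothesis --- the modulus statement and the phase statement --- remain unproved, and the proposal as written is an organized reduction of the problem to its hardest step rather than a proof of it.
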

Theorems~\ref{thm1} and \ref{thm2} imply that  the Jones polynomial   is
an obstruction for knots to have (positive) special annulus presentations.
\begin{thm}\label{thm1}
Let $K$ be a knot with $V_{K}(\omega)= - i\sqrt{3}$.
Then  $K$ does not have any positive {special} annulus presentations.
\end{thm}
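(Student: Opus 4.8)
The plan is to exploit the defining feature recalled in Section~\ref{sec:annulus}: a positive special annulus presentation exhibits $K$ as the result of a single band surgery on the positive Hopf link $H_{+}$. So I would first pin down the value of the Jones polynomial at $\omega=e^{\pi i/3}$ on the model input $H_{+}$. Since $H_{+}$ has $c=2$ components and its double branched cover is $\Sigma(H_{+})=L(2,1)=\mathbf{RP}^{3}$, with $H_{1}(\mathbf{RP}^{3};\mathbf{Z}/3\mathbf{Z})=0$, Theorem~\ref{thm:LM} gives $V_{H_{+}}(\omega)=\pm i^{\,c-1}(i\sqrt{3})^{0}=\pm i$; a one-line evaluation of $V_{H_{+}}(t)=-t^{-1/2}-t^{-5/2}$ at $\omega$ fixes the sign, so $V_{H_{+}}(\omega)=i$.

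Next I would connect $V_{K}(\omega)$ to $V_{H_{+}}(\omega)$ through the band. Replacing the band by a crossing realizes $K$ and $H_{+}$ as the two smoothings of a single crossing, so the skein relation $(\ref{eq:jones})$ applies to the two crossing resolutions and the coherently oriented smoothing. The arithmetic at $\omega$ is what makes this manageable: one has $t^{1/2}-t^{-1/2}=i$, $t+t^{-1}=1$, and crucially $t^{\pm 3}=-1$, so that every framing/writhe correction and every reversal of a component's orientation contributes only a sign $\pm 1$ (via the factor $t^{-3\,\mathrm{lk}}=(-1)^{\mathrm{lk}}$). Feeding $V_{H_{+}}(\omega)=i$ into the resulting relation, together with the constraint from Theorem~\ref{thm:LM} that every link occurring takes a value of the shape $\pm i^{\,c-1}(i\sqrt{3})^{d}$, forces $V_{K}(\omega)$ into a short list of admissible values.

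Finally I would isolate the effect of positivity. The claim is that when the Hopf band is the \emph{positive} one, the sign bookkeeping selects the favorable determination: if $d:=\dim_{\mathbf{Z}/3\mathbf{Z}}H_{1}(\Sigma(K);\mathbf{Z}/3\mathbf{Z})=1$ then $V_{K}(\omega)=+i\sqrt{3}$, whereas if $d\neq 1$ then $V_{K}(\omega)\in\{\pm 1,\pm 3i\sqrt{3},\dots\}$ has modulus different from $\sqrt{3}$. In either case $V_{K}(\omega)\neq -i\sqrt{3}$, which is the contrapositive of the statement. As a consistency check, the right-handed trefoil, which arises from $H_{+}$ by a band surgery, has $V(\omega)=+i\sqrt{3}$, while the left-handed trefoil has $V(\omega)=-i\sqrt{3}$ and indeed admits only a negative special annulus presentation.

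The step I expect to be the main obstacle is exactly the sign in the last paragraph. Distinguishing $+i\sqrt{3}$ from $-i\sqrt{3}$ is invisible to the pair $(c,d)$ alone: both the figure-eight knot, with $V(\omega)=-1$, and the unknot, with $V(\omega)=+1$, have $d=0$, so the argument must genuinely use that the Hopf band is positive. Concretely, the work is to track the orientation-reversal signs $(-1)^{\mathrm{lk}}$ and the auxiliary links in the skein relation carefully enough to see that a positive clasp forces the square class ``$+$'' in the $d=1$ case, equivalently the positive quadratic Gauss sum $\sum_{x\in\mathbf{Z}/3\mathbf{Z}}e^{2\pi i x^{2}/3}=i\sqrt{3}$ rather than its conjugate. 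Showing that the auxiliary links cannot flip this sign is where the real difficulty lies.
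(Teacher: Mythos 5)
Your proposal assembles the right tools (realizing the band surgery as a skein triple containing $H_{+}$, and evaluating at $\omega$ via Theorem~\ref{thm:LM}), but it is not a proof: the decisive step is left open. Your final claim --- that positivity of the Hopf band forces $V_{K}(\omega)=+i\sqrt{3}$ rather than $-i\sqrt{3}$ in the $d=1$ case --- \emph{is} the theorem, since the $d\neq 1$ cases are excluded by modulus alone; and you explicitly defer it (``where the real difficulty lies''). So the proposal reduces the statement to an unproven sign determination rather than establishing it. There is also a concrete sign error that would sabotage any attempt to carry out your bookkeeping: under the paper's skein convention (\ref{eq:jones}), one computes $V_{H_{+}}(t)=-t^{5/2}-t^{1/2}$, so $V_{H_{+}}(\omega)=-i$; your formula $V_{H_{+}}(t)=-t^{-1/2}-t^{-5/2}$ is the polynomial of the \emph{negative} Hopf link, giving $+i$. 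Since the whole content of the theorem is a single sign, starting from the mirror value would flip the conclusion.

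The paper's proof shows the difficulty you flag can be bypassed entirely by applying Theorem~\ref{thm:LM} not to $K$ but to the auxiliary knot. Arrange the band so that the skein triple is $(L_{+},L_{-},L_{0})=(K,K',H_{+})$, where $K'$ is the knot obtained by the crossing change at the clasp; all three links are coherently oriented in one diagram, so no orientation-reversal or $(-1)^{\mathrm{lk}}$ corrections ever enter. The skein relation gives $V_{K'}(t)=t^{-2}V_{K}(t)-t^{-1}+1-t+t^{2}$, and substituting the \emph{hypothesis} $V_{K}(\omega)=-i\sqrt{3}$ yields $V_{K'}(\omega)=-2+i\sqrt{3}$. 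This has modulus $\sqrt{7}$, hence is not of the form $\pm(i\sqrt{3})^{d}$ required of a knot by Theorem~\ref{thm:LM}; contradiction. In other words, instead of classifying all admissible values of $V_{K}(\omega)$ and tracking how ``auxiliary links'' could flip a sign, one assumes the forbidden value and lets the single knot $K'$ absorb the contradiction --- one line of arithmetic, no case analysis. (Your consistency checks survive: if one instead feeds in $V_{K}(\omega)=+i\sqrt{3}$, the same computation gives $V_{K'}(\omega)=1$, which is admissible, matching the right-handed trefoil.)
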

\par
Note that Theorem~\ref{thm1} is a special case of a more general result in \cite[Theorem~2.2]{Kanenobu}. Here we give a direct proof of Theorem~\ref{thm1} for the sake of the reader.
\begin{proof}
Suppose that  $K$ has a positive {special} annulus presentation.
Then we can suppose that the positive Hopf link $H_{+}$ is obtained from $K$ by a single band surgery as in Figure~\ref{skein3}.
\begin{figure}[h!]
\begin{center} \includegraphics[scale=0.4]{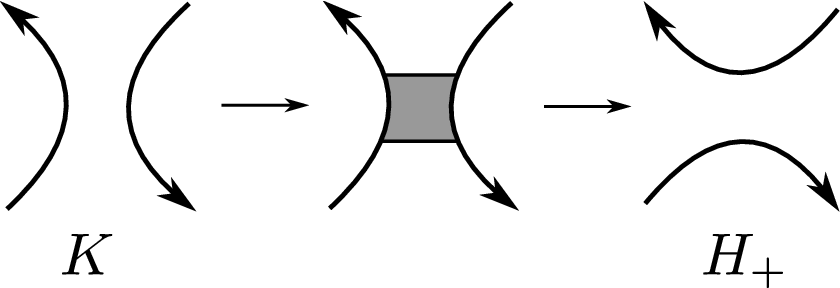} \end{center}
\caption{A band surgery} \label{skein3}
\end{figure}
Here we consider the   skein triple in  Figure~\ref{fig2}.
\begin{figure}[h!]
\begin{center} \includegraphics[scale=0.5]{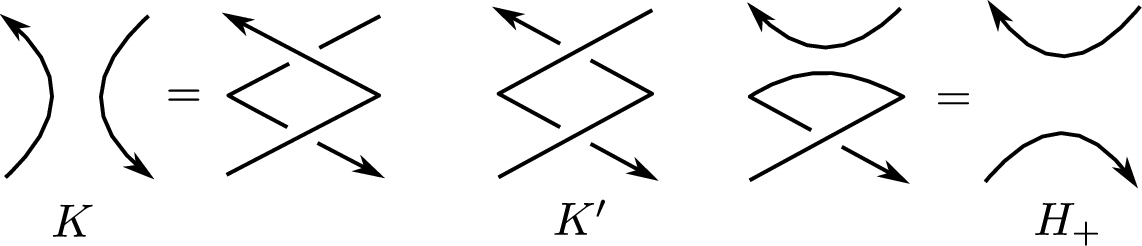} \end{center} \caption{A skein triple} \label{fig2}
\end{figure}
\par
Jones's skein relation implies that 
\[ t^{-1} V_{K}(t) -t  V_{K'}(t)= (t^{1/2}-t^{-1/2})  V_{H_{+}}(t). \]
Since $V_{H_{+}}(t)=-t^{5/2}-t^{1/2}$, this means that $V_{K'}(t)= t^{-2} V_{K}(t)-t^{-1}+1-t+t^2$. 
Hence, we have 
\begin{align*}
V_{K'}(\omega)
&= \omega^{-2} V_{K}(\omega)-\omega^{-1}+1-\omega+\omega^2\\
&=\left(-\frac{1}{2}-\frac{\sqrt{3}}{2}i\right) V_{K}(\omega)-\frac{1}{2}+\frac{\sqrt{3}}{2}i\\
&=  -2 + \sqrt{3}i,
\end{align*}
where we used, for  the last equality,  the hypothesis that $V_{K}(\omega)= - i\sqrt{3}$.
This contradicts Theorem~\ref{thm:LM}. 
Therefore $K$ does not have any positive {special} annulus presentations.
\end{proof}
\begin{cor}\label{cor:Jones}
The knot $7_4$ does not have  any  special annulus presentations.
\end{cor}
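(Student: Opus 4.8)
The plan is to use the fact that every special annulus presentation is either positive or negative, and to rule out each case by a separate obstruction. Since $g_{4}(7_{4})=1$, Theorem~\ref{thm:4-ball} gives no information, so I would invoke the two finer obstructions developed above: Theorem~\ref{thm1} (the Jones polynomial obstruction) for the positive case and Theorem~\ref{thm:concordance} (the concordance obstruction) for the negative case.

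For the positive case, first I would observe that $7_{4}$ has determinant $15$, so $H_{1}(\Sigma(7_{4});\mathbf{Z}/3\mathbf{Z})$ is one-dimensional and Theorem~\ref{thm:LM} forces $V_{7_{4}}(\omega)=\pm i\sqrt{3}$, where $\omega=e^{\pi i/3}$. To pin down the sign I would evaluate the Jones polynomial of $7_{4}$ directly at $\omega$ and check that $V_{7_{4}}(\omega)=-i\sqrt{3}$. Theorem~\ref{thm1} then rules out any positive special annulus presentation of $7_{4}$.

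For the negative case, I would take $\nu$ to be the Rasmussen invariant, normalized so that $\nu(H_{\pm})=\pm 1$. Since $7_{4}$ is alternating with signature of absolute value $2$, its Rasmussen invariant satisfies $s(7_{4})=2$. As this value lies outside $[-2,0]$, Theorem~\ref{thm:concordance} shows that $7_{4}$ has no negative special annulus presentation. Combining the two cases shows that $7_{4}$ has no special annulus presentation at all.

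The step I expect to be most delicate is ensuring that the two computed invariants, $V_{7_{4}}(\omega)=-i\sqrt{3}$ and $s(7_{4})=2$, correspond to the same chirality of $7_{4}$: the precise signs, rather than merely the magnitudes $i\sqrt{3}$ and $2$, are what make both obstructions fire simultaneously for a single representative. If the signs turned out to belong to the mirror image instead, I would use the fact that $K$ has an annulus presentation if and only if $\overline{K}$ does in order to transfer the conclusion back to $7_{4}$. Everything else reduces to a routine polynomial evaluation at $\omega$ and a standard table lookup for the signature.
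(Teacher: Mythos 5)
Your proposal is correct and takes essentially the same route as the paper: the paper likewise fixes a single diagram of $7_4$ (Figure~\ref{knots}), computes $s(7_4)=2$ to exclude negative special annulus presentations via Theorem~\ref{thm:concordance}, and computes $V_{7_4}(\omega)=-i\sqrt{3}$ to exclude positive ones via Theorem~\ref{thm1}. The chirality issue you flag as delicate is exactly what the paper resolves by evaluating both invariants on that one fixed diagram; your determinant/Theorem~\ref{thm:LM} observation is a harmless extra.
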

\begin{proof}
Let $K$ be the knot $7_4$ in Figure~\ref{knots}.
Then  we can calculate that  $s(K)=2$.
By Theorem~\ref{thm:concordance},
$K$ does not have any negative special annulus presentations.
On the other hand, we have 
\begin{align*}
V_{K}(t)&=t-2t^2+ 3t^3-2t^4+ 3t^5-2t^6+ t^7-t^8. \end{align*}
Therefore  we obtain $V_{K}(\omega)=-i\sqrt{3}$.
By Theorem~\ref{thm1}, 
$K$ does not have any positive special annulus presentations.
\end{proof}
\begin{figure}[h!]
\begin{center}
\includegraphics[scale=0.4]{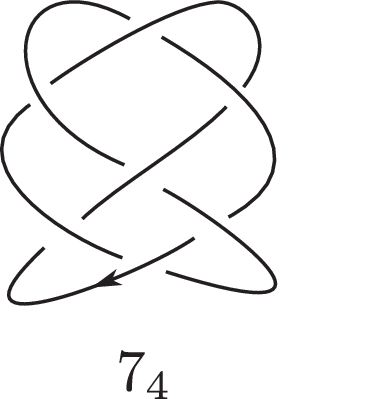}
\end{center}
\caption{The knot $7_4$}
\label{knots}
\end{figure}

\begin{thm}\label{thm2}
Let $K$ be a knot with  $V_{K}(\omega)=\pm3$.
Then $K$ does not have  any special annulus presentations. 
\end{thm}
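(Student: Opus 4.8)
The plan is to mimic the proof of Theorem~\ref{thm1}, but now handle both the positive and negative cases at once, using the value $V_K(\omega)=\pm 3$ as the input. Recall from Theorem~\ref{thm:LM} that for a \emph{knot} $K$ (so $c=1$), the admissible values of $V_K(\omega)$ are exactly $\pm(i\sqrt 3)^d=\pm 3^{d/2}i^d$ for nonnegative integers $d$; in particular $\pm 3$ is itself an admissible value for a knot (take $d=2$, giving $\pm i^2 3=\mp 3$). The obstruction must therefore come not from $K$ but from the \emph{two-component link} $H_\pm$ produced by the band surgery, whose value $V_{H_\pm}(\omega)$ has $c=2$ and so lies in $\pm i(i\sqrt 3)^d$, i.e. is a purely imaginary multiple of a power of $\sqrt 3$. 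The strategy is to compute $V_{H_\pm}(\omega)$ via the skein relation and show it does not have this form.

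First I would split into the two cases according to whether $(A,b)$ is a positive or a negative special annulus presentation, so that $K$ is obtained from $H_{+}$ or $H_{-}$ by a single band surgery, exactly as in Figures~\ref{skein3} and \ref{fig2}. In the positive case I would set up the same skein triple $(K,K',H_{+})$ as in the proof of Theorem~\ref{thm1} and solve the skein relation~(\ref{eq:jones}) for $V_{H_+}(t)$ in terms of $V_K(t)$ and $V_{K'}(t)$; in the negative case the analogous triple has $H_{-}$ in the resolved position, and I would use $V_{H_-}(t)=-t^{-5/2}-t^{-1/2}$ together with the corresponding skein relation. Evaluating at $t=\omega=e^{\pi i/3}$ and substituting the hypothesis $V_K(\omega)=\pm 3$, I would obtain an explicit complex number for $V_{H_\pm}(\omega)$ (the other input $V_{K'}(\omega)$ being constrained to an admissible knot value, since $K'$ is again a knot). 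The aim is to show that whatever admissible value $V_{K'}(\omega)$ takes, the resulting $V_{H_\pm}(\omega)$ is forced into a form incompatible with Theorem~\ref{thm:LM} applied to a $2$-component link.

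The key computation is the arithmetic of $\omega$: I would use $\omega^{-1}=\tfrac12-\tfrac{\sqrt3}{2}i$, $\omega=\tfrac12+\tfrac{\sqrt3}{2}i$, $\omega^2=-\tfrac12+\tfrac{\sqrt3}{2}i$, and $t^{1/2}-t^{-1/2}=\omega^{1/2}-\omega^{-1/2}=i$ at $t=\omega$, exactly as in the previous proof. Feeding $V_K(\omega)=\pm 3$ through the solved skein relation yields a concrete value of $V_{H_\pm}(\omega)$; by Theorem~\ref{thm:LM} with $c=2$ this value must be a purely imaginary real multiple of a power of $\sqrt 3$, i.e. of the form $\pm i\cdot 3^{d/2}$. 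The contradiction will be that the computed value has a nonzero real part (or otherwise fails to be of this shape), so no such band surgery, and hence no special annulus presentation of the asserted sign, can exist.

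The main obstacle will be that the skein relation for a single band surgery involves \emph{two} unknown knot values, $V_K(\omega)$ and $V_{K'}(\omega)$, whereas the proof of Theorem~\ref{thm1} got away with pinning only $V_K(\omega)$ and then reading off a contradiction directly for $V_{K'}(\omega)$. Here I expect that $V_{K'}(\omega)$ cannot be left arbitrary: I would need to argue that $V_{K'}(\omega)$ ranges over the admissible \emph{knot} values $\pm 3^{d'/2}i^{d'}$ and then check that \emph{none} of these choices makes $V_{H_\pm}(\omega)$ admissible as a link value. Equivalently, one rearranges so that the hypothesis $V_K(\omega)=\pm 3$ forces a real-part/imaginary-part mismatch regardless of $V_{K'}(\omega)$; verifying that this mismatch is robust across all admissible $V_{K'}(\omega)$ is the delicate point, and it is where I would spend most of the care, rather than in the routine evaluation of powers of $\omega$.
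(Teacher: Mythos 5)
Your plan discards exactly the piece of information that makes this argument work, and the ``delicate point'' you flag at the end is in fact where it collapses. In the proof of Theorem~\ref{thm1} there is only \emph{one} unknown in the skein relation, not two: $V_{K}(\omega)$ is given by hypothesis, and $V_{H_{+}}(\omega)$ is not merely ``admissible'' but known exactly, since $V_{H_{+}}(t)=-t^{5/2}-t^{1/2}$ gives $V_{H_{+}}(\omega)=-i$. The skein relation therefore pins $V_{K'}(\omega)$ to an explicit number, and the contradiction with Theorem~\ref{thm:LM} is read off from the \emph{knot} $K'$, not from the link $H_{\pm}$. The paper's proof of Theorem~\ref{thm2} is the same scheme: with $V_{K}(\omega)=\pm 3$ one gets
\[ V_{K'}(\omega)=\left(-\tfrac{1}{2}-\tfrac{\sqrt{3}}{2}i\right)(\pm 3)-\tfrac{1}{2}+\tfrac{\sqrt{3}}{2}i \in \left\{-2-\sqrt{3}i,\ 1+2\sqrt{3}i\right\}, \]
and neither value is of the form $\pm(i\sqrt{3})^{d}$ (all such values are real or purely imaginary), a contradiction. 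This rules out positive presentations; the negative case is then handled by mirroring ($V_{\overline{K}}(\omega)=\overline{V_{K}(\omega)}=\pm3$ since $\pm 3$ is real, and a negative special annulus presentation of $K$ is a positive one of $\overline{K}$), rather than by a second skein computation with $H_{-}$ as you propose --- though that variant would also work, provided you again use the exact value $V_{H_{-}}(\omega)=i$.

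By contrast, your strategy --- treat $V_{H_{\pm}}(\omega)$ as the unknown, let $V_{K'}(\omega)$ range over all admissible knot values, and show that every choice makes $V_{H_{\pm}}(\omega)$ inadmissible as a $2$-component link value --- provably fails. Take $V_{K}(\omega)=3$ and $V_{K'}(\omega)=-3$, which is an admissible knot value ($d=2$ in Theorem~\ref{thm:LM}). The skein relation at $t=\omega$ reads $\omega^{-1}V_{K}(\omega)-\omega V_{K'}(\omega)=i\,V_{H_{+}}(\omega)$, and since $3(\omega^{-1}+\omega)=3$ this forces $V_{H_{+}}(\omega)=-3i$. But $-3i=i^{2-1}(i\sqrt{3})^{2}$ \emph{is} an admissible value for a $2$-component link, so no contradiction arises; the same happens for $V_{K}(\omega)=-3$ with $V_{K'}(\omega)=3$. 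In other words, the constraint system you set up (skein relation plus LM-admissibility of both $K'$ and $H_{+}$) has a solution, and the only way to close the argument is to use the exact Hopf-link value $V_{H_{+}}(\omega)=-i\neq -3i$, i.e.\ the paper's route. Your opening inference --- that since $\pm3$ is an admissible knot value ``the obstruction must come from the two-component link'' --- is the misstep: the obstruction comes from $K'$, whose value at $\omega$ is forced to be inadmissible once the Hopf link's value is used exactly.
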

\begin{proof}
Assume that $K$ has a positive special annulus presentation. 
Then, by the proof of Theorem~\ref{thm1}, we have 
\[ V_{K'}(\omega)
=\left(-\frac{1}{2}-\frac{\sqrt{3}}{2}i\right) V_{K}(\omega)+\frac{\sqrt{3}}{2}i-\frac{1}{2}.\] 
Both cases $V_{K}(\omega)=\pm3$ contradict  Theorem~\ref{thm:LM}.
Hence, $K$ does not have  any positive special annulus presentations. 
Let $\overline{K}$ be the mirror image of $K$. 
Since $V_{\overline{K}}(\omega)=\overline{V_{K}(\omega)}=\pm 3$, 
we see that $\overline{K}$ also does not have  any positive special annulus presentations. 
Equivalently, $K$ does not have any negative special annulus presentations. 
\end{proof}
\begin{cor} \label{cor:Jones2}
The  knot $8_{18}$ does not have any special annulus presentations. 
\end{cor}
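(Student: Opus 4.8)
The plan is to apply Theorem~\ref{thm2} directly to the knot $8_{18}$. Theorem~\ref{thm2} asserts that any knot $K$ with $V_{K}(\omega)=\pm 3$ (where $\omega=e^{\pi i/3}$) has no special annulus presentation, positive or negative. So the only thing I need to establish is the numerical fact that $V_{8_{18}}(\omega)=\pm 3$, and the corollary follows immediately.

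First I would recall (or look up in a standard table such as KnotInfo \cite{KnotInfo}) the Jones polynomial of $8_{18}$, which is a well-known alternating knot. It is the symmetric Laurent polynomial
\[
V_{8_{18}}(t)=t^{-4}-4t^{-3}+6t^{-2}-7t^{-1}+9-7t+6t^{2}-4t^{3}+t^{4}.
\]
Next I would substitute $t=\omega=e^{\pi i/3}$, a primitive $6$th root of unity. The key simplification is that the powers of $\omega$ are $6$-periodic and satisfy $\omega^{3}=-1$, so the substitution collapses to a small number of distinct values among $\{1,\omega,\omega^{2},-1,-\omega,-\omega^{2}\}$; grouping the symmetric pairs $t^{k}$ and $t^{-k}$ gives real cosine terms, since $\omega^{k}+\omega^{-k}=2\cos(k\pi/3)$. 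Carrying out this short evaluation yields $V_{8_{18}}(\omega)=\pm 3$, exactly the hypothesis of Theorem~\ref{thm2}.

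The computation itself is entirely routine once the Jones polynomial is in hand, so there is no real obstacle: the only point requiring care is the bookkeeping in reducing the powers of $\omega$ correctly, together with a sanity check against Theorem~\ref{thm:LM}, which predicts $V_{L}(\omega)=\pm i^{c-1}(i\sqrt 3)^{d}$ for a knot ($c=1$) and hence a value of the form $\pm(i\sqrt 3)^{d}$; the value $\pm 3=\pm(i\sqrt 3)^{2}$ corresponds to $d=2$, confirming consistency. With $V_{8_{18}}(\omega)=\pm 3$ verified, Theorem~\ref{thm2} applies and shows that $8_{18}$ has no special annulus presentation, completing the proof.
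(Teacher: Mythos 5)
Your proposal is correct and follows essentially the same route as the paper: the paper's proof also consists of the single observation that $V_{8_{18}}(\omega)=\pm 3$ followed by an application of Theorem~\ref{thm2}. Your explicit evaluation is right as well --- with $V_{8_{18}}(t)=t^{-4}-4t^{-3}+6t^{-2}-7t^{-1}+9-7t+6t^{2}-4t^{3}+t^{4}$ one gets $V_{8_{18}}(\omega)=(-1)-4(-2)+6(-1)-7(1)+9=3$, consistent with $d=2$ in Theorem~\ref{thm:LM}.
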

\begin{proof}
We have $V_{8_{18}}(\omega)=\pm3$.
By Theorem~\ref{thm2}, 
the knot $8_{18}$ does not have any negative special annulus presentations. 
\end{proof}
\subsection{The $Q$-polynomial obstruction}\label{sub4}
The \textit{$Q$-polynomial} $Q_{L}(x)$
is a Laurent polynomial invariant of an unoriented link $L \subset \mathbf{S}^3$
which is characterized by 
 \begin{align*}
Q_{U}(x)&=1,\\
Q_{L_{+}}(x)+Q_{L_{-}}(x)&=x(Q_{L_{0}}(x)+Q_{L_{\infty}}(x)),
\end{align*}
where  the links $L_{+}$, $L_{-}$, $L_{0}$, $L_{\infty}$ are identical except for a neighborhood of a point
as shown in Figure~\ref{skein7}, for example, see \cite{Kawauchi}.
\begin{rem} 
The $Q$-polynomial $Q_{L}(x)$ have the following properties:
\begin{itemize}
\item $Q_{L}(x)=F_{L}(1,x)$, where $F_{L}(a,z)$ is the Kauffman polynomial of $L$, 
\item $Q_{L}(x)=Q_{\overline{L}}(x)$, where $\overline{L}$ is the mirror image of $L$. 
\end{itemize}
\end{rem}
\begin{figure}[h!]
\begin{center}
\includegraphics[scale=0.4]{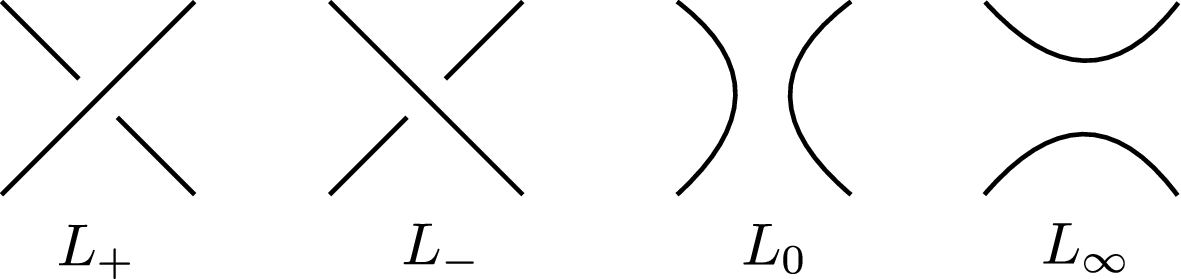}
\end{center}
\caption{A skein quadruple}
\label{skein7}
\end{figure}
An analogous result  of Theorem~\ref{thm1} holds for the Q-polynomial.

\begin{thm}\label{thm:Q}
Let $K$ be a knot with a special annulus presentation.
Then {we have} $Q_{L}\left(\dfrac{\sqrt{5}-1}{2}\right)\neq \sqrt{5}$.
\end{thm}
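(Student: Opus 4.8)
The plan is to imitate the proof of Theorem~\ref{thm1}, with the oriented skein relation and Theorem~\ref{thm:LM} replaced by the $Q$-polynomial skein relation and the $Q$-polynomial analogue of Theorem~\ref{thm:LM} (also contained in \cite{LM}). Throughout I would put $x_{0}=\tfrac{\sqrt5-1}{2}=2\cos(2\pi/5)$, the relevant evaluation point, and record the identity $x_{0}^{2}+x_{0}-1=0$, equivalently $x_{0}^{-1}=x_{0}+1=\tfrac{\sqrt5+1}{2}$. Since $Q_{L}(x)=Q_{\overline{L}}(x)$ and the positive and negative Hopf links share the same $Q$-polynomial, the positive and negative cases need not be separated.

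The one external input I would isolate is the evaluation (the $Q$-analogue of Theorem~\ref{thm:LM}): for every link $L$,
\[
Q_{L}(x_{0})=\pm\sqrt5^{\,d},\qquad d=\dim H_{1}(\Sigma(L),\mathbf{Z}/5\mathbf{Z})\ge 0,
\]
so that $Q_{L}(x_{0})$ always lies in $S=\{\pm\sqrt5^{\,d}:d\ge 0\}=\{\pm1,\pm\sqrt5,\pm5,\dots\}$. I would also pin down the single Hopf value the argument uses. Applying the skein relation at one crossing of the standard $2$-crossing Hopf diagram gives $L_{+}=\mathrm{Hopf}$, $L_{-}=U\sqcup U$, and $L_{0}=L_{\infty}=U$, so that $Q_{\mathrm{Hopf}}+Q_{U\sqcup U}=x(Q_{U}+Q_{U})=2x$; with $Q_{U}=1$ and $Q_{U\sqcup U}=2x^{-1}-1$ this gives $Q_{\mathrm{Hopf}}(x)=2x-2x^{-1}+1$, hence
\[
Q_{\mathrm{Hopf}}(x_{0})=(\sqrt5-1)-(\sqrt5+1)+1=-1 .
\]

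Now I would assume $K$ has a special annulus presentation $(A,b)$ and, for contradiction, that $Q_{K}(x_{0})=\sqrt5$. By definition $K$ is obtained from the Hopf link $H=\partial A$ by a single band surgery, and a band surgery is a saddle move: thus $K$ and $H$ are exactly the two smoothings $\{L_{0},L_{\infty}\}$ at one crossing site, and I let $L_{+},L_{-}$ be the two links obtained by inserting a crossing there. The $Q$-skein relation (symmetric in $L_{0},L_{\infty}$) then yields
\[
Q_{L_{+}}(x_{0})+Q_{L_{-}}(x_{0})=x_{0}\bigl(Q_{K}(x_{0})+Q_{H}(x_{0})\bigr)=x_{0}(\sqrt5-1)=\frac{(\sqrt5-1)^{2}}{2}=3-\sqrt5 .
\]
The key point is that I never need to identify $L_{+}$ or $L_{-}$: they are links, so each of $Q_{L_{\pm}}(x_{0})$ lies in $S$. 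Every element of $S$ is either rational ($\pm5^{k}$, when $d$ is even) or a $\sqrt5$-multiple ($\pm5^{k}\sqrt5$, when $d$ is odd). Since $3-\sqrt5$ has nonzero $\sqrt5$-part, exactly one of $Q_{L_{+}}(x_{0}),Q_{L_{-}}(x_{0})$ must be rational and the other a $\sqrt5$-multiple, forcing the rational part of the sum to equal $\pm5^{k}$ for some $k$; but $3\neq\pm5^{k}$, a contradiction. Hence $Q_{K}(x_{0})\neq\sqrt5$.

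The main obstacle is the evaluation step: everything rests on the $Q$-analogue of Theorem~\ref{thm:LM} and, crucially, on the sign in $Q_{\mathrm{Hopf}}(x_{0})=-1$. With the opposite sign the right-hand side would be $x_{0}(\sqrt5+1)=2=1+1$, a genuine sum of two elements of $S$, and the contradiction would evaporate; so I would compute this Hopf value unambiguously. The geometric input that keeps the argument short---recognizing the band surgery as the saddle that exhibits $K$ and $H$ as the two smoothings of a single site---is what lets me bypass any description of the auxiliary links $L_{\pm}$.
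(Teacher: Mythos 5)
Your proof is correct, and at the top level it follows the same strategy as the paper (band surgery from the Hopf link plus golden-ratio evaluations of $Q$), but it uses a different key lemma: the paper's proof is a four-line application of Kanenobu's band-surgery theorem \cite[Theorem~3.1]{Kanenobu}, which states that if $L_1$ and $L_2$ are related by a single band surgery then $Q_{L_1}(x_0)/Q_{L_2}(x_0)\in\{\pm 1,\sqrt{5}^{\pm 1}\}$ at $x_0=\tfrac{\sqrt 5-1}{2}$; combined with the same computation $Q_H(x_0)=-1$ that you made, this gives $Q_K(x_0)\in\{\pm 1,-\sqrt{5}^{\pm 1}\}$, hence $\neq\sqrt 5$. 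What you do instead is reprove the special case of Kanenobu's theorem that is needed: you realize the band surgery as a saddle, i.e.\ as the two smoothings $(L_0,L_\infty)$ of a skein quadruple, evaluate the $Q$ skein relation at $x_0$, and invoke the evaluation theorem $Q_L(x_0)=\pm\sqrt{5}^{\,\dim H_1(\Sigma(L);\mathbb{Z}/5\mathbb{Z})}$ together with the $\mathbb{Q}$-linear independence of $1$ and $\sqrt 5$ to reach a contradiction; this is essentially Kanenobu's own argument, inlined. Two remarks. First, your attribution of the evaluation theorem to \cite{LM} is off: Lickorish--Millett treat evaluations of the Jones and HOMFLY polynomials (their result is Theorem~\ref{thm:LM} here); the $Q$-polynomial evaluation at $x_0$ is the result of Long documented in \cite{Rong}, which is precisely the input the paper routes through \cite{Rong} and \cite{Kanenobu}. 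Second, you correctly isolated the genuinely delicate point that both proofs share: the sign in $Q_H(x_0)=-1$, which in the paper's formulation corresponds to Kanenobu's ratio set containing $+\sqrt{5}^{\pm1}$ but not $-\sqrt{5}^{\pm1}$; with the opposite sign both arguments would collapse, as you observe. Your version buys self-containedness (modulo Long's theorem) and makes this sign-sensitivity explicit, while the paper's buys brevity by black-boxing the band-surgery lemma.
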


\begin{proof}
Based on the result of Long in \cite{Rong} (see also Stoimenow \cite{Stoimenow}),
Kanenobu \cite[Theorem~3.1]{Kanenobu} proved that 
if two links $L_1$ and $L_2$ are related by a single band surgery,
then 
\[ Q_{L_1}\left(\dfrac{\sqrt{5}-1}{2}\right)\Big/ Q_{L_2}\left(\dfrac{\sqrt{5}-1}{2}\right)
\in \{ \pm1, \sqrt{5}^{\pm1} \}.\]
Since the knot $K$ and the Hopf link $H$  are  related by a single band surgery, we obtain
\[ Q_{K}\left(\dfrac{\sqrt{5}-1}{2}\right)\Big/ Q_{H}\left(\dfrac{\sqrt{5}-1}{2}\right)
\in \{  \pm1, \sqrt{5}^{\pm1}\}.\]
Since $Q_{H}\left(\dfrac{\sqrt{5}-1}{2}\right)=-1$, this implies   that
$ Q_{K}\left(\dfrac{\sqrt{5}-1}{2}\right) \in \{ \pm1, -\sqrt{5}^{\pm1} \}$.
In particular, $Q_{K}\left(\dfrac{\sqrt{5}-1}{2}\right)\neq \sqrt{5}$.
\end{proof}

\begin{cor} \label{cor:table-final}
The knots $8_{8}$ and  $8_{16}$ do not have any special annulus presentations.
\end{cor}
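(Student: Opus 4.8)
The plan is to invoke the contrapositive of Theorem~\ref{thm:Q}. That theorem tells us that every knot carrying a special annulus presentation must satisfy $Q_{K}\left(\dfrac{\sqrt{5}-1}{2}\right)\neq \sqrt{5}$. Thus it suffices to show that the two knots in question realize the forbidden value, i.e.\ that $Q_{8_{8}}\left(\dfrac{\sqrt{5}-1}{2}\right)= \sqrt{5}$ and $Q_{8_{16}}\left(\dfrac{\sqrt{5}-1}{2}\right)= \sqrt{5}$. Once these two equalities are in hand, the conclusion is immediate.

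First I would record the $Q$-polynomials of $8_{8}$ and $8_{16}$, either reading them off a standard table such as KnotInfo~\cite{KnotInfo} or computing them directly from the defining skein quadruple of Section~\ref{sub4}. Then I would substitute $x = \dfrac{\sqrt{5}-1}{2}$ into each. The algebraic fact that makes this painless is that this value satisfies $x^{2}+x-1=0$, equivalently $x^{2}=1-x$; repeatedly applying this reduction collapses every power $x^{n}$ to the form $a+b\sqrt{5}$ with rational $a,b$, so the evaluation of each Laurent polynomial becomes a finite, mechanical computation ending in an element of $\mathbf{Q}(\sqrt{5})$.

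Finally I would confirm that in both cases the evaluation yields exactly $\sqrt{5}$, whence Theorem~\ref{thm:Q} forbids a special annulus presentation for each of $8_{8}$ and $8_{16}$. The only real obstacle here is arithmetic bookkeeping: the $Q$-polynomials are Laurent polynomials of fairly high degree, so one must carry out the substitution and the $x^{2}=1-x$ reduction carefully to avoid a sign or coefficient error. Conceptually there is no difficulty, since all the topological content is already absorbed into Theorem~\ref{thm:Q}; the corollary is purely a matter of producing the two numerical witnesses.
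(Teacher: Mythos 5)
Your proposal is correct and is essentially identical to the paper's own proof: the paper likewise records the $Q$-polynomials of $8_{8}$ and $8_{16}$, evaluates both at $x=\dfrac{\sqrt{5}-1}{2}$ to obtain $\sqrt{5}$, and then applies Theorem~\ref{thm:Q}. The arithmetic you defer does work out (both evaluations equal $\sqrt{5}$), so nothing further is needed.
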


\begin{proof}
 We have the following.
\begin{align*}
%
Q_{8_8}(x)&=1+4x+6x^{2}-10x^{3}-14x^{4}+4x^{5}+8x^{6}+2x^{7}, \\
Q_{8_{16}}(x)&=-3+10x+18x^{2}-22x^{3}-30x^{4}+8x^{5}+16x^{6}+4x^{7}.
\end{align*}
Therefore 
\begin{align*}
Q_{8_8}\left(\dfrac{\sqrt{5}-1}{2}\right)= 
Q_{8_{16}}\left(\dfrac{\sqrt{5}-1}{2}\right)= 
\sqrt{5}.
\end{align*}
By Theorem~\ref{thm:Q},
the knots $8_{8}$ and $8_{16}$ do not have any  special annulus presentations.
\end{proof}

\begin{proof}[Proof of Theorem~\ref{thm:table}]
By Corollaries \ref{cor:4-ball}, \ref{cor:Jones}, \ref{cor:Jones2} and   \ref{cor:table-final}, 
the knots 
\[ 5_{1},\  7_{1},\ 7_{3},\ 7_{4},\ 7_{5},\ 8_{2},\
 8_{5},\  8_{8},\  8_{15},\  8_{16},\ 8_{18},\ 8_{19}\]
 do not have special annulus presentations.
\end{proof}

\begin{rem} 
There exist knots which have only non-special annulus presentations. 
By Theorem~7.1, the knots $7_4$, $8_8$, $8_{16}$ and $8_{18}$ do not have any special annulus presentations. 
On the other hand, it is easy to see that $7_4$ has a non-special annulus presentation (see Figure~\ref{fig:non-special}). 
Also, we can obtain a non-special annulus presentation of $8_8$ from a ribbon presentation of $8_8$. 
In fact, we can find a ribbon presentation of $8_8$ in \cite[Appendix~F.5]{Kawauchi}, and this presentation is also a non-special annulus presentation of $8_8$. 
The authors do not know whether the knots $8_{16}$ and $8_{18}$ have non-special annulus presentations or not. 
\end{rem}

\begin{figure}[h]
\begin{center}
\includegraphics[scale=0.3]{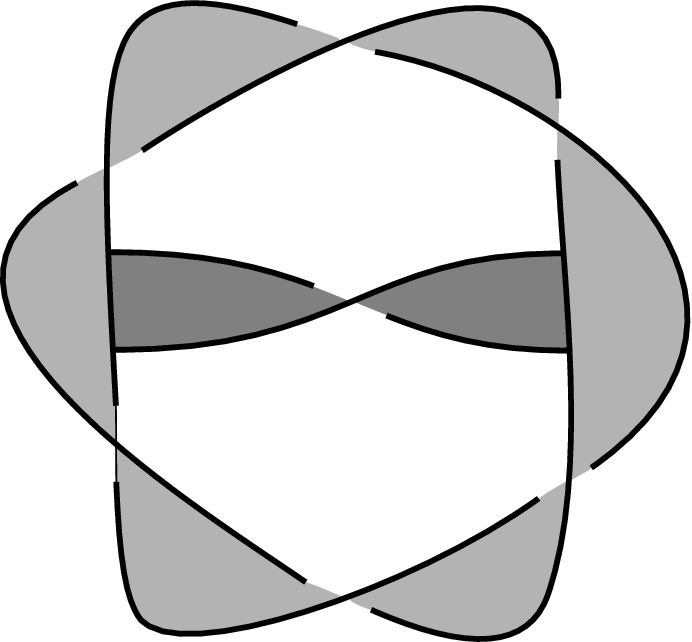}
\caption{A non-special annulus presentation of $7_4$}
\label{fig:non-special}
\end{center}
\end{figure}

\subsection{Equivalent annulus presentations of knots}\label{sub:equivalent}
In general, a given knot has  many special annulus presentations.
For example, the knot $6_3$ has two special annulus presentations $(A,b_1)$ and $(A,b_2)$
as in Figure~\ref{figure:annulus-pre2}.
\begin{figure}[h!]
\centering
\includegraphics[scale=0.7]{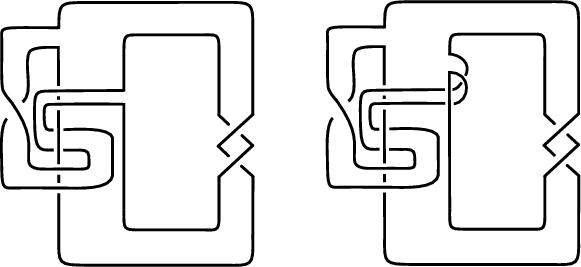}
\caption{Two special annulus presentations of $6_3$}
\label{figure:annulus-pre2}
\end{figure}

We define as follows.
\begin{defn}\label{def:equivalent}
Let $(A_i,b_i)$ be  an annulus presentation of a knot $K_i$ for $i=1, 2$. 
Then $(A_1,b_1)$ and $(A_2,b_2)$ are {\it equivalent} if 
the $3$-component link $K_1 \cup c_1'\cup c_2'$ is isotopic to $K_2\cup d_1'\cup d_2'$,
where $K_1$ is deformed into  $K_2$, and  $A_1'$, $A_2'$ are the  shrunken annuli corresponding to  $(A_1,b_1)$, $(A_2,b_2)$  respectively 
and  $\partial A_1'=c_1'\cup c_2'$, $\partial A_2'=d_1'\cup d_2'$. 
\end{defn}

The following theorem justifies the above definition.
\begin{thm}\label{thm:equivalent}
Let  $(A_1,b_1)$ and $(A_2,b_2)$ be equivalent annulus presentations of $K_1$ and $K_2$, respectively. 
Then  we have 
\[A_1^{n}(K_1)=A_2^{n}(K_2), \text{or}\  A_1^{n}(K_1)=A_2^{-n}(K_2) \]
 for any $n\in \mathbf{Z}$. 
\end{thm}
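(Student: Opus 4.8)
The plan is to show that, for an annulus presentation $(A_i,b_i)$ with shrunken annulus $A_i'$ and $\partial A_i'=c_1'\cup c_2'$, the knot $A_i^{n}(K_i)$ depends only on the ordered isotopy type of the three-component link $K_i\cup c_1'\cup c_2'$ together with the integer $n$. Once this is established, the hypothesis that $(A_1,b_1)$ and $(A_2,b_2)$ are equivalent in the sense of Definition~\ref{def:equivalent} --- that is, $K_1\cup c_1'\cup c_2'$ and $K_2\cup d_1'\cup d_2'$ are isotopic as ordered three-component links --- immediately forces $A_1^{n}(K_1)=A_2^{n}(K_2)$ for every $n$. (I read this equivalence as the standing hypothesis of the theorem, since it is what the statement ``justifies the above definition'' refers to.)

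First I would recall from the definition of the $n$-fold annulus twist that $A_i^{n}(K_i)$ is nothing but the image of $K_i$ inside the surgered manifold $M_{c_1'\cup c_2'}(lk(c_1,c_2)+1/n,\,lk(c_1,c_2)-1/n)$, which is orientation-preservingly homeomorphic to $\mathbf{S}^3$ by \cite[Theorem~2.1]{Osoinach}; since $c_1'\cup c_2'$ is disjoint from $K_i$, the knot $K_i$ descends to this $\mathbf{S}^3$ as $A_i^{n}(K_i)$. Next I would promote the ordered-link isotopy to an ambient isotopy of $\mathbf{S}^3$, obtaining an orientation-preserving self-homeomorphism $h\colon \mathbf{S}^3\to\mathbf{S}^3$ with $h(K_1)=K_2$, $h(c_1')=d_1'$ and $h(c_2')=d_2'$. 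Because the isotopy is ordered and respects the parallel orientations coming from the shrunken annuli, it preserves linking numbers, so $lk(c_1',c_2')=lk(d_1',d_2')$ and the two twists use identical surgery coefficients.

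It then remains to push $h$ through the surgeries. Restricting to link exteriors, $h$ gives a homeomorphism $\mathbf{S}^3\setminus\nu(c_1'\cup c_2')\to\mathbf{S}^3\setminus\nu(d_1'\cup d_2')$; as an orientation-preserving homeomorphism of $\mathbf{S}^3$ it carries meridians to meridians and Seifert-framed longitudes to Seifert-framed longitudes, hence sends the rational slope $lk(c_1,c_2)\pm 1/n$ on $\partial\nu(c_j')$ to the same slope on $\partial\nu(d_j')$. Therefore $h$ extends across the surgery solid tori to an orientation-preserving homeomorphism $\tilde h$ between the two copies of $\mathbf{S}^3$, and by construction $\tilde h\bigl(A_1^{n}(K_1)\bigr)=A_2^{n}(K_2)$; since equivalent knots in $\mathbf{S}^3$ are isotopic, this yields the claim. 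The step I expect to be the main obstacle is precisely this slope bookkeeping: one must verify that $h$ preserves the \emph{rational} surgery data $lk(c_1,c_2)\pm 1/n$ and not merely the underlying curves $c_j'$. This hinges on orientation-preservation together with the topological invariance of the Seifert longitude, which guarantee that all of the framing information entering the definition of the annulus twist transports faithfully under $h$.
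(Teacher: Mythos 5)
Your proof is correct and takes the only natural route, which is the one the paper itself intends: the authors actually omit the proof of Theorem~\ref{thm:equivalent}, stating that it ``immediately follows from Definition~\ref{def:equivalent}'', and your argument---promoting the ordered-link isotopy to an ambient orientation-preserving homeomorphism, checking that meridians, Seifert longitudes and linking numbers (hence the rational slopes $lk(c_1,c_2)\pm 1/n$) are transported faithfully, and then extending across the surgery solid tori---is precisely the filling-in of that omitted proof. You were also right to read into the statement the hypothesis, inadvertently dropped in the paper, that $(A_1,b_1)$ and $(A_2,b_2)$ are \emph{equivalent}; without it the claim is plainly false (e.g.\ for presentations of two knots with non-homeomorphic $0$-surgeries), and with it your slope bookkeeping is the entire content of the argument.
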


We omit the proof of Theorem~\ref{thm:equivalent}
since it is immediately  follows from Definition~\ref{def:equivalent}.
The following lemma is useful to find  equivalent annulus presentations.

\begin{lem}\label{lem:equivalent}
Let $A$ be (possibly)  twisted and knotted annulus in $\mathbf{S}^3$.
Let $(A,b_1)$ and $(A,b_2)$ be two annulus presentations of a knot 
whose bands $b_1$ and $b_2$  are slightly different as  in Figure~\ref{figure:equivalent}.  
Then  $(A,b_1)$ and $(A,b_2)$ are equivalent.
\end{lem}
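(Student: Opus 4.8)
The plan is to extract the isotopy directly from Figure~\ref{figure:equivalent}, exploiting that the two presentations share the \emph{same} annulus $A$. Since the underlying annulus is common to $(A,b_1)$ and $(A,b_2)$, I would choose one and the same shrunken annulus $A'$ for both; then the ordered boundary $\partial A'=c_1'\cup c_2'$ serves simultaneously as $c_1'\cup c_2'$ and as $d_1'\cup d_2'$ in Definition~\ref{def:equivalent}. Writing $K_i$ for the knot presented by $(A,b_i)$, the equivalence of the two presentations therefore reduces to producing an ambient isotopy of $\mathbf{S}^3$ carrying the ordered link $K_1\cup c_1'\cup c_2'$ to $K_2\cup c_1'\cup c_2'$; concretely, an isotopy taking $K_1$ to $K_2$ and fixing each curve $c_i'$ setwise.

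First I would localize the difference between the two bands. By hypothesis the bands $b_1$ and $b_2$ coincide outside a small ball $B$ and differ inside $B$ only by the local modification drawn in Figure~\ref{figure:equivalent}. The defining properties of the shrunken annulus place the curves $c_1'\cup c_2'=\partial A'$ on the boundary of $A'$, while the band meets $\operatorname{Int} A$ only inside $A'$ and meets $\partial A$ only at its feet; consequently, after shrinking $A'$ away from the modified region if necessary, the ball $B$ can be chosen disjoint from $c_1'\cup c_2'$. Any isotopy supported in $B$ then fixes $c_1'$ and $c_2'$ pointwise.

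Next I would realize the modification by an isotopy supported in $B$. Inside $B$ the relevant arcs of $K_1$ (the band edges together with the adjacent pieces of $\partial A$) are carried to those of $K_2$ by a single isotopy of arcs rel $\partial B$, obtained by sliding the band across the indicated part of $A$; this is the standard band slide made explicit by the figure. Because this isotopy is supported in $B$ and $B\cap(c_1'\cup c_2')=\emptyset$, it takes $K_1$ to $K_2$ while fixing $c_1'\cup c_2'$ and respecting the labelling of the three components. Hence $K_1\cup c_1'\cup c_2'$ and $K_2\cup c_1'\cup c_2'$ are isotopic as ordered $3$-component links, which is precisely the statement that $(A,b_1)$ and $(A,b_2)$ are equivalent.

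I expect the main obstacle to be the localization claim of the second paragraph: one must verify from Figure~\ref{figure:equivalent} that the depicted difference really can be confined to a ball missing $c_1'\cup c_2'$, i.e. that effecting the change never forces an arc to cross the boundary of the shrunken annulus. Granting this — which is where the precise drawing of $b_1$ and $b_2$, together with the freedom to shrink $A'$ off the modified region, is used — the construction of the supporting isotopy and the verification that the component labelling is preserved are routine, and the lemma follows.
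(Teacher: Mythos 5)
Your proposal reduces the lemma to the claim that the difference between $b_1$ and $b_2$ can be confined to a ball $B$ missing $c_1'\cup c_2'$, and then grants that claim; this is a genuine gap, not a routine verification, and it is in fact the entire content of the lemma. Note first that Definition~\ref{def:equivalent} only asks for an isotopy of ordered links, which is free to move the curves $c_1',c_2'$; your version demands an isotopy fixing them pointwise, which forces in particular $\operatorname{lk}(K_1,c_i')=\operatorname{lk}(K_2,c_i')$, a condition you never check. This condition can genuinely fail for bands that are ``slightly different'' inside a small ball and that present the same knot. For example, let $b_1$ meet $\operatorname{Int}A$ in a single ribbon singularity close to $c_1$, and let $b_2$ instead run over the collar and around the boundary circle $c_1$; the two knots are isotopic (push the singularity off the edge of $A$), and the bands agree outside a small ball. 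But every admissible $c_1'$ for $(A,b_1)$ must separate that ribbon singularity from $c_1$, and counting crossings gives $\operatorname{lk}(K_2,d_1')=\operatorname{lk}(K_1,c_1')\pm 1$ for every admissible choice of curves on either side; since an isotopy of unoriented ordered links preserves $|\operatorname{lk}|$ of corresponding components and $|n|\neq|n\pm1|$ for integers, these two presentations are \emph{not} equivalent. So the lemma is true only for the particular move drawn in Figure~\ref{figure:equivalent}, and exhibiting an isotopy for that particular move is exactly what the paper's proof — which consists precisely of the drawn isotopy of the $3$-component link in Figure~\ref{figure:equivalent2} — supplies and your proposal does not.

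The same example shows that your fallback justification, ``the freedom to shrink $A'$ off the modified region,'' is not available in general: by the definition of the shrunken annulus, $A\setminus(\partial A\cup A')$ must be disjoint from $b(I\times I)$, so $A'$ is forced to contain every ribbon singularity of the band. The curves $c_i'$ can always be pushed toward $\partial A$ (this, rather than the bare fact that the annulus is shared, is the correct justification for your use of one common $A'$: take the collar thin enough to miss both bands), but they can never be pushed past a ribbon singularity away from $\partial A$. Hence whenever the region where $b_1$ and $b_2$ differ occupies the collar of $\partial A$ — as in the move above — no choice of $A'$ makes $B$ disjoint from $c_1'\cup c_2'$, and your argument, applied verbatim, would ``prove'' a false statement. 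To repair the proposal you must engage with the specific move of Figure~\ref{figure:equivalent}: either verify that its modified region really misses some admissible position of $c_1'\cup c_2'$ and then perform your ball-supported slide, or else, as the paper does, exhibit an isotopy of the whole $3$-component link in which the curves $c_i'$ are allowed to move.
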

\begin{figure}[h]
\centering
\includegraphics[scale=0.5]{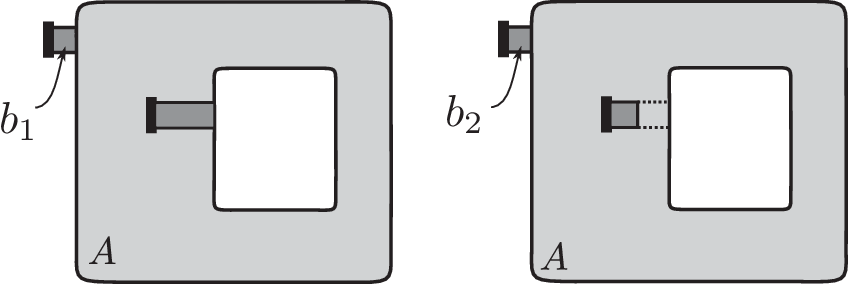}
\caption{$A$ may be knotted and twisted. }
\label{figure:equivalent}
\end{figure}
\begin{proof}
See Figure~\ref{figure:equivalent2}.
\begin{figure}[h]
\centering
\includegraphics[scale=0.55]{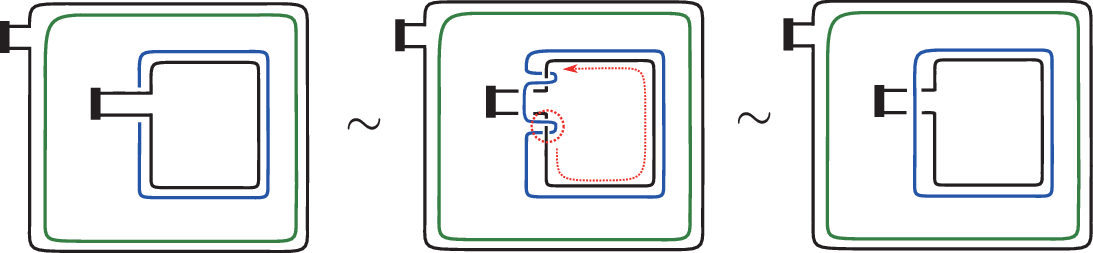}
\caption{(color online) Proof of Lemma~\ref{lem:equivalent} }
\label{figure:equivalent2}
\end{figure}
\end{proof}
\begin{ex}
Two special annulus presentations $(A,b_1)$ and $(A,b_2)$ in Figure~\ref{figure:annulus-pre2} are equivalent.
\end{ex}
We do not know whether  the converse of Theorem~\ref{thm:equivalent} holds or not. 
\begin{question}
Let  $(A_1,b_1)$ and $(A_2,b_2)$ be annulus presentations of $K_1$ and $K_2$, respectively. 
If $A^{n}(K_1)=A^{n}(K_2)$ for any $n\in \mathbf{Z}$, then are $(A_1,b_1)$ and $(A_2,b_2)$ equivalent?
\end{question}
%


\noindent
\textbf{Acknowledgements.}
The final part of this paper was written in OIST.
The first  author thanks Andrew Lobb for inviting him  to 
``Mini-Symposium : Knot Theory on Okinawa'' during 17--21 February 2020.
He also thanks Kazuhiro Ichihara for telling him the paper \cite{CM} and
Chuck Livingston for clarifying some confusing points on orientations in Section \ref{sec:annulus}.
The authors thank the referee for his/her careful reading and helpful comments.
The first  author was supported by the Research Promotion Program for Acquiring Grants in-Aid for Scientific Research (KAKENHI)
in Ritsumeikan University.
The second author was supported by JSPS KAKENHI Grant number JP18K13416. 
\newpage
{\section*{Appendix}}
{We give
a complete proof of Theorem 3.1.}

{\begin{proof}[Proof of Theorem 3.1]
A desired homeomorphism $\psi_{m}\colon M_{K}(m) \to M_{T_{m}(A(K))}(m)$ is given as in Figure~\ref{fig:negative} if $A$ is the negative Hopf band and as in Figure~\ref{fig:positive3} if $A$ is the positive Hopf band.  
\end{proof}
}

\begin{figure}[h!]
\begin{center}
\includegraphics[scale=0.88]{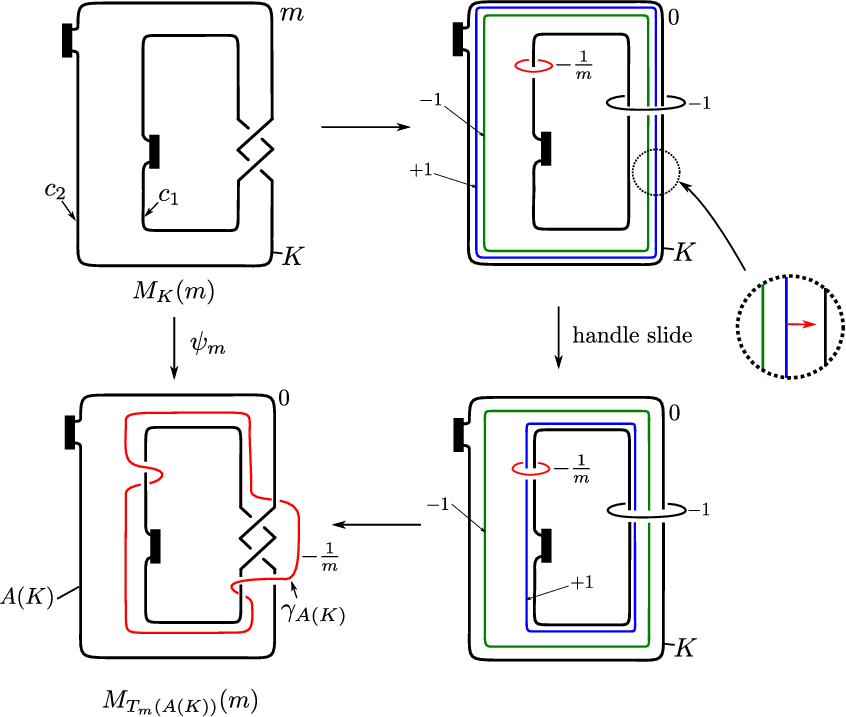}
\caption{(color online) $\psi_{m}\colon M_{K}(m) \to M_{T_{m}(A(K))}(m)$ for the case where A is the negative Hopf
band}
\label{fig:negative}
\end{center}
\end{figure}

{\begin{rem}
Note that $\psi_{m}$ in Figures~\ref{fig:negative} and \ref{fig:positive3} do not depend on the choices of the meridians with slopes $-1/m$ of $K$. 
For example, Figures~\ref{fig:positive3} and \ref{fig:positive2} are essentially the same. 
\end{rem}}

\newpage

\begin{figure}[h!]
\begin{center}
\includegraphics[scale=0.87]{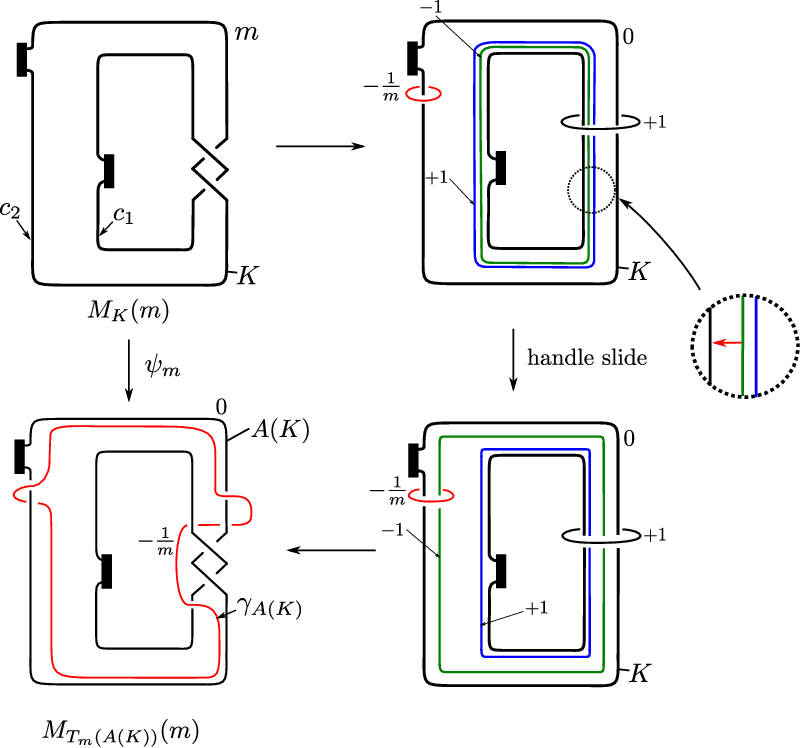}
\caption{(color online) $\psi_{m}$ for the case where $A$ is the positive Hopf
band}
\label{fig:positive3}
\end{center}
\end{figure}

\begin{figure}[h!]
\begin{center}
\includegraphics[scale=0.77]{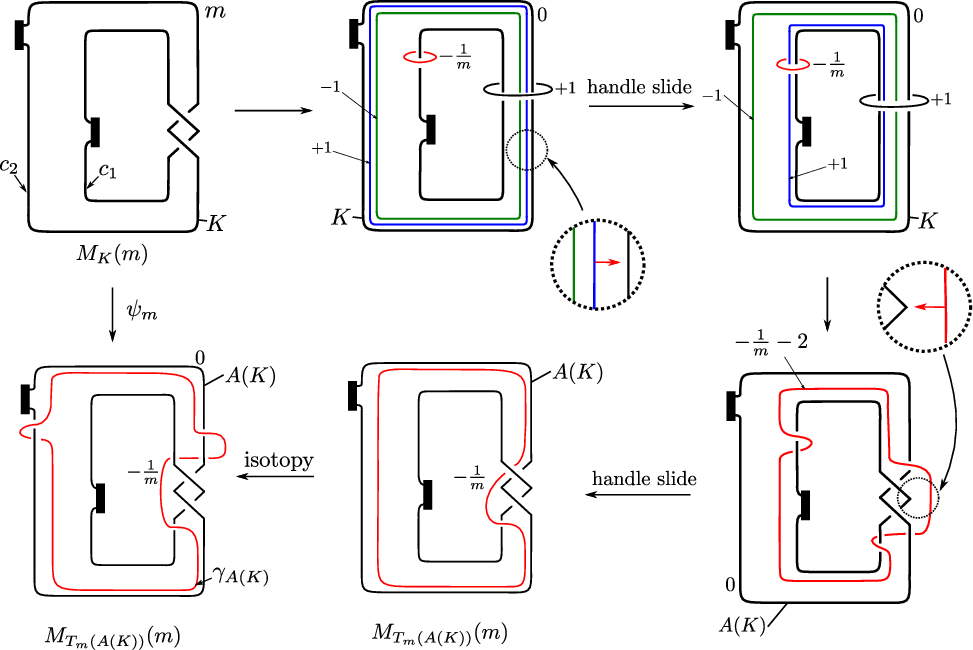}
\caption{(color online)}
\label{fig:positive2}
\end{center}
\end{figure}

\end{document}